\theoremstyle{plain}
\newtheorem{theorem}{Theorem}[section] 
\newtheorem{proposition}[theorem]{Proposition} 
\newtheorem{corollary}[theorem]{Corollary} 
\newtheorem{conjecture}[theorem]{Conjecture} 
\newtheorem{lemma}[theorem]{Lemma} 
\theoremstyle{definition}
\newtheorem{example}[theorem]{Example} 
\newtheorem{remark}[theorem]{Remark}
\newcommand{\hyp}{\operatorname{hyp}}
\newcommand{\nonhyp}{\operatorname{non-hyp}}
\newcommand{\QQ}{\mathcal Q}
\newcommand{\HH}{\mathcal H}
\newcommand{\TQ}{\widetilde{\mathcal Q}}
\newlength{\halfbls}\setlength{\halfbls}{.5\baselineskip}
\title{Filtration and splitting of the Hodge bundle on the non-varying strata of quadratic differentials}
\author{Dawei Chen\thanks{Research of D.C. is supported by the National Science Foundation Grant DMS-2001040 and Simons Travel Support for Mathematicians 635235.}, Fei Yu\thanks{Research of F.Y. is supported by the National Natural Science Foundation of China under Grant No. 11871422 and the Fundamental Research Funds for the Central
 Universities 107101*17221022301(2021FZZX001-01)}}
\begin{document}
\maketitle
\begin{abstract}
We describe the Harder--Narasimhan filtration of the Hodge bundle for Teichm\"uller curves in the non-varying strata of quadratic differentials appearing in \cite{CM2}. Moreover, we show that the Hodge bundle on the non-varying strata away from the irregular components can split as a direct sum of line bundles. As applications, we determine all individual Lyapunov exponents of algebraically primitive Teichm\"uller curves in the non-varying strata and derive new results regarding the asymptotic behavior of Lyapunov exponents.
\end{abstract}

\setcounter{tocdepth}{1}
\tableofcontents

\section{Introduction}

Based on a limited number of computer experiments carried out approximately twenty years ago, Kontsevich and Zorich made an observation that the sum of Lyapunov exponents remains constant for all Teichm\"uller curves in a stratum of Abelian differentials when the sum of the genus and the number of zeros is less than seven. This conjecture was first proved in \cite{CM1} by showing that Teichm\"uller curves in those low genus strata are disjoint from divisors of Brill--Norther type in moduli spaces of curves. Another proof was given in \cite{YZ1} by using filtration of the Hodge bundle. For quadratic differentials, the non-varying property of the hyperelliptic strata was established in \cite[Section 2.3]{EKZ}. Many non-hyperelliptic strata of quadratic differentials in low genus were also shown to be non-varying in \cite{CM2}. These non-varying strata are closely related to affine geometry \cite{Ch1, Ch2}. As for strata with varying sums of Lyapunov exponents, various results on bounding those varying sums can be found in \cite{CM1, CM2, YZ2, Fo}.

The Harder--Narasimhan filtration of the Hodge bundle of Teichm\"uller curves have been constructed in the non-varying strata of Abelian differentials as well as in the hyperelliptic loci of all Abelian strata \cite{YZ1}. The (normalized) Harder--Narasimhan polygon of the filtration can be described by using associated Weierstrass exponents $w_i$ \cite[Definition 3.7]{YZ1}. Interesting applications of the filtration have also appeared such as in \cite{Au, BHM}. 

Given a stratum of quadratic differentials, via the canonical double cover it can lift into the corresponding stratum of Abelian differentials, where the Hodge bundle over the lifted image consists of the invariant part and the anti-invariant part with respect to the involution of the canonical double cover. In this paper, we study the splitting of the Hodge bundle on the non-varying strata of quadratic differentials listed in \cite{CM2}. Moreover, we determine the Harder--Narasimhan filtration of the invariant and anti-invariant parts of the Hodge bundle of Teichm\"uller curves in these non-varying strata and evaluate the associated Weierstrass exponents $w_i^+, w_i^-$. Since different connected components of a stratum can have distinct non-varying properties, when speaking of a non-varying stratum we mean a non-varying component of the stratum. Our main result is stated as follows.

\begin{theorem}\label{main}
\begin{itemize}
\item[(1)] 
Let $\QQ_g(d_1,\ldots, d_n)$ be a non-varying stratum of quadratic differentials listed in \cite{CM2} that are not hyperelliptic or irregular. Then the Hodge bundle on $\QQ_g(d_1,\ldots, d_n)$ is a direct sum of line bundles. Moreover, for any Teichm\"uller curve $C$ in these strata the Weierstrass exponents of the Harder--Narasimhan filtration of the Hodge bundle on $C$ are given by $w_i^+$ for $i=1,\ldots,g$ being the $(g-i+1)$-st smallest number in the set 
$$\left\{\frac{2k}{d_j+2}\mid 0< 2k\leq d_j+1,\ j=1,\ldots,n\right\},$$
$w_1=w_1^-=1$, and  $w_i^-$ for $i=2,\ldots,g_{\rm eff}$ being the $(g_{\rm eff}-i+1)$-st smallest number in the set 
$$\left\{1-\frac{2k}{d_j+2}\mid 0<2k\leq d_j+1,\ j=1,\ldots,n\right\}.$$
\item[(2)] The values of $w_i^+$ and $w_i^-$ for the Hodge bundle on Teichm\"uller curves in the non-varying irregular strata in \cite{CM2} are given by Table \ref{irr}. 
\item[(3)] The Hodge bundle splits as a direct sum of line bundles on the hyperelliptic strata of quadratic differentials.  
Moreover, the values of $w_i^+$ and $w_i^-$ for the Hodge bundle on Teichm\"uller curves in these hyperelliptic strata are given by Theorem \ref{thm:hyp-strata}.
\end{itemize}
\end{theorem}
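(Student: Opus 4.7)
The plan is to transfer the problem to Abelian differentials via the canonical double cover and then adapt the filtration techniques of \cite{YZ1}. For a stratum $\QQ_g(d_1,\ldots,d_n)$ of quadratic differentials, the canonical cover $\pi\co \widetilde X\to X$ branched over the odd-order zeros of $q$ produces an Abelian differential $\omega$ on $\widetilde X$ with $\omega^2=\pi^*q$; the involution of $\pi$ decomposes the Hodge bundle on $\widetilde X$ as $\WH=\HH^+\oplus\HH^-$, where $\HH^+$ recovers the Hodge bundle of $X$ and $\HH^-$ is the Prym piece. Over a Teichm\"uller curve $C\subset\QQ_g(d_1,\ldots,d_n)$, the strategy is to analyze the Harder--Narasimhan filtrations of $\HH^+$ and $\HH^-$ separately and then combine them.

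For Part (1), the first step is to construct, for each zero $p_j$ of order $d_j$ and each admissible integer $k$, a sub-line-bundle of $\HH^+$ or $\HH^-$ whose normalized degree equals the predicted Weierstrass exponent. Locally near $p_j$ one has $q=z^{d_j}(dz)^2$, and on the double cover the Abelian differential $\omega$ vanishes to order $d_j+1$ when $d_j$ is odd and to order $d_j/2$ at each of the two preimages when $d_j$ is even. The fractions $2k/(d_j+2)$ and $1-2k/(d_j+2)$ are then exactly the eigenvalues through which local monodromy around a cusp of $\overline C$ corresponding to a boundary degeneration at $p_j$ acts on flat sections of $\HH^+$ and $\HH^-$ respectively. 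A degree computation shows that the total slope of the constructed filtration equals the sum of Lyapunov exponents of the non-varying component, which is known from \cite{CM2} and which matches the total slope of the Hodge bundle by the Eskin--Kontsevich--Zorich formula. Since the constructed filtration realizes the largest slopes possible, it must coincide with the HN filtration, and the splitting of $\WH$ as a direct sum of line bundles then follows by the same mechanism as in the Abelian case of \cite{YZ1}.

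For Part (2), the irregular non-varying components exist only for a short explicit list of low-genus strata, which can be described by cyclic covers of $\mathbb P^1$; running the same construction on these specific models yields the entries of Table \ref{irr} after accounting for the additional symmetry that distinguishes the irregular component from its ambient stratum. For Part (3), the hyperelliptic strata carry an additional commuting involution, which refines the decomposition $\WH=\HH^+\oplus\HH^-$ into a finer eigenspace decomposition and automatically produces the line-bundle splitting, with the explicit values of $w_i^\pm$ computed in Theorem \ref{thm:hyp-strata} from the Weierstrass-point structure on the underlying $\mathbb P^1$ quotient.

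The main obstacle is verifying that the sub-line-bundles constructed in the second paragraph are in fact the graded pieces of the HN filtration rather than merely a filtration with the correct total degree. Doing so requires matching the local exponents of each constructed sub-bundle at every cusp of $\overline C$ with the local monodromy data of $q$ at each zero, and it is precisely where the non-varying hypothesis becomes essential, closing the degree inequality that separates a generic filtration from the HN one. A secondary difficulty is the careful bookkeeping in Part (2), where the extra symmetry defining the irregular components modifies the naive count and forces a case-by-case verification against the table.
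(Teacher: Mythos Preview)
Your proposal takes a different route from the paper, and there are real gaps.

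The paper does not argue via monodromy eigenvalues at cusps or via a degree-matching argument against the non-varying sum of Lyapunov exponents. Instead, the key input for Part~(1) is the purely geometric condition $h^0\big(\sum a_j p_j\big)=1$ for suitable integers $0\le a_j\le k_j=[(d_j+1)/2]$ with $\sum a_j=g$, verified in \cite{CM2} for each of these strata (including boundary points of $\TQ$). Given this, two short direct-image lemmas (Lemmas~\ref{sp1} and~\ref{sp2}) show directly that $f_*\omega_{S/C}$ and $f_*\omega_{S/C}(D)$ split as explicit direct sums of the line bundles $f_*\mathcal{O}_{D_j}(\omega_{S/C}-iD_j)$ and $\mathcal{L}\otimes f_*\mathcal{O}_{D_j}(iD_j)$ (Proposition~\ref{prop:split-teich}), and the Weierstrass exponents are then read off from $D_j^2=-\chi/(d_j+2)$. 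The non-varying sum itself is never invoked as an input.

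Your argument has two concrete gaps. First, even granting a filtration whose total degree matches the Lyapunov sum, this alone does not identify it as the Harder--Narasimhan filtration; you would need the graded quotients to have strictly decreasing slopes, and you have not explained how that follows from local monodromy data. Second, and more seriously, the splitting into a direct sum of line bundles does not follow from knowing the HN filtration. The construction in \cite{YZ1} produces a filtration, not a splitting; the splitting is new here (see also Theorem~\ref{thm:abelian-splitting} for the Abelian case) and comes from the retraction structure in Lemma~\ref{sp1} applied to the thickened sections $a_jD_j$, which in turn requires the $h^0$ hypothesis above. Your proposal offers no mechanism for this, and ``the same mechanism as in \cite{YZ1}'' does not supply one.

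For Part~(2), the irregular components are not described by cyclic covers of $\mathbb{P}^1$; they are distinguished by an exceptional Weierstrass gap pattern at a zero (for instance $h^0(3p_1)=2$ rather than $1$ in $\mathcal{Q}(9,-1)^{\mathrm{irr}}$), and the paper handles them by writing down the filtration directly from these $h^0$ values. For Part~(3), the paper does exploit the hyperelliptic double cover as you suggest, but again the splitting is obtained by applying the direct-image lemmas on the $\mathbb{P}^1$-fibration $S^{\mathrm{o}}\to\TQ$, not from an abstract eigenspace decomposition.
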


As explained in \cite[Section 6]{YZ2} and \cite[Section 4]{BHM}, for algebraically primitive Teichm\"uller curves, the individual Lyapunov exponents and Weierstrass exponents coincide with each other (and in general those Lyapunov exponents in the part of real multiplication come from Weierstrass exponents). We thus conclude the following result. 

\begin{corollary}
The individual Lyapunov exponents of any algebraically primitive Teichm\"uller curve in the non-varying strata of quadratic differentials are given by the corresponding Weierstrass exponents in Theorem~\ref{main}. 
\end{corollary}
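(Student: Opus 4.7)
The plan is to reduce the statement entirely to Theorem~\ref{main} together with the identification, recalled just above the corollary, of individual Lyapunov exponents with Weierstrass exponents for algebraically primitive Teichm\"uller curves. Since Theorem~\ref{main} already computes all of the Weierstrass exponents $w_i^+$ and $w_i^-$ on each of the non-varying strata of quadratic differentials, the corollary should follow by invoking this identification on both the invariant and anti-invariant parts of the Hodge bundle of the canonical double cover.

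More concretely, the first step is to recall from \cite[Section 6]{YZ2} and \cite[Section 4]{BHM} the precise form of the identification. Algebraic primitivity forces the trace field of the affine group to have degree equal to the rank of the relevant piece of the Hodge bundle, so that real multiplication decomposes the local system into Galois-conjugate rank-two pieces; on each such piece the nonnegative Lyapunov exponent equals the normalized degree of the maximal Higgs line subbundle, which by definition is the Weierstrass exponent attached to that graded piece of the Harder--Narasimhan filtration. Because the involution of the canonical double cover commutes with the Kontsevich--Zorich cocycle and with real multiplication, the $\pm$ splitting is preserved by the rank-two decomposition, and the identification may then be applied independently on the two summands to produce the Lyapunov exponents corresponding to $w_i^+$ and $w_i^-$ respectively.

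The second step is a direct substitution of the values of $w_i^\pm$ furnished by Theorem~\ref{main}: the sorted lists in Part~(1) for the non-hyperelliptic, non-irregular strata, the entries of Table~\ref{irr} for the irregular strata via Part~(2), and Theorem~\ref{thm:hyp-strata} for the hyperelliptic strata via Part~(3). I expect the only mild obstacle to be the careful verification that the $\pm$ splitting interacts cleanly with the Weierstrass--Lyapunov identification, so that no exponent is double-counted or omitted when the two lists $\{w_i^+\}$ and $\{w_i^-\}$ are assembled into the full spectrum; this is essentially automatic from the equivariance of the constructions involved, but should be stated explicitly before concluding.
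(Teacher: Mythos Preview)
Your proposal is correct and follows essentially the same approach as the paper: the paper's entire argument is the sentence preceding the corollary, which cites \cite[Section 6]{YZ2} and \cite[Section 4]{BHM} for the coincidence of individual Lyapunov exponents with Weierstrass exponents on algebraically primitive Teichm\"uller curves, and then states ``We thus conclude the following result.'' Your write-up simply unpacks this citation with more detail about real multiplication and the compatibility with the $\pm$ splitting, which is fine but not required.
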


In the course of the proof of Theorem~\ref{main} we will actually show the splitting 
of the Hodge bundle on a partial compactification of the strata by adding boundary points of type similar to the cusps of Teichm\"uller curves (see Section~\ref{sec:quad} for more details). Likewise all of our results on the Harder--Narasimhan filtration stated for Teichm\"uller curves hold for any compact subvariety contained in the partial compactification. 

As an application of our method we will also show in Theorem~\ref{thm:abelian-splitting} that the Hodge bundle splits as a direct sum of line bundles on any non-varying stratum of Abelian differentials in \cite{CM1} that is not a spin component of even parity. 

Another application of our results is about the asymptotic behavior of Lyapunov exponents. Kontsevich and Zorich \cite{KZ} conjectured that the large genus limit of the second largest Lyapunov exponent satisfies that 
$$\lim_{g\rightarrow \infty}\lambda_2=1$$
for the hyperelliptic strata of Abelian differentials $\mathcal{H}_g(2g-2)^{\hyp}$ and $\mathcal{H}_g(g-1,g-1)^{\hyp}$, while for all other strata they conjectured that 
$$\lim_{g\to\infty}\lambda_2=\frac{1}{2}.$$
The hyperelliptic case of the conjecture has been settled by using the relation between the Harder--Narasimhan polygon and the Lyapunov polygon \cite{Yu, EKMZ}. Here we can generalize this case as follows, where $\mathcal{Q}_0(n,-1^{n+4})$ below corresponds to the hyperelliptic strata  $\mathcal{H}_g(2g-2)^{\rm hyp}$ (for odd $n$) and $\mathcal{H}_g(g-1,g-1)^{\rm hyp}$ (for even $n$) via the canonical double cover. 

\begin{corollary}
For the strata of quadratic differentials $\mathcal{Q}_0(d_1,\ldots,d_k)$ (with $n=\max(d_1,\cdots,d_k)$), $\mathcal{Q}_1(n,-1^n)$, and $\mathcal{Q}_1(n,1,-1^{n+1})$, we have 
$$\lim_{n\to\infty}\lambda_m^-=1$$
for any fixed $m$. Moreover for the hyperelliptic strata $\mathcal{Q}(2(g-k)-3,2(g-k)-3,2k+1,2k+1)^{\rm hyp}$, $\mathcal{Q}(2(g-k)-3,2(g-k)-3,4k+2)^{\rm hyp}$, and $\mathcal{Q}(4(g-k)-6,4k+2)^{\rm hyp}$, we have
$$\lim_{k\to\infty}\lambda_m^+=\lim_{g\to\infty}\lambda_m^+=\lim_{k\to\infty}\lambda_m^-=\lim_{g\to\infty}\lambda_m^-=1.$$
\end{corollary}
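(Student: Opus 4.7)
The plan is to combine Theorem \ref{main} with the standard domination of the Harder--Narasimhan polygon by the Lyapunov polygon from \cite{Yu, EKMZ}. Given a piece of the Hodge bundle (invariant or anti-invariant), if $w_1\geq w_2\geq\cdots$ are the associated Weierstrass exponents arranged in decreasing order and $\lambda_1\geq\lambda_2\geq\cdots$ are the corresponding non-negative Lyapunov exponents, then
$$\sum_{i=1}^m \lambda_i\ \geq\ \sum_{i=1}^m w_i\qquad\text{and}\qquad\lambda_i\leq 1\text{ for every }i.$$
Hence if the top $m$ Weierstrass exponents each tend to $1$, a squeeze forces $\lambda_1,\ldots,\lambda_m$ to converge to $1$. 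The whole proof therefore reduces to an asymptotic analysis of the explicit data supplied by Theorem \ref{main}.

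For $\QQ_0(d_1,\ldots,d_k)$, $\QQ_1(n,-1^n)$ and $\QQ_1(n,1,-1^{n+1})$, the strata are non-hyperelliptic and non-irregular, so part (1) of Theorem \ref{main} applies and the anti-invariant Weierstrass exponents form the set
$$S^-=\left\{1-\frac{2k}{d_j+2}\ :\ 0<2k\leq d_j+1\right\}.$$
Singling out the entry $d_j=n$ of maximal order, for every $n\geq 2m-1$ the set $S^-$ contains the $m$ distinct values $1-\tfrac{2k}{n+2}$ for $k=1,\ldots,m$, each of which is at least $1-\tfrac{2m}{n+2}$. Consequently the $m$-th largest element of $S^-$ is $\geq 1-\tfrac{2m}{n+2}$, which forces $w_m^-\to 1$ as $n\to\infty$, and then the polygon comparison yields $\lambda_m^-\to 1$.

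For the three hyperelliptic families the argument is formally identical, but the explicit values of $w_i^{\pm}$ must be read off from Theorem \ref{thm:hyp-strata} via part (3) of Theorem \ref{main}. In each case, as $k$ or $g$ tends to infinity, the zeros of the quadratic differential acquire unbounded orders, so the explicit formulas produce, for any fixed $m$, at least $m$ Weierstrass exponents of the form $1-O(1/g)$ on \emph{both} the invariant and the anti-invariant side. Taking the $m$-th largest in each piece and applying the polygon inequality separately to the $+$ and $-$ parts gives the four stated limits.

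The only real work is bookkeeping: one must verify that in each listed stratum the top $m$ Weierstrass exponents genuinely come from the high-order zeros (rather than from small-order singularities whose contribution stays bounded away from $1$), and that Theorem \ref{thm:hyp-strata} supplies enough large exponents on both the $+$ and $-$ sides of each hyperelliptic family. The conceptual ingredient—the Lyapunov versus Harder--Narasimhan polygon comparison—is provided by \cite{Yu, EKMZ}, while all numerical data is furnished by Theorem \ref{main}.
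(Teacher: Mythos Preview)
Your overall strategy matches the paper's: compute the Weierstrass exponents from Theorem~\ref{main}, feed them into the polygon inequality \eqref{lw}, and squeeze. However, you are missing one genuine step that the paper supplies explicitly.

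The Weierstrass exponents $w_i^{\pm}$ and the polygon comparison $\sum_{i=1}^m\lambda_i^{\pm}\geq \sum_{i=1}^m w_i^{\pm}$ from \cite{Yu, EKMZ} are formulated for \emph{Teichm\"uller curves}, not for the strata themselves. The Lyapunov exponents in the corollary are those of the strata (with respect to the Masur--Veech measure), as the surrounding discussion of the Kontsevich--Zorich conjecture makes clear. Your argument therefore only shows that for every Teichm\"uller curve $C$ in the given stratum one has $\lambda_m^{\pm}(C)\geq 1-\varepsilon_n$ with $\varepsilon_n\to 0$. To pass from this uniform bound over all Teichm\"uller curves to the Lyapunov exponents of the stratum, the paper invokes the density of Teichm\"uller curves together with the continuity result of \cite{BEW}; see the proof of Corollary~\ref{as}, where this is spelled out. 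Without this step the conclusion about the strata does not follow, since Lyapunov exponents are not a priori continuous under weak-$*$ limits of invariant measures. You should insert this density/continuity argument (citing \cite{BEW}) between your squeeze on Teichm\"uller curves and the final conclusion.

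Apart from this, your bookkeeping is essentially the same as the paper's. One small wording issue: the genus-zero strata are handled in the paper under Proposition~\ref{hy}, but as you say they do fall under part~(1) of Theorem~\ref{main}; the anti-invariant set $S^-$ you write down is correct, and the contributions from the simple poles are indeed vacuous since the condition $0<2k\leq d_j+1$ has no solutions when $d_j=-1$.
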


We will also investigate upper bounds of $w_i^+$ and $w_i^-$ for all Teichm\"uller curves in Section~\ref{sec:bounds}. Additionally, we will show in Corollary \ref{no} that all Lyapunov exponents of any Teichm\"uller curve in those non-varying strata of quadratic differentials are nonzero.

We end the introduction by looking into some future directions. We expect that many results stated for Teichm\"uller curves and for the partial compactification of the strata should still hold for higher-dimensional affine invariant subvarieties and for a complete compactification of the strata. Moreover, due to the algebraic nature of filtration of the Hodge bundle, we hope that our results and techniques can shed light on the problem of computing sums of Lyapunov exponents via intersection numbers (see \cite[Conjecture 4.3]{CMS}). Finally in a general context beyond Teichm\"uller dynamics, one can work with the Hodge bundle on the strata of moduli spaces of pointed curves where the stratification is induced by the hierarchy of Weierstrass semigroups at the marked points. We plan to study these questions in future work. 

\section*{Acknowledgements} We thank Martin M\"oller and Anton Zorich for related discussions over the years. We also thank Giovanni Forni, Shuai Guo, Zhe Sun, Wanyuan Xu and Kang Zuo for helpful communications.  

\section{Filtration and splitting of direct images} 
\label{sec:filtration}
In this section we will establish some general results about filtration and splitting of direct image vector bundles on families of curves. Later on we will apply these results to study the Harder--Narasimhan filtration and splitting of the Hodge bundle on strata of quadratic differentials. 

We first recall the definition of the Harder--Narasimhan filtration (see e.g., \cite[Section 1.3]{HL} for more details). Let $B$ be an $n$-dimensional compact variety with a polarization given by an ample divisor class $A$. For a vector bundle $V$ of rank $r$ on $B$, define the degree and slope of $V$ respectively as 
$$\deg V = c_1(V)\cdot A^{n-1}\quad {\rm and}\quad \mu (V) = \frac{\deg V}{r}.$$
A Harder--Narasimhan filtration ${\rm HN}(V)$ for $V$ is an increasing filtration 
$$0 = {\rm HN}_0(V) \subset {\rm HN}_1(V) \subset \cdots \subset {\rm HN}_k(V) = V$$ 
such that the graded quotients 
$${\rm gr}_i^{{\rm HN}(V)} = {\rm HN}_i(V) / {\rm HN}_{i-1}(V)  $$ 
are semistable vector bundles for $i=1, \ldots, k$ and that 
$$ \mu({\rm gr}_1^{{\rm HN}(V)}) > \mu({\rm gr}_2^{{\rm HN}(V)}) > \cdots > \mu({\rm gr}_k^{{\rm HN}(V)}). $$
The Harder--Narasimhan filtration of $V$ exists uniquely with respect to the polarization $A$. Moreover if $B$ is an algebraic curve, then the Harder--Narasimhan filtration exists uniquely regardless of the polarization. 

Let $r_i$ be the rank of ${\rm gr}_i^{{\rm HN}(V)}$ where $r = r_1+\cdots +r_k$. Consider the sequence 
\begin{eqnarray}
\label{eq:mu_i}
\mu_1\geq \mu_2 \geq \cdots \geq \mu_r 
\end{eqnarray}
where each $\mu({\rm gr}_i^{{\rm HN}(V)})$ appears $r_i$ times in the decreasing order. The Harder--Narasimhan polygon of $V$ is defined as the convex hull in $\mathbb R^2$ spanned by the points 
$$ (0,0), (1,\mu_1), (2,\mu_2), \ldots, (r, \mu_1+\cdots +\mu_r). $$
Later on we will use a normalized version of the Harder--Narasimhan polygon of the Hodge bundle on a Teichm\"uller curve $C$, where the normalization mods out one-half of the orbifold Euler characteristic of $C$ (see e.g., \cite[Section 4.1]{CM2}). 

Define the direct sum of graded quotients of the Harder--Narasimhan filtration of $V$ as 
$${\rm grad}({\rm HN}(V)) = \bigoplus_{i=1}^k {\rm gr}_i^{{\rm HN}(V)}. $$
If $V = V_1 \oplus \cdots \oplus V_m$ is a direct sum of vector bundles, then we have 
$${\rm grad}({\rm HN}(V)) = {\rm grad}({\rm HN}(V_1)) \oplus \cdots \oplus {\rm grad}({\rm HN}(V_m))$$
and every $\mu({\rm gr}_i^{{\rm HN}(V_j)})$ equals $\mu({\rm gr}_l^{{\rm HN}(V)})$ for some $l$. 

In what follows we focus on the case of direct image vector bundles on a family of curves.  

\begin{lemma}\label{sp1} 
Let $f\colon S\to B$ be a flat family of curves with a section $E$. Then for $a\geq 1$ and any divisor class $M$ in $S$, we have  
$$f_*\mathcal{O}_{aE}(M)  
=\bigoplus^{a-1}_{i=0} f_*\mathcal{O}_{E}(M-iE). $$
Moreover, suppose $B$ is compact $n$-dimensional with an ample divisor class $A$ such that $E^2 \cdot (f^{*}A)^{n-1} < 0$ in $S$. Then the Harder--Narasimhan filtration of $f_*\mathcal{O}_{aE}(M)$ with respect to $A$ is $$0\subset f_*\mathcal{O}_{E}(M-(a-1)E)\subset\cdots\subset f_*\mathcal{O}_{(a-1)E}(M-E)\subset f_*\mathcal{O}_{aE}(M).$$
\end{lemma}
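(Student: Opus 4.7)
My approach is induction on $a$, using the short exact sequence coming from the filtration of $\mathcal{O}_{aE}$ by powers of the ideal sheaf $\mathcal{I}_E = \mathcal{O}_S(-E)$. Specifically, for each $k\geq 1$ there is a short exact sequence on $S$
\[
0 \to \mathcal{O}_{kE}(-E) \to \mathcal{O}_{(k+1)E} \to \mathcal{O}_E \to 0,
\]
whose twist by $\mathcal{O}_S(M-(a-k-1)E)$ exhibits $\mathcal{O}_{kE}(M-(a-k)E)$ as a subsheaf of $\mathcal{O}_{(k+1)E}(M-(a-k-1)E)$ with quotient the line bundle $\mathcal{O}_E(M-(a-k-1)E)$. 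Because $E$ is a section of $f$, any thickening $mE$ is a proper infinitesimal thickening of $E\cong B$ with zero-dimensional fibers, hence finite over $B$, so $R^1 f_*$ vanishes on any sheaf supported on $mE$. Applying $f_*$ at each step therefore preserves exactness and yields the increasing chain
\[
0 \subset f_*\mathcal{O}_E(M-(a-1)E) \subset f_*\mathcal{O}_{2E}(M-(a-2)E) \subset \cdots \subset f_*\mathcal{O}_{aE}(M)
\]
with successive quotients the line bundles $f_*\mathcal{O}_E(M-iE)$ for $i=a-1,a-2,\ldots,0$. Iterated splitness of these short exact sequences then gives the direct sum decomposition in the first part of the lemma.

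\textbf{The Harder--Narasimhan filtration.} For the second part, I would compute the slopes of these graded line bundles. Using the isomorphism $f|_E\colon E\xrightarrow{\sim} B$ and the projection formula,
\[
\deg f_*\mathcal{O}_E(M-iE) = (M-iE)\cdot E \cdot (f^*A)^{n-1} = M\cdot E \cdot (f^*A)^{n-1} - i \cdot E^2 \cdot (f^*A)^{n-1}.
\]
Under the hypothesis $E^2 \cdot (f^*A)^{n-1} < 0$, this is strictly increasing in $i$. The graded quotient at level $k$ in the chain above corresponds to $i = a-k$, so the slopes of the graded pieces are strictly decreasing from bottom to top. Since each graded piece is a line bundle (hence trivially semistable), the defining properties of the Harder--Narasimhan filtration are satisfied, and by the uniqueness of HN the stated chain must be the HN filtration with respect to $A$.

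\textbf{Main obstacle.} The main subtle point is justifying the direct sum splitting in the first part, as opposed to merely a filtration with the stated graded pieces. One must verify that each successive extension class in $\operatorname{Ext}^1_{\mathcal{O}_B}$ between the relevant line bundles vanishes, which does not follow purely formally. A plausible route is to use that sheaves supported on $aE$ are determined by data on the formal completion of $S$ along $E$, together with a careful choice of local generators of $\mathcal{I}_E$ globalizing to compatible direct-sum splittings after pushforward; the statement holds with no additional hypothesis, so the splitting must be intrinsic to the section structure. The Harder--Narasimhan filtration claim, by contrast, depends only on the slope ordering of the graded pieces and requires no additional splitness argument.
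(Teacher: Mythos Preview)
Your approach is exactly the paper's: the same short exact sequences coming from the ideal filtration of $\mathcal{O}_{aE}$, the same induction, and an identical argument for the Harder--Narasimhan claim via the strictly ordered degrees of the graded line bundles. You have also correctly isolated the one genuine issue, namely the global splitting of
\[
0 \longrightarrow f_*\mathcal{O}_{(a-1)E}(M-E) \longrightarrow f_*\mathcal{O}_{aE}(M) \longrightarrow f_*\mathcal{O}_E(M) \longrightarrow 0,
\]
which you leave as an acknowledged gap.

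The observation you are missing is short and uses only the hypothesis that $E$ is a \emph{section} of $f$. Because of this, the restriction $f|_{aE}\colon aE \to B$ is a retraction of the closed immersion $B \cong E \hookrightarrow aE$. On pushforwards this says precisely that the surjection of $\mathcal{O}_B$-algebras $f_*\mathcal{O}_{aE} \to f_*\mathcal{O}_E \cong \mathcal{O}_B$ is split by the structure map $\mathcal{O}_B \to f_*\mathcal{O}_{aE}$, i.e.\ the identity of $f_*\mathcal{O}_E$ lifts to the identity of $f_*\mathcal{O}_{aE}$. The paper then invokes this retraction (together with the references to Lemma~A.3 of Chen--M\"oller and Proposition~1.1 of Bayer--Eisenbud) to split the twisted sequence above for arbitrary $M$, and the inductive step follows. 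Your suggestion of passing to the formal completion and choosing compatible local generators of $\mathcal{I}_E$ is aiming at the same phenomenon, but the retraction furnishes the splitting directly without any local-to-global patching.
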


\begin{proof}
The scheme structure of $aE$ is characterized by 
$$ 0\to \mathcal O_S(-aE) \to \mathcal O_S \to \mathcal O_{aE} \to 0.$$
Note that $f$ induces a retraction from the subscheme $aE$ to $B$, where the restriction of $f$ to the reduced support $ E\to B$ is an isomorphism. As in \cite[Lemma A.3]{CM2} and \cite[Proposition 1.1]{BE} we have the exact sequence 
$$ 0\to f_{*}\mathcal O_{(a-1)E}(M-E) \to f_{*}\mathcal O_{aE}(M) \to f_{*}\mathcal O_{E}(M)\to 0 $$ 
which indeed splits because the identity element of $f_{*}\mathcal O_{aE}$ lifts the identity element of $f_{*}\mathcal O_{E}$ as induced by the retraction. Therefore, 
$$f_{*}\mathcal O_{aE}(M) = f_{*}\mathcal O_{E}(M)\oplus f_{*}\mathcal O_{(a-1)E}(M-E) $$
and the first claim thus follows by induction. 

For the other claim, since $E^2\cdot (f^{*}A)^{n-1}<0$, we have 
$$(M-(a-i)E)\cdot E\cdot (f^{*}A)^{n-1}<(M-(a-i+1)E)\cdot E\cdot (f^{*}A)^{n-1}.$$ 
We thus obtain the desired filtration with the property that the degrees of the graded quotient bundles are strictly decreasing. The claim now follows from the uniqueness of the Harder--Narasimhan filtration. 
\end{proof}

\begin{lemma}\label{sp2} Let $M$ be a divisor class in $S$ and $a_i \geq 0$. If $h^0(M|_F)$ and $h^0((M-\sum a_iD_i)|_F)$ are constant for every fiber $F$, and if moreover $h^0(M|_F)=h^0\big((M-\sum a_iD_i)|_F\big)+\sum a_i$, then 
$$f_*\mathcal{O}_S(M)/f_*\mathcal{O}_S\left(M-\sum a_iD_i\right)=\bigoplus f_*\mathcal{O}_{a_iD_i}(M) = \bigoplus_i \bigoplus_{j=0}^{a_i-1} f_{*}\mathcal O_{D_i}(M - jD_i).$$
\end{lemma}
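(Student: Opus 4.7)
The plan is to produce the quotient as the pushforward of an explicit ideal-sheaf sequence on $S$ and then split each direct summand by invoking Lemma~\ref{sp1}.

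First I would write down on $S$ the short exact sequence
$$0 \to \mathcal O_S\Bigl(M-\sum a_iD_i\Bigr) \to \mathcal O_S(M) \to \mathcal O_{\sum a_iD_i}(M) \to 0.$$
In the setup of Lemma~\ref{sp1} the $D_i$ are disjoint sections of $f$, so $\sum a_iD_i$ is the scheme-theoretic disjoint union of the $a_iD_i$, and hence
$$\mathcal O_{\sum a_iD_i}(M) \;=\; \bigoplus_i \mathcal O_{a_iD_i}(M).$$

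Next I would push the sequence forward by $f$. Left-exactness of $f_*$ is automatic, so the content is surjectivity of $f_*\mathcal O_S(M)\to f_*\mathcal O_{\sum a_iD_i}(M)$. To check this I would restrict to a fiber $F$: the intersection $\sum a_iD_i\cap F$ has length $\sum a_i$, so $h^0(\mathcal O_{\sum a_iD_i}(M)|_F)=\sum a_i$, and the hypothesis
$$h^0(M|_F) \;=\; h^0\bigl((M-\textstyle\sum a_iD_i)|_F\bigr) + \sum a_i$$
is exactly what is needed to force the connecting map in the fiberwise cohomology sequence into $H^1((M-\sum a_iD_i)|_F)$ to vanish. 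This gives fiberwise surjectivity on $H^0$. Combined with the assumed constancy of $h^0(M|_F)$ and $h^0((M-\sum a_iD_i)|_F)$ (and hence of the corresponding $h^1$'s, since $\chi$ is locally constant on the flat family of curves), cohomology and base change (Grauert) then implies that $f_*\mathcal O_S(M)$ and $f_*\mathcal O_S(M-\sum a_iD_i)$ are locally free, that their formation commutes with base change, and therefore by Nakayama that the fiberwise surjection promotes to a surjection of sheaves on $B$. This identifies $f_*\mathcal O_S(M)/f_*\mathcal O_S(M-\sum a_iD_i)$ with $\bigoplus_i f_*\mathcal O_{a_iD_i}(M)$, which is the first equality.

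Finally I would apply Lemma~\ref{sp1} to each summand to get
$$f_*\mathcal O_{a_iD_i}(M) \;=\; \bigoplus_{j=0}^{a_i-1} f_*\mathcal O_{D_i}(M-jD_i),$$
and reassembling the two direct sums yields the second equality. The main obstacle is the surjectivity step: the numerical hypothesis is calibrated precisely to kill the coboundary into $R^1 f_*\mathcal O_S(M-\sum a_iD_i)$, but one has to be careful to invoke cohomology and base change to pass from the fiberwise statement to a genuine surjection of coherent sheaves on $B$, since otherwise the pushforwards could jump in rank and the pushed-forward sequence need not be exact on the right.
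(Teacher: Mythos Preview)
Your argument is correct and follows the same route as the paper: write the ideal-sheaf sequence, push forward, show the connecting map into $R^1f_*\mathcal O_S(M-\sum a_iD_i)$ vanishes, and then invoke Lemma~\ref{sp1}. The only cosmetic difference is in how surjectivity is justified: the paper argues globally by observing that all terms in the long exact sequence are vector bundles and that Riemann--Roch forces $\operatorname{rank} R^1f_*\mathcal O_S(M-\sum a_iD_i)=\operatorname{rank} R^1f_*\mathcal O_S(M)$, hence $\delta$ has rank zero image inside a locally free sheaf and so vanishes; you instead check surjectivity fiberwise and then invoke Grauert and Nakayama. These are two phrasings of the same computation, and your version is arguably the more careful one about base change.
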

\begin{proof} Since $D_i$ and $D_j$ are disjoint for $i\neq j$, we have the exact sequence
$$0\rightarrow \mathcal{O}_S\left(M-\sum a_iD_i\right)\rightarrow \mathcal{O}_S(M)\rightarrow\bigoplus \mathcal{O}_{a_iD_i}(M)\rightarrow 0.$$
We then obtain the long exact sequence
$$0\rightarrow f_*\mathcal{O}_S\left(M-\sum a_iD_i\right)\rightarrow f_*\mathcal{O}_S(M)\rightarrow \bigoplus f_{*} \mathcal{O}_{a_iD_i}(M)\stackrel{\delta}{\rightarrow} $$
$$ \stackrel{\delta}{\rightarrow} R^1f_*\mathcal{O}_S\left(M-\sum a_iD_i\right)\rightarrow R^1f_*\mathcal{O}_S(M)\rightarrow 0.$$
By assumption all terms in the sequence are vector bundles, and moreover by Riemann--Roch
$$\text{rank\,} R^1f_*\mathcal{O}_S\left(M-\sum a_iD_i\right)=\text{rank\,} R^1f_*\mathcal{O}_S(M).$$
We thus conclude that 
$$0\rightarrow f_*\mathcal{O}_S\left(M-\sum a_iD_i\right)\rightarrow f_*\mathcal{O}_S(M)\rightarrow \bigoplus f_{*}\mathcal{O}_{ a_iD_i}(M)\rightarrow 0 $$
is exact as desired. The last expression of splitting follows from Lemma~\ref{sp1}. 
\end{proof}

\section{Quadratic differentials and Teichm\"uller curves}
\label{sec:quad}
In this section we will review and establish some fundamental results about quadratic differentials and Teichm\"uller curves. We will use $\QQ$ to denote a projectivized stratum of quadratic differentials, i.e., modulo the $\mathbb C^{*}$-scaling of differentials. 
Here we only consider primitive quadratic differentials that are not global squares of Abelian differentials. We will also use a partial compactification $\TQ$ by adding the horizontal boundary divisor in the sense of \cite{BCGGM1, BCGGM2, BCGGM3}, which parameterizes stable quadratic differentials that do not vanish on any irreducible component of the underlying nodal curve. Note that $\TQ$ contains all cusps of Teichm\"uller curves in the stratum \cite[Corollary 4.5]{CM2}, hence this partial compactification indeed completely compactifies every Teichm\"uller curve. From now on when speaking of a Teichm\"uller curve we always mean its closure in $\TQ$.  

For $(Y,q)$ in the stratum of quadratic differentials $\QQ(d_1,\ldots,d_m)$, consider the canonical double cover $(X,\omega)$ of $(Y, q)$, which comes with an involution $\tau$ on $X$ such that $\sigma\colon X\to Y \cong X/\tau$, $\tau^{*}\omega = -\omega$, and $\sigma^{*}q = \omega^2$, where $\omega$ is an Abelian differential on $X$.
Since by assumption $q$ is primitive, the double cover $X$ remains connected. The cohomology of $X$ can be split into the $\tau$-invariant and $\tau$-anti-invariant parts. Let $g=g(Y)$ denote the genus of $Y$, and define 
$$g_{\rm eff}=g(X)-g.$$ 

Consider the following map  
$$\phi\colon \mathcal{Q}_g(\ldots,d_i,\ldots,d_j,\ldots)\rightarrow \mathcal{H}_{g+g_{\rm eff}}\Big(\ldots,\frac{1}{2}d_i,\frac{1}{2}d_i,\ldots,d_j+1,\ldots\Big)$$
induced by the canonical double cover construction which is 
branched at singularities of odd order $d_j$. As in \cite{BCGGM2} the map $\phi$ extends and lifts $\TQ$ to the corresponding partial compactification of the stratum of Abelian differentials. We thus obtain two universal families, denoted by $f'$ and $f$ respectively, in the following commutative diagram: 
$$\xymatrix{
    S  \ar[dr]_{f}
                &  &  \ar[ll]^{\sigma}  S' \ar[dl]^{f'}    \\
                & \TQ                }
$$

If $d_j$ is odd, then we define $D'_j$ to be the section of the fibration $f'\colon S'\rightarrow \TQ$ over the section $D_j$ of the $j$-th singularity of $q$. For odd $d_j$ we have 
$$\sigma_*D_j'=D_j, \quad \sigma^*D_j=2D_j'. $$
If $d_j$ is even, we denote the two distinct sections over $D_j$ as $D'_{j,1}$ and $D'_{j,2}$. For even $d_j$ we have
$$\sigma_*(D'_{j,1}+D'_{j,2})=2D_j, \quad \sigma^*D_j=D'_{j,1}+D'_{j,2}.$$

Let $\mathcal{L}$ and $\mathcal{F}$ be the tautological line bundles on $\TQ$ corresponding to the generating Abelian differentials and the generating quadratic differentials respectively. By definition $$\mathcal{F}=\mathcal{L}^2.$$

We can express the relative canonical bundle class for the fibration $f'\colon S'\rightarrow \TQ$ as 
\begin{eqnarray*}
\omega_{S'/\TQ}=f'^*\mathcal{L}\otimes \mathcal{O}_{S'}\left(\sum_{
   d_j~{\rm even}
  }\frac{d_j}{2}(D'_{j,1}+D'_{j,2})+\sum_{
   d_j~{\rm odd}
  }(d_j+1)D_j'\right). 
\end{eqnarray*}
Similarly we can express the relative canonical bundle class for the fibration $f\colon S\rightarrow \TQ$ as
$$\omega^2_{S/\TQ}=f^*\mathcal{F}\otimes \mathcal{O}_S\Big(\sum d_jD_j\Big).$$

The double cover $\sigma\colon S'\rightarrow S$ is ramified exactly over the divisor 
$$B=\sum_{
   d_j~{\rm odd}
  }D_j$$
in $S$. From the cyclic cover construction (see e.g., \cite[\S 17]{BHPV}) there exists a Cartier divisor class $D$ in $S$ such that
$$ B=2D, \quad \sigma^{*}D = \sum_{
   d_j~{\rm odd}} D'_j,$$
$$\sigma_{*}\mathcal{O}_{S'} = \mathcal{O}_S \oplus \mathcal O_S(-D), \quad  \omega_{S'/\TQ} = \sigma^*(\omega_{S/\TQ} (D)). $$
It follows that 
\begin{eqnarray*}
f'_*\omega_{S'/\TQ} & = & f_{*} \sigma_{*} (\sigma^*(\omega_{S/\TQ}(D))\otimes \mathcal O_{S'}) \\
& = & f_* (\omega_{S/\TQ} (D) \otimes \sigma_{*}\mathcal{O}_{S'}) \\
& = & f_*(\omega_{S/\TQ}\oplus \omega_{S/\TQ}(D)) \\
& = & f_*\omega_{S/\TQ}\oplus f_*\omega_{S/\TQ}(D).
\end{eqnarray*}
Namely, $f'_*\omega_{S'/\TQ}$ decomposes as a direct sum of the $\sigma$-invariant part $f_*\omega_{S/\TQ}$ and the $\sigma$-anti-invariant part $f_*\omega_{S/\TQ}(D)$.

Below we will often use the following notation 
\begin{eqnarray*}
k_j=\left[\frac{d_j+1}{2}\right]. 
\end{eqnarray*}
Then we have
$$\sum k_j=\sum_{
   d_j~{\rm even}}
   \frac{d_j}{2}+\sum_{
   d_j~{\rm odd}
  } \frac{d_j+1}{2} =g_{\text{eff}}+g-1.$$

The following result characterizes the divisor class $D$ geometrically.  
\begin{proposition} 
\label{prop:omega(D)}
In the above setting 
$$\omega_{S/\TQ}(D) =f^*\mathcal{L}\otimes \mathcal{O}_S \left( \sum k_j D_j \right). $$
\end{proposition}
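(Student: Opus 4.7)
The plan is to verify the claimed identity by pulling back both sides via $\sigma$ and comparing with the explicit formula for $\omega_{S'/\TQ}$ already obtained in the excerpt. The left-hand side pulls back to $\omega_{S'/\TQ}$ by the cyclic cover relation $\omega_{S'/\TQ}=\sigma^{*}(\omega_{S/\TQ}(D))$ recorded above, so the task reduces to matching the $\sigma$-pullback of the proposed right-hand side against the known decomposition of $\omega_{S'/\TQ}$.

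First I would compute $\sigma^{*}\bigl(f^{*}\mathcal{L}\otimes\mathcal{O}_S(\sum k_j D_j)\bigr)$. Since $f'=f\circ\sigma$ we have $\sigma^{*}f^{*}\mathcal{L}=f'^{*}\mathcal{L}$. Using the ramification formulas $\sigma^{*}D_j=2D'_j$ for odd $d_j$ and $\sigma^{*}D_j=D'_{j,1}+D'_{j,2}$ for even $d_j$, together with the identities $2k_j=d_j+1$ when $d_j$ is odd and $k_j=d_j/2$ when $d_j$ is even, a direct computation gives
$$\sigma^{*}\Bigl(\sum_j k_j D_j\Bigr)=\sum_{d_j~{\rm odd}}(d_j+1)\,D'_j+\sum_{d_j~{\rm even}}\frac{d_j}{2}(D'_{j,1}+D'_{j,2}),$$
which is exactly the effective divisor that appears in the formula for $\omega_{S'/\TQ}$. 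Hence both sides of the proposition pull back to the same line bundle on $S'$.

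To descend this to an equality on $S$ itself, I would additionally verify that the two line bundles have the same square. Using $\omega_{S/\TQ}^{\otimes 2}=f^{*}\mathcal{F}\otimes\mathcal{O}_S(\sum d_j D_j)$, the identification $\mathcal{F}=\mathcal{L}^{2}$, and $2D=\sum_{d_j~{\rm odd}}D_j$, one obtains $(\omega_{S/\TQ}(D))^{\otimes 2}=f^{*}\mathcal{F}\otimes\mathcal{O}_S(\sum 2k_j D_j)$, which matches the square of the right-hand side. The only potentially non-trivial element of $\ker\sigma^{*}$ on line bundles is read off from the Galois decomposition $\sigma_{*}\mathcal{O}_{S'}=\mathcal{O}_S\oplus\mathcal{O}_S(-D)$ via the Krull--Schmidt comparison of $M\oplus M(-D)$ with $\sigma_{*}\mathcal{O}_{S'}$, and its square is $\mathcal{O}_S(-B)$, which is non-trivial as $B$ is a non-zero effective divisor whenever an odd-order singularity exists (the case with no odd singularities reduces the statement to $D=0$, in which it is immediate from $\omega_{S/\TQ}^{\otimes 2}$). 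Thus equality of pullbacks together with equality of squares forces equality on $S$.

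The main obstacle is this final descent step; the $\sigma^{*}$-pullback bookkeeping is routine once the ramification indices and the parity cases for $d_j$ are handled uniformly through $k_j$, and the identification with $\omega_{S'/\TQ}$ is then essentially immediate from matching divisors term by term.
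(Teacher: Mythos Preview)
Your approach coincides with the paper's: show that the difference $T$ is a $2$-torsion class in $\ker\sigma^{*}$, then analyze the pushforward identification $\mathcal{O}_S(T)\oplus\mathcal{O}_S(T-D)\cong\mathcal{O}_S\oplus\mathcal{O}_S(-D)$. For $B\neq 0$ your argument is correct in spirit, though invoking Krull--Schmidt on the non-complete total space $S$ is delicate; the paper instead observes that on each fiber $F$ any map $\mathcal{O}_S(T)|_F\to\mathcal{O}_S(-D)|_F$ goes from degree $0$ to negative degree and hence vanishes, so the resulting split monomorphism $\mathcal{O}_S(T)\hookrightarrow\mathcal{O}_S$ is an isomorphism.

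The genuine gap is your handling of the case $B=0$. You assert this ``reduces the statement to $D=0$'', but that is false: $2D=B=0$ only says $D$ is $2$-torsion, and in fact $\mathcal{O}_S(-D)$ is the \emph{non-trivial} $2$-torsion class defining the connected \'etale double cover $\sigma$ (triviality of $D$ would force $S'$ to be disconnected, contradicting primitivity of $q$). Consequently when $B=0$ there is a non-trivial $2$-torsion element $\mathcal{O}_S(-D)\in\ker\sigma^{*}$, and your combination ``equal pullbacks plus equal squares'' cannot distinguish $T=0$ from $T=-D$. The paper supplies the missing input: the class $(T-D)|_F\cong\omega_F\big(-\sum k_j p_j\big)$ is a non-trivial $2$-torsion on every fiber (otherwise $q|_F$ would be a global square), so the projection $\mathcal{O}_S\to\mathcal{O}_S(T-D)$ vanishes fiberwise and one again concludes $\mathcal{O}_S\cong\mathcal{O}_S(T)$.
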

\begin{proof}
We have 
\begin{eqnarray*}
(\omega_{S/\TQ}(D))^{\otimes 2} &=& f^*\mathcal{F}\otimes\mathcal{O}_S\Big(\sum d_jD_j\Big)\otimes \mathcal{O}_S(B)  \\
 &=& \left(f^*\mathcal{L}\otimes \mathcal{O}_S \left(\sum k_j D_j\right)\right)^{\otimes 2}. 
\end{eqnarray*}
Hence there is a $2$-torsion divisor class $T$ in $S$ such that 
$$\omega_{S/\TQ}(D) =f^*\mathcal{L}\otimes \mathcal{O}_S \left( \sum k_j D_j + T \right). 
$$
It remains to show that $T = 0$.

The relation $\omega_{S'/\TQ} = \sigma^{*} (\omega_{S/\TQ}(D))$ implies that $\sigma^{*}\mathcal O_S(T) = \mathcal O_{S'}$. Pushing it forward under $\sigma$ yields the identification that $$\mathcal O_S(T) \oplus \mathcal O_S(T-D)= \mathcal O_S \oplus \mathcal O_S(-D).$$ 
Consider the induced map 
$\mathcal O_S(T)|_F\rightarrow \mathcal O_S(-D)|_F$ where $F$ is a fiber of $S$. Note that $\deg T|_F = 0$. If $B\neq 0$, then $\deg (-D)|_F < 0$, and hence the above induced map on $F$ is zero, which implies that $\mathcal O_S(T)\rightarrow \mathcal O_S(-D)$ is the zero map.  
Hence $\mathcal O_S(T)= \mathcal O_S$ and $T$ is the trivial divisor class. If $B=0$, since by assumption the canonical double cover is connected, then $T-D=\omega_{S/\TQ}\big(-\sum k_j D_j\big) \otimes f^{*}\mathcal L^{-1}$ is a non-trivial $2$-torsion and $(T-D)|_F$ is also a non-trivial $2$-torsion, for otherwise $q$ would be a global square on $F$. Therefore, the induced map 
$\mathcal O_S|_F\rightarrow \mathcal O_S(T-D)|_F$ is the zero map. Hence  
$\mathcal O_S = \mathcal O_S(T)$ and $T$ is the trivial divisor class. 
\end{proof}

For simplicity below we will denote by $p_j$ the intersection of the section $D_j$ with a fiber. The next result studies when the Hodge bundle on a family of quadratic differentials can split as a direct sum of line bundles. 

\begin{proposition}
\label{prop:split}
Let $B$ be a family of quadratic differentials in $\TQ(d_1,\ldots, d_m)$. If there exist integers $a_j$ such that $0\leq a_j\leq k_j$ for all $j$, $\sum a_j=g$, and $h^0\big(\sum a_jp_j\big)=1$ for every fiber of $S$ over $B$, then the invariant and anti-invariant parts of the Hodge bundle on $B$ split into the following direct sums of line bundles:  
$$f_* \omega_{S/B}=\bigoplus f_*\mathcal{O}_{a_jD_j}(\omega_{S/B}) = \bigoplus_j\bigoplus^{a_j-1}_{i=0}f_*\mathcal{O}_{D_j}(\omega_{S/B}-iD_j), $$
$$f_* \omega_{S/B}(D)= \mathcal{L}\oplus\left(\bigoplus \big(\mathcal{L}\otimes f_*\mathcal{O}_{(k_j-a_j)D_j}(k_jD_j)\big)\right) = \mathcal{L}\oplus\left(\bigoplus_j\bigoplus^{k_j}_{i=a_j+1}\big(\mathcal{L}\otimes f_*\mathcal{O}_{D_j}(iD_j)\big)\right).$$ 
Moreover, if $B$ is compact $n$-dimensional with an ample divisor $A$ such that $D_j^2 \cdot f^*A^{n-1} < 0$ in $S$, then the direct sums of graded quotients of the Harder--Narasimhan filtrations of $f_* \omega_{S/B}$ and $f_* \omega_{S/B}(D)$ with respect to $A$ coincide with the above direct sum splittings respectively.  
\end{proposition}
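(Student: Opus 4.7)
The plan is to apply Lemma~\ref{sp2} twice---once to each of the invariant and anti-invariant parts---with carefully chosen twists, and then to decompose the resulting pieces into line bundles via Lemma~\ref{sp1}. For the invariant part $f_*\omega_{S/B}$, I would take $M=\omega_{S/B}$ and subtract $\sum a_jD_j$. On each fiber $F$, Riemann--Roch applied to $\sum a_jp_j$ (with $\deg=g$ and $h^0=1$ by hypothesis) gives $h^1(\sum a_jp_j)=0$, and Serre duality then yields $h^0(K_F-\sum a_jp_j)=0$. Both the constant-rank and numerical conditions of Lemma~\ref{sp2} become immediate ($h^0(K_F)=g=0+\sum a_j$), and the resulting short exact sequence has a vanishing leftmost term, giving $f_*\omega_{S/B}\cong\bigoplus_j f_*\mathcal{O}_{a_jD_j}(\omega_{S/B})$. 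Applying Lemma~\ref{sp1} to each factor completes the decomposition into line bundles of the stated form.

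For the anti-invariant part, Proposition~\ref{prop:omega(D)} rewrites it as $f_*\omega_{S/B}(D)=\mathcal{L}\otimes f_*\mathcal{O}_S(\sum k_jD_j)$, and I would apply Lemma~\ref{sp2} with $M=\sum k_jD_j$ and subtraction $\sum(k_j-a_j)D_j$, so that the residual divisor is $\sum a_jD_j$. Fiberwise $h^0(\mathcal{O}_F(\sum k_jp_j))=g_{\rm eff}$ (the constant rank of the anti-invariant Hodge bundle, equivalently from Riemann--Roch together with $h^1=0$), while $h^0(\mathcal{O}_F(\sum a_jp_j))=1$ by hypothesis; the required identity $g_{\rm eff}-1=\sum(k_j-a_j)$ follows from $\sum k_j=g+g_{\rm eff}-1$ and $\sum a_j=g$. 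Lemma~\ref{sp2} then yields
\[
0\to f_*\mathcal{O}_S\big(\textstyle\sum a_jD_j\big)\to f_*\mathcal{O}_S\big(\textstyle\sum k_jD_j\big)\to\bigoplus_j f_*\mathcal{O}_{(k_j-a_j)D_j}(k_jD_j)\to 0,
\]
where the leftmost term is canonically $\mathcal{O}_B$: the natural inclusion $\mathcal{O}_B=f_*\mathcal{O}_S\hookrightarrow f_*\mathcal{O}_S(\sum a_jD_j)$ is an isomorphism of rank-one bundles, being nonzero on every fiber. Tensoring by $\mathcal{L}$ and applying Lemma~\ref{sp1} to each quotient summand expresses the quotient as the stated direct sum of line bundles.

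The main obstacle will be arguing that this anti-invariant extension splits, so that $\mathcal{L}$ is a genuine direct summand rather than just a subsheaf of $f_*\omega_{S/B}(D)$. To exhibit a retraction, I plan to exploit the residue-projection map $\mathcal{O}_S(\sum k_jD_j)\to\bigoplus_j\mathcal{O}_{k_jD_j}(k_jD_j)$, whose pushforward has kernel exactly $\mathcal{O}_B$; composing with the Lemma~\ref{sp1} identification $f_*\mathcal{O}_{k_jD_j}(k_jD_j)=\bigoplus_{i=1}^{k_j}f_*\mathcal{O}_{D_j}(iD_j)$ and the natural projection onto the top-order summands with $i\geq a_j+1$, Lemma~\ref{sp2} ensures the result is surjective with kernel $\mathcal{O}_B$, and the Lemma~\ref{sp1} splitting on the target should lift back to produce a canonical complement to $\mathcal{L}$ inside $f_*\omega_{S/B}(D)$. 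Once the splitting is established, the Harder--Narasimhan assertion is immediate: under $D_j^2\cdot f^*A^{n-1}<0$, each $f_*\mathcal{O}_{D_j}(iD_j)$ has degree $iD_j^2\cdot f^*A^{n-1}$ strictly decreasing in $i$, so provided the summand slopes are pairwise distinct, the uniqueness of the Harder--Narasimhan filtration with respect to $A$ forces its graded pieces to coincide with the direct summands just produced.
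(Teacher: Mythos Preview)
Your overall approach is essentially the same as the paper's: apply Lemma~\ref{sp2} with $M=\omega_{S/B}$ subtracting $\sum a_jD_j$ for the invariant part, and (after factoring out $\mathcal{L}$ via Proposition~\ref{prop:omega(D)}) with $M=\sum k_jD_j$ subtracting $\sum(k_j-a_j)D_j$ for the anti-invariant part, then split each piece into line bundles via Lemma~\ref{sp1}. The paper cites \cite[Lemma~A.2]{CM2} for $f_*\mathcal{O}_S(\sum a_jD_j)=\mathcal{O}_B$, which you reprove directly; the rest of your Riemann--Roch bookkeeping is correct.

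You are right to flag the splitting of $\mathcal{L}$ off the anti-invariant part as the delicate step---the paper's own proof is terse here---but your proposed fix does not go through. Knowing that $f_*\mathcal{O}_S(\sum k_jD_j)\twoheadrightarrow Q$ is surjective with kernel $\mathcal{O}_B$, together with an internal direct-sum decomposition of $Q$, does not by itself produce a section $Q\to f_*\mathcal{O}_S(\sum k_jD_j)$. The intermediate map $f_*\mathcal{O}_S(\sum k_jD_j)\to\bigoplus_j f_*\mathcal{O}_{k_jD_j}(k_jD_j)$ you propose to factor through is \emph{not} surjective---its cokernel is $R^1f_*\mathcal{O}_S$, of rank $g$---so there is no way to ``lift back'' the Lemma~\ref{sp1} splitting of that larger target. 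Note, however, that the Harder--Narasimhan assertion does not actually require the direct-sum splitting: under the negativity hypothesis each summand $\mathcal{L}\otimes f_*\mathcal{O}_{D_j}(iD_j)$ of the quotient has strictly smaller slope than $\mathcal{L}$, so $\mathcal{L}$ is the maximal destabilizing subsheaf and the remaining HN filtration is read off from the direct-sum quotient. (Your caveat ``provided the summand slopes are pairwise distinct'' is also unnecessary: for any direct sum of line bundles the direct sum of HN graded pieces recovers the original sum regardless of slope coincidences.)
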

\begin{proof} 
Let $K$ be the canonical divisor class of a fiber $F$ over $B$. 
By using the assumption $h^0(\sum a_jp_j) = 1$ and Riemann--Roch, we have 
$h^0\big(K -\sum a_j p_j\big) = 0$, and 
hence $h^0\big(K-\sum (a_j+l_j) p_j\big) = 0$
for any $l_j\geq 0$. It follows that 
$$ f_*\omega_{S/B}\Big(-\sum a_jD_j\Big) = 0, $$
$$h^0\Big(\sum k_jp_j\Big)-\sum(k_j-a_j)=h^0\Big(\sum a_jp_j\Big) = 1. $$
We can thus apply Lemma \ref{sp2} to obtain that 
\begin{eqnarray*}
f_* \omega_{S/B} & = &  f_*\omega_{S/B}/f_*\omega_{S/B}\Big(-\sum a_jD_j\Big)\\ 
& = & \bigoplus f_*\mathcal{O}_{a_jD_j}(\omega_{S/B}). 
\end{eqnarray*}
By \cite[Lemma A.2]{CM2} we have 
$$ \quad f_*\mathcal{O}_S\Big(\sum a_jD_j\Big) = \mathcal O_B.$$
Hence we can apply Proposition~\ref{prop:omega(D)} and Lemma \ref{sp2} to obtain that 
\begin{eqnarray*}
    f_* \omega_{S/B}(D) / \mathcal L & = &  
\mathcal L \otimes \left( 
f_*\mathcal{O}_S\Big(\sum k_jD_j\Big)/f_*\mathcal{O}_S\Big(\sum a_jD_j\Big)\right) \\
& = & \bigoplus\big( \mathcal L \otimes 
 f_*\mathcal{O}_{(k_j-a_j)D_j}(k_jD_j)\big). 
 \end{eqnarray*}
By utilizing Lemma \ref{sp1} we can derive the desired expressions for splitting as the direct sums of the
graded quotients of the respective filtrations.  
\end{proof}

In the remaining part of this section we apply the preceding results to study Teichm\"uller curves and Lyapunov exponents. For the canonical double cover $(X, \omega)$ of $(Y, q)$ with the involution $\tau$, denote by $\lambda_i^+$ the Lyapunov exponents of the $\tau$-invariant part of $H^1(X,\mathbb{R})$, and $\lambda_i^-$ the Lyapunov exponents of the $\tau$-anti-invariant part. Define
$$L^+=\lambda_1^++\cdots +\lambda_g^+\quad {\rm and} \quad L^-=\lambda_1^-+\cdots +\lambda_{g_{\rm eff}}^- $$
where $g = g(Y)$ and $g+g_{\rm eff} = g(X)$. 
By \cite[Theorem 2]{EKZ} the following relation holds: 
\begin{equation*}
L^--L^+=\frac{1}{4}\sum_{
  d_j~{\rm odd} 
  }\frac{1}{d_j+2}.
\end{equation*}
Note that the $\tau$-invariant part descends to $Y$, and hence the Lyapunov exponents $\lambda_i^+$ of $(Y,q)$ are analogous to the ordinary Lyapunov exponents in the case of Abelian differentials. 

Next we recall some results about Teichm\"uller curves from \cite[Proposition 4.2]{CM2}. Denote by $\chi$ the Euler characteristic of a Teichm\"uller curve $C$ in $\TQ$, which also lifts as a Teichm\"uller curve via the canonical double covering construction in the corresponding stratum of Abelian differentials. We have 
$$\deg \mathcal L = \frac{\chi}{2} \quad {\rm and}\quad \deg \mathcal F = \chi $$
restricted to $C$. The self-intersection number of a section $D_j$ in the universal family $S$ over $C$ is 
$$D_j^2= -\frac{\chi}{d_j+2}.$$
For later use we also compute the degree of $f_*\mathcal{O}_{D_j}(\omega_{S/C}-iD_j)$ as follows:
\begin{eqnarray}\label{nu}
\deg f_*\mathcal{O}_{D_j}(\omega_{S/C}-iD_j) & = & -(i+1)D_j^2 \\ \nonumber
& = & \frac{\chi}{2} \cdot \frac{2i+2}{d_j+2}. 
\end{eqnarray}

Recall that $f'_*\omega_{S'/C}$ splits as a direct sum of the $\tau$-invariant part $f_*\omega_{S/C}$ and the $\tau$-anti-invariant part $f_*\omega_{S/C}(D)$. By \cite{EKZ, CM1, CM2} we have 
$$L^+(C)=\frac{\deg f_*(\omega_{S/C})}{\frac{1}{2}\chi}\quad {\rm and} \quad L^-(C)=\frac{\deg f_*\omega_{S/C}(D)}{\frac{1}{2}\chi}.$$
Define a $(g+g_{\rm eff})$-tuple of numbers representing the normalized Harder--Narasimhan polygon of the Hodge bundle $f'_*\omega_{S'/C}$ as 
$$w(C) = (w_1,\dots,w_{g+g_{\mathrm{eff}}}),$$ 
which is introduced in \cite[Definition 3.7]{YZ1}. Here the normalization mods out the factor $ \deg \mathcal L = \chi/2$, that is, $$w_i = \frac{\mu_i}{\frac{1}{2}\chi}$$ 
where the sequence of $\mu_i$ was defined  in \eqref{eq:mu_i} for the Harder--Narasimhan filtration of  $f'_*\omega_{S'/C}$. 
Since 
$$\text{grad}({\rm HN}(f'_*\omega_{S'/C}))=\text{grad}({\rm HN}(f_*\omega_{S/C}))\oplus \text{grad}({\rm HN}(f_*\omega_{S/C}(D))),$$
we can divide the $w_i$ into the following two subsets of numbers \cite{Yu}:
$$w_1^+\geq \cdots \geq w_g^+\quad\text{\,\,and\,\,} \quad 1=w_1^-\geq \cdots \geq w_{g_{\mathrm{eff}}}^-,$$
where $w_1^+,\dots,w_g^+$ and $w_1^-,\dots, w_{g_{\mathrm{eff}}}^-$ come respectively from the graded quotients $\text{grad}({\rm HN}(f_*\omega_{S/C}))$ and $\text{grad}({\rm HN}(f_*\omega_{S/C}(D)))$. By definition, we have
$$L^+(C)=w_1^+ +\cdots + w_g^+\quad \text{\,\,and\,\,}\quad L^-(C)=w_1^- +\cdots +w_{g_{\rm eff}}^-.$$
For any $k$, we also have the following inequalities between the Harder--Narasimhan polygon and the Lyapunov polygon \cite{Yu, EKMZ}:
\begin{equation}\label{lw}\sum_{i=1}^{k} \lambda_i^+\geq \sum_{i=1}^{k} w_i^+ \quad 
\text{\,\,and\,\,}\quad 
\sum_{i=1}^{k} \lambda_i^-\geq \sum_{i=1}^{k} w_i^-.
\end{equation}

Now we can apply the previously established results to the case of Teichm\"uller curves. 

\begin{proposition}
\label{prop:split-teich}
If there exist integers $a_j$ such that 
$0\leq a_j\leq k_j$ for all $j$, $\sum a_j=g$, and $h^0\big(\sum a_jp_j\big)=1$ for every fiber over the Teichm\"uller curve $C$, then 
$f_* \omega_{S/C}$ and $f_* \omega_{S/C}(D)$ split and coincide respectively with the direct sums of graded quotients of their Harder--Narasimhan filtrations: 
$$f_* \omega_{S/C} = \mathrm{grad\,}({\rm HN}(f_*\omega_{S/C}))=\bigoplus_j\bigoplus^{a_j-1}_{i=0}f_*\mathcal{O}_{D_j}(\omega_{S/C}-iD_j),$$
$$f_* \omega_{S/C}(D) = \mathrm{grad\,}({\rm HN}(f_*\omega_{S/C}(D)))=\mathcal{L}\oplus\left(\bigoplus_j\bigoplus^{k_j}_{i=a_j+1}\big(\mathcal{L}\otimes f_*\mathcal{O}_{D_j}(iD_j)\big)\right).$$
\end{proposition}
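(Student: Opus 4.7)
The plan is to obtain Proposition \ref{prop:split-teich} as a direct specialization of Proposition \ref{prop:split} to the one-dimensional base $B = C$. The three hypotheses on the integers $a_j$---namely $0 \le a_j \le k_j$, $\sum_j a_j = g$, and $h^0\bigl(\sum_j a_j p_j\bigr) = 1$ on every fiber---are imported verbatim, so the first part of Proposition \ref{prop:split} immediately produces the desired direct sum decompositions
$$f_*\omega_{S/C} \;=\; \bigoplus_j\bigoplus_{i=0}^{a_j-1} f_*\mathcal{O}_{D_j}(\omega_{S/C} - iD_j),$$
$$f_*\omega_{S/C}(D) \;=\; \mathcal{L} \oplus \bigoplus_j\bigoplus_{i=a_j+1}^{k_j} \bigl(\mathcal{L} \otimes f_*\mathcal{O}_{D_j}(iD_j)\bigr).$$

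Next, to identify these splittings with the graded objects of the Harder--Narasimhan filtrations, I invoke the ``moreover'' clause of Proposition \ref{prop:split}. This requires verifying the negativity hypothesis $D_j^2 \cdot (f^*A)^{n-1} < 0$ in $S$. Since $C$ is one-dimensional we may take $n=1$, so the factor $(f^*A)^{n-1}$ collapses to $1$ and the condition reduces to $D_j^2 < 0$; this in turn follows at once from the Teichm\"uller-curve formula $D_j^2 = -\chi/(d_j+2)$ recalled above, together with the standard positivity convention $\chi > 0$. Alternatively, no polarization need be chosen, because the Harder--Narasimhan filtration is canonical on a one-dimensional base; the negativity $D_j^2 < 0$ is then only used to order the slopes of the summands, which can be read off from \eqref{nu}.

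Combining the two halves of Proposition \ref{prop:split} yields the chains of equalities
$$f_* \omega_{S/C} \;=\; \mathrm{grad}({\rm HN}(f_*\omega_{S/C})) \;=\; \bigoplus_j\bigoplus_{i=0}^{a_j-1} f_*\mathcal{O}_{D_j}(\omega_{S/C} - iD_j),$$
and an analogous chain for $f_*\omega_{S/C}(D)$, exactly as asserted. I do not foresee any serious obstacle: the substantive arguments have already been absorbed into Lemmas \ref{sp1}--\ref{sp2} and packaged into Proposition \ref{prop:split}, and the proof of Proposition \ref{prop:split-teich} amounts simply to checking that a Teichm\"uller curve $C$ meets the hypotheses of that framework.
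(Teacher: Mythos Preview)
Your proposal is correct and follows essentially the same approach as the paper: the paper's proof simply observes that $D_j^2 = -\chi/(d_j+2) < 0$ and then invokes Proposition~\ref{prop:split} as a special case with $B=C$.
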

\begin{proof} 
Note that the self-intersection number $D_j^2 = - \chi/(d_j+2) < 0$ in the universal family $S$ over $C$ (see also \cite[Theorem (6.33)]{HM} for a general one-parameter family of curves). Then the claim follows as a special case of Proposition~\ref{prop:split}. 
\end{proof}

\section{Non-varying strata of quadratic differentials} 
\subsection{Genus zero}
We first prove Theorem~\ref{main} (1) for the case of the non-varying strata of quadratic differentials in genus zero. The following proof also contains a new computation of the normalized Harder--Narasimhan polygon on the hyperellitpic locus in a stratum of Abelian differentials that arises from the canonical double cover of quadratic differentials in genus zero. 
\begin{proposition}[{\cite[Theorem 5.6]{YZ1}}]
\label{hy} Let $C$ be a Teichm\"uller curve in the hyperelliptic locus of a stratum of Abelian differentials.  
Denote by $(d_1,\ldots,d_n)$ the orders of singularities of the underlying quadratic differentials in genus zero. Then the Hodge bundle on $C$ splits as a direct sum of line bundles which is equal to the direct sum of graded quotients of its Harder--Narasimhan filtration. Moreover, the normalized Weierstrass exponents of the Harder--Narasimhan polygon are given by 
$w_1 = w_1^-=1$ and $w_i=w_i^-$ for $i\geq 2$ as the $(i-1)$-st largest number in the set 
$$\left\{1-\frac{2k}{d_j+2}\ \mid \ 0<2k\leq d_j+1,\ j=1,\ldots,n\right\}.$$
\end{proposition}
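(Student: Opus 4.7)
The plan is to deduce the statement as a direct corollary of Proposition~\ref{prop:split-teich} with the trivial choice $a_j=0$ for every $j$. Since the underlying quadratic differentials live in genus zero we have $g=g(Y)=0$, so the $\tau$-invariant summand $f_{*}\omega_{S/C}$ of the Hodge bundle has rank zero, and the whole Hodge bundle on $C$ coincides with the $\tau$-anti-invariant summand $f_{*}\omega_{S/C}(D)$, of rank $g_{\mathrm{eff}}=g(X)$. With $a_j=0$ the hypotheses $0\le a_j\le k_j$, $\sum a_j=g=0$, and $h^{0}\bigl(\sum a_j p_j\bigr)=h^{0}(0)=1$ of Proposition~\ref{prop:split-teich} are automatically satisfied.

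Applying that proposition produces the splitting
$$f_{*}\omega_{S/C}(D)\;=\;\mathcal L\,\oplus\,\bigoplus_{j}\bigoplus_{i=1}^{k_j}\bigl(\mathcal L\otimes f_{*}\mathcal O_{D_j}(iD_j)\bigr),$$
and the right-hand side simultaneously equals the direct sum of graded quotients of the Harder--Narasimhan filtration, using $D_j^{2}=-\chi/(d_j+2)<0$ on any Teichm\"uller curve. Each summand is a line bundle, because $f|_{D_j}\colon D_j\to C$ is an isomorphism and so $f_{*}\mathcal O_{D_j}(iD_j)$ is the pushforward of the line bundle $\mathcal O_{D_j}(iD_j)$ of degree $iD_j^{2}$.

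Next we compute normalized degrees. Using $\deg\mathcal L=\chi/2$ and $D_j^{2}=-\chi/(d_j+2)$, the summand $\mathcal L$ contributes $w_1^{-}=1$, while $\mathcal L\otimes f_{*}\mathcal O_{D_j}(iD_j)$ has degree $(\chi/2)\bigl(1-2i/(d_j+2)\bigr)$ and hence the normalized exponent $1-2i/(d_j+2)$. As $(i,j)$ ranges over $1\le i\le k_j=\lfloor (d_j+1)/2\rfloor$ and $1\le j\le n$, the integer $2i$ sweeps precisely the positive even values satisfying $2i\le d_j+1$, so the multiset of normalized exponents other than $w_1^{-}$ matches the set in the statement. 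Sorting these in decreasing order yields $w_2^{-},\ldots,w_{g_{\mathrm{eff}}}^{-}$ as claimed.

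Since the structural content is already packaged into Proposition~\ref{prop:split-teich} and the cusp computation~\eqref{nu}, there is no real obstacle here. The only care needed is the short case check that, for both even and odd $d_j$, the range $1\le i\le k_j$ produces the set $\{2k\,:\,0<2k\le d_j+1\}$, together with the rank balance $1+\sum_j k_j=g_{\mathrm{eff}}$ (which follows from the identity $\sum k_j=g_{\mathrm{eff}}+g-1$ recalled in Section~\ref{sec:quad}) confirming that we have recovered the full Hodge bundle.
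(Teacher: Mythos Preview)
Your proof is correct and follows essentially the same approach as the paper: apply Proposition~\ref{prop:split-teich} with the trivial choice $a_j=0$ (valid since $g=0$), obtain the splitting of $f_{*}\omega_{S/C}(D)$ into line bundles coinciding with the graded Harder--Narasimhan quotients, and read off the normalized degrees $1-2i/(d_j+2)$ from the self-intersection formula $D_j^2=-\chi/(d_j+2)$. Your write-up is slightly more detailed than the paper's, but the argument is the same.
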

\begin{proof}
Since the genus of the underlying curves is zero, we can take $a_j = 0$ for all $j$ and apply Proposition~\ref{prop:split-teich}. 
It follows that 
$$ f_*\omega_{S/C}(D) = \mathcal L\oplus \left( \bigoplus_j\bigoplus_{i=1}^{k_j} \big(\mathcal L\otimes f_{*}\mathcal O_{D_j}(iD_j)\big)\right), $$
which is also equal to the direct sum of graded quotients of the Harder--Narasimhan filtration. 
Finally for each line bundle $\mathcal{L}\otimes f_*\mathcal{O}_{D_j}(kD_j)$, we can compute its normalized degree as 
$$\frac{\deg (\mathcal{L}\otimes f_*\mathcal{O}_{D_j}(kD_j))}{\frac{1}{2}\chi} =1-\frac{2k}{d_j+2}.$$
This characterizes completely the Harder--Narasimhan polygon for Teichm\"uller curves in the hyperelliptic loci of Abelian differentials.
\end{proof}

\begin{remark}
\label{rem:g=0}
Comparing to \cite{YZ1}, a new result here is the splitting of the Hodge bundle as a direct sum of line bundles on the strata of quadratic differentials in genus zero (identified with the hyperelliptic loci of Abelian differentials). Moreover, the invariant part of the Hodge bundle is zero since the genus of the underlying quadratic differentials is zero, and hence there is no $w_i^+$ in this case. 
\end{remark}

\subsection{Irregular strata}
In this section we prove Theorem~\ref{main} (2) for the case of the non-varying irregular strata. 
\begin{proposition}\label{ir}
For Teichm\"uller curves in the irregular strata $\mathcal{Q}(9,-1)^{\rm irr}$, $\mathcal{Q}(6,3,-1)^{\rm irr}$, $\mathcal{Q}(12)^{\rm irr}$, and $\mathcal{Q}(9,3)^{\rm irr}$, the values of their $w_i^+$ and $w_i^-$ are given in Table \ref{irr}. 
\end{proposition}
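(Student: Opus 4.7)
My plan is to treat each of the four irregular strata separately, adapting the splitting argument of Proposition \ref{prop:split-teich} to the presence of an extra linear equivalence. The irregular components in \cite{CM2} are characterized by a Weierstrass-type relation on the support of the zeros: for the natural choice of nonnegative integers $a_j$ with $0\leq a_j\leq k_j$ and $\sum a_j = g$ that one would use on the regular component, the hypothesis $h^0(\sum a_j p_j)=1$ fails (the extra section is precisely what cuts out the irregular locus). This obstructs a direct application of Proposition \ref{prop:split-teich}, so a modified choice of filtration is needed. A first step is to identify, for each stratum in the list, the exact relation defining the irregular component from \cite{CM2}, and to translate it into the statement that a specific effective divisor $\sum b_j p_j$ with $\sum b_j\le \sum k_j$ moves in a pencil on every fiber.

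With this in hand, I would build the Harder--Narasimhan filtration of $f_*\omega_{S/C}$ and of $f_*\omega_{S/C}(D)$ in stages. Starting from $f_*\mathcal O_S\cong\mathcal O_C$ and successively twisting by sections $D_j$, I would apply Lemma \ref{sp2} at each step where the relevant $h^0$ is locally constant along $C$. The irregular relation is preserved on the whole irregular component, so the dimensions remain constant on each Teichm\"uller curve contained in it, and Lemma \ref{sp2} continues to produce splittings into line bundles; the only difference from the regular case is that the jumps in $h^0$ occur at a shifted set of exponents dictated by the Weierstrass relation. Since $C$ is a curve, uniqueness of the Harder--Narasimhan filtration is automatic, so producing any filtration with line bundle graded pieces whose slopes strictly decrease identifies it with the HN filtration.

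Finally I would compute the normalized degree of each line bundle graded piece via \eqref{nu} together with Proposition \ref{prop:omega(D)} and the formula $\deg\mathcal L=\chi/2$, sort the resulting values into the decreasing sequences $w_1^+\geq\cdots\geq w_g^+$ and $1=w_1^-\geq\cdots\geq w_{g_{\mathrm{eff}}}^-$, and compare with the entries of Table \ref{irr}. The main obstacle is bookkeeping which graded pieces are $\sigma$-invariant and which are $\sigma$-anti-invariant once the irregular relation reshuffles them: the extra section can migrate a line bundle that would normally contribute to $f_*\omega_{S/C}(D)$ into $f_*\omega_{S/C}$, or conversely, so tracking $\sigma$-equivariance carefully via the description in Proposition \ref{prop:omega(D)} is what ultimately distinguishes the four cases $\mathcal{Q}(9,-1)^{\rm irr}$, $\mathcal{Q}(6,3,-1)^{\rm irr}$, $\mathcal{Q}(12)^{\rm irr}$, and $\mathcal{Q}(9,3)^{\rm irr}$. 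A useful consistency check throughout is the identity $L^- - L^+ = \frac{1}{4}\sum_{d_j\,\mathrm{odd}} \frac{1}{d_j+2}$ from \cite{EKZ}, which the computed $w_i^\pm$ must satisfy after summation.
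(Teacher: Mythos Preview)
Your plan is essentially the same as the paper's: for each irregular stratum one uses the specific $h^0$ values from \cite{CM2} (e.g.\ $h^0(2p_1)=1$, $h^0(3p_1)=h^0(4p_1)=2$, $h^0(5p_1)=3$ for $\mathcal Q(9,-1)^{\rm irr}$), writes down the filtration of $f_*\omega_{S/C}$ by the subbundles $f_*\omega_{S/C}(-\sum i_jD_j)$ and of $f_*\omega_{S/C}(D)$ by $\mathcal L\otimes f_*\mathcal O_S(\sum i_jD_j)$, identifies each nonzero graded piece as a line bundle via Lemma~\ref{sp2}, and reads off the normalized degrees from~\eqref{nu}. Since the graded degrees are strictly monotone, this is the Harder--Narasimhan filtration.

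Two points of your write-up need correction, though neither is fatal. First, the concern about ``migration'' between the $\sigma$-eigenspaces is misplaced: the decomposition $f'_*\omega_{S'/C}=f_*\omega_{S/C}\oplus f_*\omega_{S/C}(D)$ is fixed before any filtration is introduced, so you simply filter each summand independently. What the irregular relation does is force two consecutive terms of the natural filtration to coincide (for instance $f_*\omega_{S/C}(-3D_1)=f_*\omega_{S/C}(-2D_1)$ in $\mathcal Q(9,-1)^{\rm irr}$, since $h^0(K-3p_1)=h^0(K-2p_1)=1$); this shifts which value $\tfrac{2k}{d_j+2}$ occurs as a graded degree, but never moves a line bundle across the $\sigma$-eigenspace decomposition. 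Second, you should not expect Lemma~\ref{sp2} to give a global splitting of the Hodge bundle here: because $h^0(\sum a_jp_j)=1$ fails for every admissible choice of $a_j$, Proposition~\ref{prop:split-teich} is genuinely unavailable, and the paper records only a filtration with line-bundle quotients (cf.\ Remark~\ref{rem:irregular}). This is all that is needed to compute the $w_i^{\pm}$.
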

\begin{proof} 
In the stratum $\mathcal{Q}(9,-1)^{\rm irr}$, we have $h^0(2p_1)=1$, $h^0(3p_1)=h^0(4p_1)=2$, and $h^0(5p_1)=3$. Note that these also hold for boundary points of Teichm\"uller curves in the stratum by using a similar analysis as in \cite[Section 6.3]{CM2}. We then obtain the Harder--Narasimhan filtration for the invariant part of the Hodge bundle as follows:
$$0=f_*\omega_{S/C}(-4D_1)\subset f_*\omega_{S/C}(-3D_1)=f_*\omega_{S/C}(-2D_1)\subset f_*\omega_{S/C}(-D_1)\subset f_*\omega_{S/C}.$$
By using \eqref{nu} the exponents of the invariant part of the Hodge bundle are given by 
$$w_1^+=\frac{8}{11},\quad w_2^+=\frac{4}{11}, \quad w_3^+=\frac{2}{11}.$$ 
Similarly the Harder--Narasimhan filtration for the anti-invariant part of the Hodge bundle is as follows:
$$0\subset\mathcal{L}\otimes f_*\mathcal{O}_S(2D_1)\subset\mathcal{L}\otimes f_*\mathcal{O}_S(3D_1)=\mathcal{L}\otimes f_*\mathcal{O}_S(4D_1)\subset \mathcal{L}\otimes f_*\mathcal{O}_S(5D_1)=f_*\omega_{S/C}(D).$$
 The corresponding exponents for this filtration are given by $$w_1^-=1, \quad w_2^-=1-\frac{6}{11}, \quad w_3^-=1-\frac{10}{11}. $$

In the stratum $\mathcal{Q}(6,3,-1)^{\rm irr}$, we have $h^0(p_1+p_2)=1$, $h^0(2p_1+p_2)=h^0(3p_1+p_2)=2$, and $h^0(3p_1+2p_2)=3$, which also hold for boundary points of Teichm\"uller curves in this stratum. We then obtain the Harder--Narasimhan filtration for the invariant part of the Hodge bundle as follows: $$0=f_*\omega_{S/C}(-3D_1-D_2)\subset f_*\omega_{S/C}(-2D_1-D_2)=f_*\omega_{S/C}(-D_1-D_2)\subset f_*\omega_{S/C}(-D_1)\subset  f_*\omega_{S/C}.$$ 
The exponents of the invariant part of the Hodge bundle are given by
 $$w_1^+=\frac{6}{8}, \quad w_2^+=\frac{2}{5}, \quad w_3^+=\frac{2}{8}.$$ 
Similarly the Harder--Narasimhan filtration for the anti-invariant part of the Hodge bundle is as follows:
$$0\subset\mathcal{L}\otimes f_*\mathcal{O}_S(D_1+D_2)\subset\mathcal{L}\otimes f_*\mathcal{O}_S(2D_1+D_2)=\mathcal{L}\otimes f_*\mathcal{O}_S(3D_1+D_2)\subset \mathcal{L}\otimes f_*\mathcal{O}_S(3D_1+2D_2)=f_*\omega_{S/C}(D).$$
 The corresponding exponents of this filtration are given by 
 $$w_1^-=1, \quad w_2^-=1-\frac{4}{8}, \quad w_3^-=1-\frac{4}{5}. $$

In the stratum $\mathcal{Q}(12)^{\rm irr}$, we have $h^0(3p_1)=1$, $h^0(4p_1)=h^0(5p_1)=2$, and $h^0(6p_1)=3$, which also hold for boundary points of Teichm\"uller curves in this stratum. We then obtain the Harder--Narasimhan filtration for the invariant part of the Hodge bundle as follows: $$0=f_*\omega_{S/C}(-5D_1)\subset f_*\omega_{S/C}(-4D_1)= f_*\omega_{S/C}(-3D_1)\subset f_*\omega_{S/C}(-2D_1)\subset f_*\omega_{S/C}(-D_1)\subset f_*\omega_{S/C}.$$
The corresponding exponents of the filtration are given by
 $$w_1^+=\frac{10}{14},\quad w_2^+=\frac{6}{14}, \quad w_3^+=\frac{4}{14}, \quad w_4^+=\frac{2}{14}.$$
 Similarly the Harder--Narasimhan filtration for the anti-invariant part of the Hodge bundle is as follows:
$$0\subset\mathcal{L}\otimes f_*\mathcal{O}_S(3D_1)\subset\mathcal{L}\otimes f_*\mathcal{O}_S(4D_1)=\mathcal{L}\otimes f_*\mathcal{O}_S(5D_1)\subset \mathcal{L}\otimes f_*\mathcal{O}_S(6D_1)=f_*\omega_{S/C}(D).$$
 The corresponding exponents of this filtration are given by 
 $$w_1^-=1, \quad w_2^-=1-\frac{8}{14}, \quad w_3^-=1-\frac{12}{14}. $$

In the stratum $\mathcal{Q}(9,3)^{\rm irr}$, we have $h^0(2p_1+p_2)=1$, $h^0(3p_1+p_2)=h^0(4p_1+p_2)=2$, and $h^0(4p_1+2p_2)=3$, which also hold for boundary points of Teichm\"uller curves in this stratum. We then obtain the Harder--Narasimhan filtration for the invariant part of the Hodge bundle as follows: 
$$0=f_*\omega_{S/C}(-4D_1-D_2)\subset f_*\omega_{S/C}(-3D_1-D_2)= f_*\omega_{S/C}(-2D_1-D_2)\subset $$
$$\subset f_*\omega_{S/C}(-2D_1)\subset f_*\omega_{S/C}(-D_1)\subset  f_*\omega_{S/C}.$$
The corresponding exponents of this filtration are given by
 $$w_1^+=\frac{8}{11},\quad w_2^+=\frac{2}{5},\quad w_3^+=\frac{4}{11},\quad w_4^+=\frac{2}{11}.$$ Similarly the Harder--Narasimhan filtration for the anti-invariant part of the Hodge bundle is as follows:
$$0\subset\mathcal{L}\otimes f_*\mathcal{O}_S(2D_1+D_2)\subset\mathcal{L}\otimes f_*\mathcal{O}_S(3D_1+D_2)=\mathcal{L}\otimes f_*\mathcal{O}_S(4D_1+D_2)\subset $$
$$\subset \mathcal{L}\otimes f_*\mathcal{O}_S(4D_1+2D_2)\subset \mathcal{L}\otimes f_*\mathcal{O}_S(5D_1+2D_2)=f_*\omega_{S/C}(D).$$
 The corresponding exponents of this filtration are given by 
 $$w_1^-=1,\quad w_2^-=1-\frac{6}{11},\quad w_3^-=1-\frac{4}{5},\quad w_4^-=1-\frac{10}{11}.$$
\end{proof}

\begin{remark}
\label{rem:irregular}
The Hodge bundle on the irregular strata does not necessarily split into a direct sum of line bundles. For example for $\mathcal{Q}(9,-1)^{\rm irr}$, we have $h^0(3p_1) = 2\neq 1$, hence Proposition~\ref{prop:split-teich} is not applicable in this case. However, for these irregular strata we can still obtain partial splitting results. For example for $\TQ = \TQ(9,-1)^{\rm irr}$, we have $h^1(K-2p_1) = h^1(K) = 1$, hence $f_{*}\omega_{S/\TQ} / f_{*}\omega_{S/\TQ}(-2D_1) = f_{*} \mathcal O_{2D_1}(\omega_{S/\TQ})$ splits as a direct sum of two line bundles. 
\end{remark}

\begin{table}\label{irr}
\small
$$
\begin{array}{|c|c|c|c|c|c||c|c|c|}

\hline
&&\multicolumn{4}{|c||}{}&\multicolumn{3}{|c|}{}\\

\multicolumn{1}{|c|}{\text{Degrees}}&
\multicolumn{1}{|c|}{\text{Con-}}&
\multicolumn{4}{|c||}{\text{Invariant }}&
\multicolumn{3}{|c|}{\text{Anti-invariant}}
\\

\multicolumn{1}{|c|}{\text{of }}&
\multicolumn{1}{|c|}{\text{nected}}&
\multicolumn{4}{|c||}{\text{part}}&
\multicolumn{3}{|c|}{\text{part}}
\\
\cline{3-9}

\multicolumn{1}{|c|}{\text{zeros}}&
\multicolumn{1}{|c|}{\text{comp.}}&
\multicolumn{1}{|c|}{w_1^+}&
\multicolumn{1}{|c|}{w_2^+}&
\multicolumn{1}{|c|}{w_3^+}&
\multicolumn{1}{|c||}{w_4^+}&
\multicolumn{1}{|c|}{w_2^-}&
\multicolumn{1}{|c|}{w_3^-}&
\multicolumn{1}{|c|}{w_4^-}
\\
[-\halfbls] &&&&&&&&\\
\hline &&&&&&&& \\ [-\halfbls]
(9,-1) & {\rm Irr} &
\frac{8}{11} & \frac{4}{11} & \frac{2}{11} & -
&\frac{5}{11} & \frac{1}{11} & -\\
[-\halfbls] &&&&&&&&\\
\hline &&&&&&&& \\ [-\halfbls]
(6,3,-1) & {\rm Irr} &
 \frac{3}{4} & \frac{2}{5} &  \frac{1}{4} & - & \frac{1}{2} & \frac{1}{5} & -
\\
[-\halfbls] &&&&&&&&\\
\hline &&&&&&&& \\ [-\halfbls]
(12) & {\rm Irr} &
\frac{5}{7} &  \frac{3}{7} &  \frac{2}{7} &  \frac{1}{7} &  \frac{3}{7} &  \frac{1}{7} & -
\\
[-\halfbls] &&&&&&&&\\
\hline &&&&&&&& \\ [-\halfbls]
(9,3) & {\rm Irr} &
 \frac{8}{11} & \frac{2}{5} &  \frac{4}{11} &  \frac{2}{11} & \frac{5}{11} & \frac{1}{5} & \frac{1}{11}  
\\
[-\halfbls] &&&&&&&&\\ \hline
\end{array}
$$
\caption{Non-varying irregular strata}
\end{table}

\subsection{Genus one to four}
We have treated the cases of genus zero and the irregular strata. In this section we will prove for the other strata mentioned in Theorem~\ref{main} (1). 
In what follows we will often refer to the dimension of a linear system that has been computed in \cite{CM2}, which indeed holds for all points in the partial compactification $\TQ$ of the respective strata.  

\subsubsection{Genus one} For the case $g=1$, we have $h^0(p_1)=1$. Consider first the stratum $\mathcal{Q}(n,-1^n)$. Then we can apply Proposition \ref{prop:split-teich} with $k_1=[\frac{n+1}{2}]$, $k_i=0$ for $i\geq 2$, $a_1=1$, and $a_i=0$ for $i\geq 2$, which implies the following expressions:
$$f_*\omega_{S/C}=f_*\mathcal{O}_{D_1}(\omega_{S/C}),$$
$$f_*\omega_{S/C}(D)= \mathcal{L}\oplus \big(\mathcal{L}\otimes f_*\mathcal{O}_{(k_1-1)D_1}(k_1D_1)\big) =\mathcal L \oplus\left(\bigoplus^{k_1}_{i=2}\big(\mathcal{L}\otimes f_*\mathcal{O}_{D_1}(iD_1)\big)\right).$$
From these expressions we obtain that 
$$w_1^+=\frac{2}{n+2}.
$$ 
Furthermore, $w_1^-=1$ and $w_i^-$ for $i\geq 2$ are given by the $(i-1)$-st largest number in the set
$$\left\{1-\frac{2k}{n+2}\ |\ 2<2k\leq n+1\right\}.$$

Next consider the stratum $\mathcal{Q}(n,1,-1^{n+1})$. We can apply Proposition \ref{prop:split-teich} with $k_1=[\frac{n+1}{2}]$, $k_2=1$, $k_i=0$ for $i\geq 3$, $a_1=1$, and $a_i=0$ for $i\geq 2$, which implies the following expressions:
$$f_*\omega_{S/C}=f_*\mathcal{O}_{D_1}(\omega_{S/C}),$$
 \begin{eqnarray*}f_*\omega_{S/C}(D) &= & \mathcal{L}\oplus \big(\mathcal{L}\otimes f_*\mathcal{O}_{(k-1)D_1}(kD_1)\big)\oplus \big(\mathcal{L}\otimes f_*\mathcal{O}_{D_2}(D_2)\big) \\
 & = & \mathcal{L}\oplus\left(\bigoplus^{k}_{i=2} \big(\mathcal{L}\otimes f_*\mathcal{O}_{D_1}(iD_1)\big)\right)\oplus \big(\mathcal{L}\otimes f_*\mathcal{O}_{D_2}(D_2)\big).
 \end{eqnarray*}
 From these expressions we obtain that 
 $$w_1^+=\frac{2}{n+2}.
 $$ 
 Furthermore, $w_1^-=1$ and $w_i^-$ for $i\geq 2$ are given by the $(i-1)$-st largest number in the set 
$$\frac{1}{3}, \ \left\{1-\frac{2k}{n+2}\ |\ 2<2k\leq n+1\right\}.$$

As an application we can study the asymptotic behavior of Lyapunov exponents in these non-varying strata of genus one.  

\begin{corollary}
\label{as} Given any positive integer $m$, 
for the strata $\mathcal{Q}(n,-1^n)$ and  $\mathcal{Q}(n,1,-1^{n+1})$ we have $$\lim_{n\to\infty}\lambda_m^-=1. $$ 
\end{corollary}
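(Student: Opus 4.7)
The plan is to derive the limit directly from the explicit formulas for $w_i^-$ just computed, combined with the inequality \eqref{lw} comparing the Harder--Narasimhan polygon to the Lyapunov polygon and the a priori bound $\lambda_i^- \leq \lambda_1^- = 1$. For both strata one has $w_1^- = 1$, while for $i \geq 2$ the value $w_i^-$ is the $(i-1)$-st largest element of the set $\{1 - 2k/(n+2) : 4 \leq 2k \leq n+1\}$, or this set augmented by the single extra element $1/3$ in the case of $\mathcal{Q}(n, 1, -1^{n+1})$.

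First I would observe that, for a fixed target index $m$, as soon as $n > 3m - 2$ the stray value $1/3$ drops strictly below $1 - 2m/(n+2)$ and hence does not enter the top $m-1$ largest values in either set. Consequently, for $n$ sufficiently large and every $i$ with $2 \leq i \leq m$, the $(i-1)$-st largest element is precisely $1 - 2i/(n+2)$ in both strata; in particular $w_i^- \to 1$ as $n \to \infty$ for each fixed $i$.

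Summing the first $m$ exponents and applying \eqref{lw} with $k = m$ then yields
$$\sum_{i=1}^m \lambda_i^- \;\geq\; \sum_{i=1}^m w_i^- \;=\; m \;-\; \frac{m(m+1) - 2}{n+2},$$
whose right-hand side tends to $m$ as $n \to \infty$ with $m$ fixed. On the other hand, $1 = \lambda_1^- \geq \lambda_2^- \geq \cdots \geq \lambda_m^-$ implies that each term is at most $1$, so the sum of the first $m$ exponents is at most $m$. The resulting sandwich forces $\lambda_i^- \to 1$ for every $i \leq m$, and in particular for $i = m$, which is the claim.

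There is no substantive obstacle: once the Weierstrass exponents have been identified in the preceding paragraphs, the corollary is a direct consequence of the polygon comparison. The only minor point of care is the uniform threshold $n > 3m - 2$ needed to guarantee that the spurious value $1/3$ in the second stratum does not perturb the top of the polygon, and this is immediate.
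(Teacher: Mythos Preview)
Your argument is essentially the same as the paper's: compute the $w_i^-$, invoke the polygon inequality \eqref{lw}, and sandwich. The paper illustrates with $m=2$ (deducing $\lambda_2^- \geq w_2^- = (n-2)/(n+2)$ directly from $\lambda_1^- = w_1^- = 1$) and then says the general case is ``completely analogous''; your summed version is exactly that analogous argument, and your bookkeeping with the stray $1/3$ is correct.

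There is, however, one genuine omission. The Weierstrass exponents $w_i^-$ and the inequality \eqref{lw} are statements about individual Teichm\"uller curves, whereas the $\lambda_m^-$ in the corollary refer to the Lyapunov exponents of the stratum itself (for the Masur--Veech measure), in line with the Kontsevich--Zorich conjecture discussed in the introduction. Your sandwich therefore only establishes $\lim_{n\to\infty}\lambda_m^-(C)=1$ uniformly over Teichm\"uller curves $C$. The paper closes this gap with an extra sentence: since Teichm\"uller curves are dense in each stratum, the result of \cite{BEW} transfers the lower bound $\lambda_m^- \geq w_m^-$ (or more precisely the bound on the partial sums) from Teichm\"uller curves to the stratum. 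You should insert this step; without it the conclusion for the stratum does not follow.
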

\begin{proof} 
For $n\geq 4$, all Teichm\"uller curves in $\mathcal{Q}(n,-1^n)$ and $\mathcal{Q}(n,1,-1^{n+1})$ have 
$$w_2^-=\frac{n-2}{n+2}.$$ 
By the inequality \eqref{lw} and the fact that $\lambda_1^- = w_1^- = 1$, the Lyapunov exponents of all Teichm\"uller curves in these strata satisfy that 
$$\lambda_2^-\geq w^-_2=\frac{n-2}{n+2}.$$
Since the union of Teichm\"uller curves is dense in every stratum, by \cite{BEW} the Lyapunov exponents of these strata satisfy the same inequality  
$$\lambda_2^-\geq\frac{n-2}{n+2}.$$ 
Therefore, we conclude that  
$$1\geq\lim_{n\to\infty}\lambda_2^-\geq \lim_{n\to\infty}\frac{n-2}{n+2}=1.$$
The proof for $\lambda_m^-$ with a general given value of $m$ is completely analogous.
\end{proof}

\subsubsection{Genus two} For the case $g=2$, we demonstrate by studying the stratum $\mathcal{Q}(7,-1,-1,-1)$ in detail.  
In this case $h^0(2p_1)=1$ and we can apply Proposition \ref{prop:split-teich} with $k_1=4$, $k_2=k_3=k_4=0$, $a_1=2$, and $a_2=a_3=a_4=0$. We then obtain the following expressions: 
$$f_*\omega_{S/C} =f_*\mathcal{O}_{D_1}(\omega_{S/C})\oplus f_*\mathcal{O}_{D_1}(\omega_{S/C}-D_1),$$
\begin{eqnarray*}
f_*\omega_{S/C}(D) & = & \mathcal{L}\oplus\big(\mathcal{L}\otimes f_*\mathcal{O}_{2D_1}(4D_1)\big) \\
& = & \mathcal{L}\oplus \big(\mathcal{L}\otimes f_*\mathcal{O}_{D_1}(3D_1)\big)\oplus\big(\mathcal{L}\otimes f_*\mathcal{O}_{D_1}(4D_1)\big).
\end{eqnarray*}
From these expressions we obtain that 
$$w_1^+=\frac{4}{9},\quad w_2^+=\frac{2}{9}.
$$ 
$$ w_1^-=1,\quad w_2^-=1-\frac{6}{9},\quad w_3^-=1-\frac{8}{9}. 
$$ 
The computations for the strata $\mathcal{Q}(5,-1)$, $\mathcal{Q}(6,-1,-1)^{\nonhyp}$, and $\mathcal{Q}(5,1,-1,-1)$ can be carried out in the same manner and we skip the details. 

For the remaining strata $\mathcal{Q}(3,2,-1)$, $\mathcal{Q}(2,2,1,-1)$, $\mathcal{Q}(4,1,-1)$, $\mathcal{Q}(4,2,-1,-1)$, $\mathcal{Q}(3,1,1,-1)$, $\mathcal{Q}(3,3,-1,-1)^{\text{non-hyp}}$,  $\mathcal{Q}(3,2,1,-1,-1)$, and $\mathcal{Q}(4,3,-1,-1,-1)$, we can use $h^0(p_1+p_2)=1$ and perform the computations analogously. 

\subsubsection{Genus three} For the case $g=3$, we demonstrate by studying the stratum $\mathcal{Q}(4,3,2,-1)$ in detail. 
In this case $h^0(p_1+p_2+p_3)=1$ and we can apply Proposition \ref{prop:split-teich} with $k_1=k_2=2$, $k_3=1$, $k_4=0$, $a_1=a_2=a_3=1$, and $a_4=0$. We then obtain the following expressions:
$$f_*\omega_{S/C}=f_*\mathcal{O}_{D_1}(\omega_{S/C})\oplus f_*\mathcal{O}_{D_2}(\omega_{S/C})\oplus f_*\mathcal{O}_{D_3}(\omega_{S/C}),$$
$$f_*\omega_{S/C}(D)=\mathcal{L}\oplus \big(\mathcal{L}\otimes f_*\mathcal{O}_{D_1}(2D_1)\big)\oplus \big(\mathcal{L}\otimes f_*\mathcal{O}_{D_2}(2D_2)\big).$$ 
From these expressions we obtain that 
$$w_1^+=\frac{2}{4},\quad w_2^+=\frac{2}{5},\quad w_3^+=\frac{2}{6}, $$  
$$w_1^-=1,\quad w_2^-=1-\frac{4}{6},\quad w_3^-=1-\frac{4}{5}. $$ 
The computations for the strata $\mathcal{Q}(4,2,2)$, $\mathcal{Q}(3,3,2)^{\nonhyp}$, $\mathcal{Q}(3,3,1,1)^{\nonhyp}$, $\mathcal{Q}(4,3,1)$,  $\mathcal{Q}(3,2,2,1)$, and $\mathcal{Q}(3,3,3,-1)^{\rm reg}$ can be carried out similarly.

For the strata $\mathcal{Q}(6,2)^{\nonhyp}$, $\mathcal{Q}(4,4)$, $\mathcal{Q}(5,3)$, $\mathcal{Q}(5,2,1)$, $\mathcal{Q}(6,1,1)^{\nonhyp}$, $\mathcal{Q}(5,4,-1)$, $\mathcal{Q}(5,3,1,-1)$, $\mathcal{Q}(7,1)$, $\mathcal{Q}(7,2,-1)$, $\mathcal{Q}(7,3,-1,-1)$, and $\mathcal{Q}(6,3,-1)^{\rm reg}$, we can use $h^0(2p_1+p_2) = 1$. For the strata $\mathcal{Q}(9,-1)^{\rm reg}$, $\mathcal{Q}(8)$, $\mathcal{Q}(8,1,-1)$, and $\mathcal{Q}(10,-1,-1)^{\nonhyp}$, we can use $h^0(3p_1)=1$. 

\subsubsection{Genus four} For the case $g=4$, we demonstrate by studying the stratum $\mathcal{Q}(5,4,3)$ in detail. In this case $h^0(2p_1+p_2+p_3)=1$ and we can apply Proposition \ref{prop:split-teich} with $k_1=3$, $k_2=k_3=2$, $a_1=2$, and $a_2=a_3=1$. We then obtain the following expressions:
$$f_*\omega_{S/C}=f_*\mathcal{O}_{D_1}(\omega_{S/C})\oplus f_*\mathcal{O}_{D_1}(\omega_{S/C}-D_1)\oplus f_*\mathcal{O}_{D_2}(\omega_{S/C})\oplus f_*\mathcal{O}_{D_3}(\omega_{S/C}),$$ 
$$f_*\omega_{S/C}(D)=\mathcal{L}\oplus\big( \mathcal{L}\otimes f_*\mathcal{O}_{D_1}(3D_1)\big)\oplus \big(\mathcal{L}\otimes f_*\mathcal{O}_{D_2}(2D_2)\big)\oplus\big( \mathcal{L}\otimes f_*\mathcal{O}_{D_3}(2D_3)\big).$$
From these expressions we obtain that 
$$w_1^+=\frac{4}{7},\quad w_2^+=\frac{2}{5},\quad w_3^+=\frac{2}{6},\quad w_4^+=\frac{2}{7}, $$ 
$$w_1^-=1,\quad w_2^-=1-\frac{4}{6},\quad w_3^-=1-\frac{4}{5},\quad w_4^-=1-\frac{6}{7}.$$ 
The computations for the strata $\mathcal{Q}(7,3,2)$ and $\mathcal{Q}(6,3,3)^{\rm reg}$ can be carried out similarly.

For the strata $\mathcal{Q}(13,-1)$, $\mathcal{Q}(11,1)$, and $\mathcal{Q}(12)^{\rm reg}$ we can use $h^0(4p_1)=1$. For the strata $\mathcal{Q}(8,3,1)$, $\mathcal{Q}(10,2)^{\nonhyp}$, $\mathcal{Q}(8,4)$, $\mathcal{Q}(10,2)^{\nonhyp}$, and $\mathcal{Q}(9,3)^{\rm reg}$ we can use $h^0(3p_1+p_2)=1$. For the strata $\mathcal{Q}(7,5)$ and $\mathcal{Q}(6,6)^{\rm reg}$ we can use $h^0(2p_1+2p_2)=1$. For the stratum $\mathcal{Q}(3,3,3,3)^{\rm reg}$ we can use $h^0(p_1+p_2+p_3+p_4)=1$.  

\subsubsection{Summary} The following result holds for each of the above strata in genus one to four (with the exception of the irregular strata treated in the previous section). For $i=1, \ldots, g$, the exponent $w_i^+$ is the $(g-i+1)$-st smallest number in the set 
$$\left\{\frac{2k}{d_j+2}\ \mid\ 0< 2k\leq d_j+1,\ j=1,\ldots,n\right\}.$$ 
Moreover, $w_1^-=1$ and $w_i^-$ for $i=2,\ldots,g_{\text{eff}}$ is the $(g_{\text{eff}}-i+1)$-st smallest number in the set
    $$\left\{1-\frac{2k}{d_j+2}\ \mid\ 0<2k\leq d_j+1,\ j=1,\ldots,n\right\}.$$
Finally in all these cases the Hodge bundle on Teichm\"uller curves splits into a direct sum of line bundles by using Proposition~\ref{prop:split-teich}.
Indeed the direct sum splitting of the Hodge bundle holds more generally for each of these strata by using Proposition~\ref{prop:split}. 

\subsection{Non-varying strata of Abelian differentials} For the sake of completeness we also consider the non-varying strata of Abelian differentials appearing in \cite{CM1}. The Harder--Narasimhan filtration and Weierstrass exponents of the Hodge bundle for these non-varying strata were determined in \cite{YZ1}. Here we focus on the splitting of the Hodge bundle.  

\begin{theorem}
\label{thm:abelian-splitting}
The Hodge bundle on any non-varying stratum of Abelian differentials in \cite{CM1} that is not a spin component of even parity splits as a direct sum of line bundles. 
\end{theorem}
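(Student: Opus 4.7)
The strategy is to transport the splitting machinery developed for quadratic differentials in Section 3 to the simpler setting of Abelian strata, where there is no canonical double cover to worry about. First I would establish an Abelian analogue of Proposition \ref{prop:split}: if the stratum $\mathcal{H}(d_1,\ldots,d_n)$ admits nonnegative integers $a_j$ with $a_j \leq d_j$, $\sum_j a_j = g$, and $h^0\bigl(\sum a_j p_j\bigr) = 1$ on every fiber of $f\colon S \to B$, then
$$f_*\omega_{S/B} \;=\; \bigoplus_j f_*\mathcal{O}_{a_j D_j}(\omega_{S/B}) \;=\; \bigoplus_j\bigoplus_{i=0}^{a_j-1} f_*\mathcal{O}_{D_j}(\omega_{S/B}-iD_j).$$
The proof mirrors that of Proposition \ref{prop:split} line for line: Riemann--Roch upgrades $h^0(\sum a_j p_j) = 1$ with $\sum a_j = g$ to $h^0(K - \sum a_j p_j) = 0$, so $f_*\omega_{S/B}(-\sum a_j D_j) = 0$; Lemma \ref{sp2} (using disjointness of the sections $D_j$) then yields the first decomposition, and Lemma \ref{sp1} breaks each $f_*\mathcal{O}_{a_j D_j}(\omega_{S/B})$ into $a_j$ line bundles.

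The bulk of the proof is then a case-by-case exhibition of such $a_j$ for every non-varying stratum listed in \cite{CM1} other than the even spin components. For the hyperelliptic strata $\mathcal{H}(2g-2)^{\rm hyp}$ and $\mathcal{H}(g-1,g-1)^{\rm hyp}$ the splitting is already subsumed by Proposition \ref{hy} via the canonical double cover. For odd spin components $\mathcal{H}(2k_1,\ldots,2k_n)^{\rm odd}$ the natural choice is $a_j = k_j$, since $\sum k_j p_j$ is an odd theta characteristic and so generically satisfies $h^0 = 1$; constancy along the family (including the boundary cusps of the partial compactification) is verified by the same linear-system analysis used in Sections 3--4 and recorded in \cite{CM1,CM2}. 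For the remaining non-spin, non-hyperelliptic non-varying strata, the dimension counts already performed in \cite{CM1} directly suggest a partition $\{a_j\}$ of $g$ meeting the hypothesis; one then checks that the resulting $h^0$ stays equal to $1$ on every fiber, just as in each subcase of Section~4 for quadratic strata.

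The main obstacle, and the reason for excluding even spin components, is precisely the failure of the hypothesis $h^0(\sum a_j p_j) = 1$. On an even spin component of $\mathcal{H}(2k_1,\ldots,2k_n)$ the effective theta characteristic $\sum k_j p_j$ has $h^0 \geq 2$ identically, and no alternative partition of $g$ into integers $a_j \leq 2k_j$ can cut the global sections down to one while remaining constant across the entire stratum. The technical point requiring care in every other case will therefore be verifying that the generic constancy $h^0(\sum a_j p_j) = 1$ persists to every boundary point of the partial compactification; but this is a direct translation of the fiberwise computations already carried out for the corresponding quadratic strata and so should proceed uniformly once the Abelian splitting lemma above is in place.
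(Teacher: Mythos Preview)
Your Abelian splitting lemma requires $\sum_j a_j = g$, but this hypothesis is unattainable for most of the strata you need. Take $\HH(3,1)$ in genus $3$: any admissible choice with $\sum a_j = 3$ is $(a_1,a_2)=(3,0)$ or $(2,1)$, and in either case $K-\sum a_j p_j$ is a single point, so Riemann--Roch forces $h^0(\sum a_j p_j)=2$, never $1$. The same obstruction hits every odd spin stratum $\HH(2k_1,\ldots,2k_n)^{\rm odd}$: your suggested choice $a_j=k_j$ satisfies $\sum k_j = g-1$, not $g$, contradicting your own lemma; and raising any coordinate by one makes $\sum a_j p_j$ equivalent to the theta characteristic plus a base point, again with $h^0\geq 2$. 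So the lemma as stated is never applicable outside the hyperelliptic cases you defer to Proposition~\ref{hy}.

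The paper's repair is to ask only for $\sum a_j = g-1$. Then $h^0(\sum a_j p_j)=1$ implies (via \cite[Lemma~A.2]{CM2}) that $f_*\omega_{S/B}(-\sum a_j D_j)=\mathcal{L}$, and Lemma~\ref{sp2} splits the quotient $f_*\omega_{S/B}/\mathcal{L}$ into $g-1$ line bundles. The extra ingredient your outline omits is that $\mathcal{L}$ is already known to be a \emph{direct summand} of the Hodge bundle on an Abelian stratum (the maximal Higgs piece), so the resulting short exact sequence splits and one obtains the full decomposition. With this correction the case-by-case verification goes through with $(a_1,a_2)=(1,1)$ for $\HH(3,1)$ and $\HH(2,2)^{\rm odd}$, $a_1=2$ for $\HH(4)^{\rm odd}$, and so on.
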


\begin{proof}
The proof is analogous to the preceding cases. Since we have treated the hyperelliptic strata of Abelian differentials, below we consider strata that are non-hyperelliptic and are not of even spin parity. Since the Hodge bundle contains the tautological line bundle $\mathcal L$ as a direct summand, by using the idea of  Proposition~\ref{prop:split} it suffices to find integers $a_j \geq 0$ such that $\sum a_j = g-1$ and $h^0(\sum a_j p_j) = 1$ for every Abelian differential in a concerned stratum. For $\HH(4)^{\rm odd}$ we can take $a_1 = 2$. For $\HH(3,1)$ and $\HH(2,2)^{\rm odd}$ we can take $a_1 = a_2 = 1$. For $\HH(2,1,1)$ we can take $a_1 = a_2 = 1$ and $a_3 = 0$. For $\HH(6)^{\rm odd}$ we can take $a_1 = 3$. 
For $\HH(5,1)$, $\HH(4,2)^{\rm odd}$, and $\HH(3,3)^{\operatorname{non-hyp}}$ we can take $a_1 = 2$ and $a_2 = 1$. For $\HH(2,2,2)^{\rm odd}$ and $\HH(3,2,1)$ we can take $a_1 = a_2 = a_3 = 1$. For $\HH(8)^{\rm odd}$ we can take $a_1 = 4$. Finally for $\HH(6,2)^{\rm odd}$ and $\HH(5,3)$ we can take $a_1 = 3$ and $a_2 = 1$. 
\end{proof}

\section{Hyperelliptic strata of quadratic differentials}
In this section we prove Theorem~\ref{main} (3) for the case of the hyperelliptic strata of quadratic differentials. To distinguish it from the hyperelliptic loci of Abelian differentials, instead of considering the canonical double cover, we start with $(\mathbb P^1, q)$ belonging to a stratum of quadratic differentials in genus zero, specify the topology of a hyperelliptic double cover $Y\to \mathbb P^1$ with branch points located at some zeros and poles of $q$, and then pull back $q$ to obtain a quadratic differential $(Y, q_Y)$. The main result established in \cite{La} asserts that the dimensions of the strata containing $(\mathbb P^1, q)$ and $(Y, q_Y)$ are only equal in the following three cases: 

\begin{itemize}
\item[1)] The hyperelliptic stratum $\mathcal{Q}_g(4(g-k)-6,4k+2)^{\hyp}$ is obtained by pulling back quadratic differentials from the stratum $\mathcal{Q}_0(2(g-k)-4,2k,-1^{2g})$, where $k\geq 0$, $g\geq 2$, and $g-k\geq 2$. The corresponding double cover is ramified over all the singularities. 
\item[2)] The hyperelliptic stratum $\mathcal{Q}_g(2(g-k)-3,2(g-k)-3,4k+2)^{\hyp}$ is  obtained by pulling back quadratic differentials from the stratum $\mathcal{Q}_0(2(g-k)-3,2k,-1^{2g+1})$, where $k\geq 0$, $g\geq 1$, and $g-k \geq 1$. The corresponding double cover is ramified over the $2g+1$ poles and over the zero of order $2k$.
 \item[3)] The hyperelliptic stratum $\mathcal{Q}_g(2(g-k)-3,2(g-k)-3,2k+1,2k+1)^{\hyp}$, is obtained by pulling back quadratic differentials from the stratum $\mathcal{Q}_0(2(g-k)-3,2k+1,-1^{2g+2})$, where $k\geq -1$, $g\geq 1$, and $g-k\geq 2$. The corresponding double cover is ramified over the $2g+2$ poles.
\end{itemize}

Consider the following diagram
\begin{eqnarray}
\label{eq:triple}
\mathcal{Q}_0(\ldots,d^{\rm o}_i,\ldots,d^{\rm o}_j,\ldots)\overset{\psi}{\longrightarrow} \mathcal{Q}_g(\ldots,d^{\rm o}_i,d^{\rm o}_i,\ldots,2d^{\rm o}_j+2,\ldots)\overset{\phi}{\longrightarrow} \mathcal{H}_{g+g_{\rm eff}}(\cdots) 
\end{eqnarray}
where $\psi$ is induced by the above hyperelliptic double cover, $\phi$ is induced by the canonical double cover, and both can extend to the partial compactification $\TQ$. Let $B^{\rm o}$ be the branch locus of the hyperelliptic double cover where $B^{\rm o}$ is contained in the union of the singularities of $q$. Denote by $D^{\rm o}_i \not\subset B^{\rm o}$ and $D^{\rm o}_j\subset B^{\rm o}$. We thus obtain three universal families, denoted by $f^{\rm o}$, $f'$, and $f$ respectively, in the following commutative diagram: 
$$\xymatrix{
    S^{\rm o}  \ar[dr]_{f^{\rm o}}
                & \ar[l]_{\theta} S \ar[d]^{f}  &  \ar[l]_{\sigma}  S' \ar[dl]^{f'}    \\
                & \TQ                }
$$
where we identify the images of $\psi$ and $\phi \circ \psi$, both denoted by $\TQ$.  

As before there exists a Cartier divisor class $D^{\rm o}$ in $S^{\rm o}$ such that $B^{\rm o} =2D^{\rm o}$, where 
the family of the hyperelliptic double covers $\theta\colon S\rightarrow S^{\rm o}$ is ramified exactly over $B^{\rm o}$. Let $D_j$ be the section of $f\colon S\rightarrow \TQ$ over $D^{\rm o}_j$ if $D^{\rm o}_j$ lies in $B^{\rm o}$, and let $D_{j,1}$ and $D_{j,2}$ be the sections over $D^{\rm o}_j$ if $D^{\rm o}_j$ does not lie in $B^{\rm o}$. In the case when $D^{\rm o}_j$ lies in $B^{\rm o}$, we have 
$$\theta_*D_j=D^{\rm o}_j, \quad \theta^*D^{\rm o}_j=2D_j. $$
In the case when $D_j^{\rm o}$ does not lie in $B^{\rm o}$, we have 
$$\theta_*(D_{j,1}+D_{j,2})=2D^{\rm o}_j, \quad \theta^*D^{\rm o}_j=D_{j,1}+D_{j,2}. $$

Recall that $\mathcal{F}$ and $\mathcal L$ are the tautological line bundles on $\TQ$ corresponding to the generating quadratic differentials and Abelian differentials respectively, where $\mathcal{F} = \mathcal L^2$. The relative canonical bundle class for the family $f^{\rm o}\colon S^{\rm o}\rightarrow \TQ$ satisfies that  
$$\omega^2_{S^{\rm o}/\TQ}=(f^{\rm o})^*\mathcal{F}\otimes \mathcal{O}_{S^{\rm o}}\left(\sum d^{\rm o}_jD^{\rm o}_j\right). $$
 The relative canonical bundle class for the family $f\colon S\rightarrow \TQ$ satisfies that  
$$\omega_{S/\TQ}^2=f^*\mathcal{F}\otimes \mathcal{O}_{S}\left(\sum_{
   D^{\rm o}_j \not\subset B
  }d^{\rm o}_j(D_{j,1}+D_{j,2})+\sum_{
   D^{\rm o}_j \subset B
  }2(d^{\rm o}_j+1)D_j\right)=f^*\mathcal{F}\otimes \mathcal{O}_S\left(\sum d_jD_j\right).$$

Since as a rational curve fibration $\omega_{S^{\rm o}/\TQ}$ has relative degree $-2$, we have 
$$ f^{\rm o}_*\omega_{S^{\rm o}/\TQ} = 0. $$
It follows that 
\begin{eqnarray}
\label{eq:o-invariant}
f_*\omega_{S/\TQ}  =  f^{\rm o}_*\omega_{S^{\rm o}/\TQ}\oplus f^{\rm o}_*\omega_{S^{\rm o}/\TQ}(D^{\rm o}) 
 =  f^{\rm o}_*\omega_{S^{\rm o}/\TQ}(D^{\rm o})
\end{eqnarray}
and hence 
\begin{eqnarray*}
\text{grad}({\rm HN}(f_*\omega_{S/\TQ})) =  \text{grad}({\rm HN}(f^{\rm o}_*\omega_{S^{\rm o}/\TQ}(D^{\rm o}))).
\end{eqnarray*}

Finally restricting to a Teichm\"uller curve $C$ in $\TQ$, we have the self-intersection number 
$$(D^{\rm o}_j)^2 = - \frac{\chi}{d^{\rm o}_j+2}$$
in $S^{\rm o}$ over $C$, where $\chi$ is the orbifold Euler characteristic of $C$. 

In what follows we first treat the anti-invariant part of the Hodge bundle. As before we introduce and use the notation 
\begin{eqnarray*}
k_i^{\rm o} = \left[\frac{d_i^{\rm o} + 1}{2}\right]. 
\end{eqnarray*}

\begin{theorem}\label{hylc} Let $\TQ$ be the hyperelliptic locus of quadratic differentials in genus $g$ induced from the stratum $\TQ_0(\ldots,d^{\rm o}_i,\ldots,d^{\rm o}_j,\ldots)$. Suppose the following condition holds for the generic fiber of the universal family $S$ over $\TQ$: 
$$h^0\left(\sum_{D^{\rm o}_i\not\subset B^{\rm o}} k_i^{\rm o}(p_{i,1}+p_{i,2})+\sum_{D^{\rm o}_j\subset B^{\rm o}}(d^{\rm o}_j+1)p_j\right)=h^0\left(\sum_{D^{\rm o}_i\not\subset B^{\rm o}} k_i^{\rm o}(p_{i,1}+p_{i,2})+\sum_{D^{\rm o}_j\subset B^{\rm o}}k_j^{\rm o}(2p_j)\right)$$ 
where $p_j$ is the corresponding ramified Weierstrass point in the hyperelliptic double cover and $p_{i,1}, p_{i,2}$ are the corresponding unramified conjugate points. Moreover, suppose 
$$\sum k_i^{\rm o}\leq g.$$
Then the anti-invariant part of the Hodge bundle over $\TQ$ splits into a direct sum of line bundles: 
$$f_*\omega_{S/\TQ}(D)=\mathcal{L}\oplus\left(\bigoplus_i\bigoplus^{k_i^{\rm o}}_{j=1} \big(\mathcal{L}\otimes f^{\rm o}_*\mathcal{O}_{D^{\rm o}_i}(jD^{\rm o}_i)\big)\right).$$

If the above assumptions on $\TQ$ hold for a Teichm\"uller curve $C$ in $\TQ$, then the same splitting of $f_*\omega_{S/C}(D)$ holds and coincides with the direct sum of graded quotients of the Harder--Naramsihan filtration, whose normalized Weierstrass exponents are given by 
$w_1^-=1$ and $w_i^-$ for $i\geq 2$ as the $(i-1)$-st largest number in the set 
$$\bigcup_i\left\{1-\frac{2l}{d^{\rm o}_i+2}\ \mid \ 0<l\leq \Big[\frac{d^{\rm o}_i+1}{2}\Big]\right\}.$$
\end{theorem}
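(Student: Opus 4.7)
The plan is to compute $f_*\omega_{S/\TQ}(D)$ by factoring $f=f^{\rm o}\circ\theta$ and pushing through the hyperelliptic double cover $\theta$ first. By Proposition \ref{prop:omega(D)}, $\omega_{S/\TQ}(D)=f^*\mathcal{L}\otimes\mathcal{O}_S(\sum k_jD_j)$. A case-by-case comparison of the contributions upstairs---unramified $D_i^{\rm o}$ yields $\theta^*(k_i^{\rm o}D_i^{\rm o})$, ramified $D_j^{\rm o}$ with $d_j^{\rm o}$ odd yields $k_j^{\rm o}\theta^*D_j^{\rm o}$, and ramified $D_j^{\rm o}$ with $d_j^{\rm o}$ even yields $k_j^{\rm o}\theta^*D_j^{\rm o}+D_j$---gives the identity $\sum k_jD_j=\theta^*M+E$, where $M=\sum_i k_i^{\rm o}D_i^{\rm o}$ and $E=\sum_{j\in J}D_j$ is supported on the ramification components corresponding to the branched even-order singularities (indexed by $J$). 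A local calculation near each $D_j$ with $j\in J$ using the quadratic coordinate $t=s^2$ shows $\theta_*\mathcal{O}_S(E)=\mathcal{O}_{S^{\rm o}}\oplus\mathcal{O}_{S^{\rm o}}(-D^{\rm o}+\sum_{j\in J}D_j^{\rm o})$, extending the familiar $\theta_*\mathcal{O}_S=\mathcal{O}_{S^{\rm o}}\oplus\mathcal{O}_{S^{\rm o}}(-D^{\rm o})$, and the projection formula then yields
\[\theta_*\mathcal{O}_S\bigl(\sum k_jD_j\bigr)=\mathcal{O}_{S^{\rm o}}(M)\oplus\mathcal{O}_{S^{\rm o}}\bigl(M-D^{\rm o}+\sum_{j\in J}D_j^{\rm o}\bigr).\]
Applying $f^{\rm o}_*$ and tensoring by $\mathcal{L}$ decomposes $f_*\omega_{S/\TQ}(D)$ into two pushforwards from the rational fibration $f^{\rm o}$.

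Next I eliminate the second summand. On each rational fiber, the decomposition $\theta_*\mathcal{O}_Y=\mathcal{O}_{\mathbb{P}^1}\oplus\mathcal{O}_{\mathbb{P}^1}(-(g+1))$ together with $\sum k_i^{\rm o}\leq g$ yields $h^0(\theta^*M|_F)=\sum k_i^{\rm o}+1$. A direct weight comparison identifies $D_0-E|_F$ (with $D_0$ the divisor in the hypothesis) with $\theta^*M|_F$, so the $h^0$ equality in the statement forces $h^0(D_0|_F)=\sum k_i^{\rm o}+1$, and the $\theta$-eigenspace splitting of sections yields $h^0(N|_F)=0$ where $N|_F=M|_F-D^{\rm o}|_F+\sum_{j\in J}p_j^{\rm o}$. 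Since $\deg N|_F\geq -1$ on every fiber, $h^1(N|_F)=0$ as well, so cohomology and base change apply and $f^{\rm o}_*\mathcal{O}_{S^{\rm o}}(N)$ is locally free of rank zero, hence vanishes identically.

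For the surviving summand I apply Lemma \ref{sp2} with $a_i=k_i^{\rm o}$ so that $M-\sum a_iD_i^{\rm o}=0$: the hypotheses are met since $h^0(M|_{F^{\rm o}})=\sum k_i^{\rm o}+1$ is constant on the rational fibers and $f^{\rm o}_*\mathcal{O}_{S^{\rm o}}=\mathcal{O}_{\TQ}$. Lemma \ref{sp1} then breaks each $f^{\rm o}_*\mathcal{O}_{k_i^{\rm o}D_i^{\rm o}}(M)$ into the line bundles $f^{\rm o}_*\mathcal{O}_{D_i^{\rm o}}(jD_i^{\rm o})$ for $j=1,\ldots,k_i^{\rm o}$, and the resulting short exact sequence splits by the same mechanism used in the proof of Proposition \ref{prop:split}, producing the stated direct-sum decomposition. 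For a Teichm\"uller curve $C$, the self-intersection $(D_i^{\rm o})^2=-\chi/(d_i^{\rm o}+2)<0$ ensures the summands have strictly distinct normalized degrees, so the decomposition coincides term-by-term with the graded quotients of the Harder--Narasimhan filtration by uniqueness; computing $\deg(\mathcal{L}\otimes f^{\rm o}_*\mathcal{O}_{D_i^{\rm o}}(jD_i^{\rm o}))=\tfrac{\chi}{2}(1-\tfrac{2j}{d_i^{\rm o}+2})$ together with $\deg\mathcal{L}=\chi/2$ gives the listed Weierstrass exponents. The principal obstacle is the decomposition of $\theta_*\mathcal{O}_S(\sum k_jD_j)$: the projection formula handles $\theta^*M$ cleanly, but the correction $E$ supported on ramification requires careful local analysis to locate the shift $\sum_{j\in J}D_j^{\rm o}$ in the anti-invariant eigenspace.
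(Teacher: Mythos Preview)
Your proof is correct and follows essentially the same route as the paper: factor $f=f^{\rm o}\circ\theta$, use Proposition~\ref{prop:omega(D)}, push through $\theta$ to reduce to the $\mathbb{P}^1$-bundle $f^{\rm o}$, kill the $(-D^{\rm o})$-summand, and apply Lemma~\ref{sp2}. The difference is in the order of operations. The paper first invokes the $h^0$ hypothesis to replace $f_*\mathcal{O}_S(\theta^*M+E)$ by $f_*\mathcal{O}_S(\theta^*M)$ (both are vector bundles, one contained in the other, generically equal), and only then applies the clean projection formula $\theta_*\mathcal{O}_S=\mathcal{O}_{S^{\rm o}}\oplus\mathcal{O}_{S^{\rm o}}(-D^{\rm o})$; the second summand then dies by $\sum k_i^{\rm o}\leq g$ alone. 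You instead compute $\theta_*\mathcal{O}_S(\theta^*M+E)$ directly via local analysis at ramification, and then combine both hypotheses through the eigenspace decomposition to kill $N$. The paper's order avoids the local computation of $\theta_*\mathcal{O}_S(E)$ entirely.

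One small point: your assertion $\deg N|_{F^{\rm o}}\geq -1$ is not justified from the stated hypotheses (it happens to equal $-1$ in the three Lanneau strata, but the theorem is phrased more generally), and it is in any case unnecessary. Having shown $h^0(N|_{F^{\rm o}_{\rm gen}})=0$, on $\mathbb{P}^1$ this forces $\deg N|_{F^{\rm o}}\leq -1$, hence $h^0=0$ on every smooth fiber; since $f^{\rm o}_*\mathcal{O}_{S^{\rm o}}(N)$ is torsion-free of generic rank zero it vanishes on all of $\TQ$, without any appeal to $h^1$.
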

\begin{proof}
Let $D_{i,1}$, $D_{i,2}$, and $D_j$ be the divisors in $S$ corresponding respectively to $p_{i,1}$, $p_{i,2}$, and $p_j$. By the first assumption and Proposition~\ref{prop:omega(D)}, we have 
\begin{eqnarray*}
f_*\omega_{S/\TQ}(D) & = & \mathcal{L}\otimes f_*\mathcal{O}_S\left(\sum_{D^{\rm o}_i\not\subset B^{\rm o}} k^{\rm o}_i(D_{i,1}+D_{i,2})+\sum_{D^{\rm o}_j\subset B^{\rm o}}(d^{\rm o}_j+1)D_j\right)\\
& = & \mathcal{L}\otimes f_*\mathcal{O}_S\left(\sum_{D^{\rm o}_i\not\subset B^{\rm o}} k^{\rm o}_i(D_{i,1}+D_{i,2})+\sum_{D^{\rm o}_j\subset B^{\rm o}}k^{\rm o}_j (2D_j)\right) \\
& = &\mathcal{L}\otimes f^{\rm o}_*\theta_*\mathcal{O}_S\left(\theta^*\Bigg(\sum_{D^{\rm o}_i\not\subset B^{\rm o}} k^{\rm o}_i D^{\rm o}_i+\sum_{D^{\rm o}_j\subset B^{\rm o}} k^{\rm o}_j D^{\rm o}_j\Bigg)\right) \\
& = & \mathcal{L}\otimes f^{\rm o}_* \left(\big(\mathcal{O}_{S^{\rm o}}\oplus \mathcal{O}_{S^{\rm o}}(-D^{\rm o})\big) \otimes \mathcal{O}_{S^{\rm o}}\Big(\sum k^{\rm o}_i D^{\rm o}_i\Big)\right) \\
& = & \mathcal{L}\otimes f^{\rm o}_*\mathcal{O}_{S^{\rm o}}\left(\sum k^{\rm o}_i D^{\rm o}_i\right)
\end{eqnarray*}
where the last equality follows from the second assumption that 
$$\deg \left(\sum k_i^{\rm o}D_i^{\rm o}\right)\Big|_{F^{\rm o}} < g +1 = \deg D^{\rm o}|_{F^{\rm o}}$$ 
for a fiber $F^{\rm o}$ of $S^{\rm o}$. We also remark that the first equality assumption on the generic fiber (as opposed to on every fiber) is sufficient for our purpose. To see it, note that the concerned direct image sheaves in the first two lines of the above displayed formula are subsheaves of a vector bundle (e.g., by taking the coefficients of the sections large enough), hence they are also vector bundles. Therefore, being equal on the generic fiber ensures that the two vector bundles are equal as one contains the other.    

The desired claims on the splitting and filtration thus follow from Lemma~\ref{sp2} by using $M= \sum k_i^{\rm o} D_i^{\rm o} $ and $a_i = k_i^{\rm o}$. The values of Weierstrass exponents of $C$ can be computed by using the self-intersection number $(D^{\rm o}_j)^2$ over $C$. 
\end{proof}

Next we treat the invariant part of the Hodge bundle. We introduce and use the notation 
$$ \ell_j^{\rm o} = \left[\frac{d_j^{\rm o}+2}{2}\right].$$

\begin{theorem}\label{po}Let $\TQ$ be the hyperelliptic locus of quadratic differentials in genus $g$ induced from $\TQ_0(\ldots,d^{\rm o}_i,\ldots,d^{\rm o}_j,\ldots)$. Suppose the following condition holds: 
$$\sum_{\substack{D^{\rm o}_i\not\subset B^{\rm o}\\ d^{\rm o}_i\ {\rm odd}}}1+\sum_{\substack{D^{\rm o}_j\subset B^{\rm o} \\ d^{\rm o}_j\ {\rm even}}}1 =  2.$$
Then the invariant part of the Hodge bundle over $\TQ$ splits into a direct sum of line bundles: 
$$f_*\omega_{S/\TQ} = \left(\bigoplus_{D^{\rm o}_i\not\subset B^{\rm o}}\bigoplus^{k^{\rm o}_i-1}_{j=0} f^{\rm o}_*\mathcal{O}_{D^{\rm o}_i}(\omega_{S^{\rm o}/\TQ}+D^{\rm o}-jD^{\rm o}_i)\right)\oplus\left(\bigoplus_{D^{\rm o}_j\subset B^{\rm o}}\bigoplus^{\ell^{\rm o}_j-1}_{i=0} f^{\rm o}_*\mathcal{O}_{D^{\rm o}_j}(\omega_{S^{\rm o}/\TQ}+D^{\rm o}-iD^{\rm o}_j)\right).$$
 
Moreover for a Teichm\"uller curve $C$ in $\TQ$, the direct sum splitting of $f_*\omega_{S/C}$ coincides with the direct sum of graded quotients of the Harder--Naramsihan filtration, whose Weierstrass exponents are given by $w_i^+$ as the $i$-th largest number in the set
$$\bigcup_{D^{\rm o}_i\not\subset B^{\rm o}}\left\{\frac{2l}{d^{\rm o}_i+2}\ \mid \ 0<l\leq \Big[\frac{d^{\rm o}_i+1}{2}\Big]\right\}\cup \bigcup_{D^{\rm o}_j\subset B^{\rm o}}\left\{\frac{2l-1}{d^{\rm o}_j+2}\ \mid \ 0<l\leq \Big[\frac{d^{\rm o}_j+2}{2}\Big]\right\}.$$
\end{theorem}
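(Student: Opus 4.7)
The plan is to parallel the proof of Theorem~\ref{hylc}, now applied to the invariant part via the identification $f_*\omega_{S/\TQ} = f^{\rm o}_*\omega_{S^{\rm o}/\TQ}(D^{\rm o})$ from~\eqref{eq:o-invariant}. The strategy is to apply Lemma~\ref{sp2} on the $\mathbb P^1$-fibration $f^{\rm o}\co S^{\rm o}\to\TQ$ with $M = \omega_{S^{\rm o}/\TQ}(D^{\rm o})$, coefficient $k_i^{\rm o}$ on each unramified section $D_i^{\rm o}\not\subset B^{\rm o}$, and coefficient $\ell_j^{\rm o}$ on each ramified section $D_j^{\rm o}\subset B^{\rm o}$; pairwise disjointness of sections makes this setup immediate.

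The fiber numerics are clean: since $B^{\rm o}|_{F^{\rm o}}$ consists of the $2g+2$ branch points of the hyperelliptic cover, $\deg D^{\rm o}|_{F^{\rm o}} = g+1$ and hence $M|_{F^{\rm o}}\cong \mathcal O_{\mathbb P^1}(g-1)$ with $h^0(M|_{F^{\rm o}}) = g$. The key rank identity
$$\sum_{D_i^{\rm o}\not\subset B^{\rm o}} k_i^{\rm o} + \sum_{D_j^{\rm o}\subset B^{\rm o}} \ell_j^{\rm o} = g$$
is established by separating $k_i^{\rm o}$ and $\ell_j^{\rm o}$ according to the parity of $d^{\rm o}$ and combining $\sum d_j^{\rm o} = -4$ with the Riemann--Hurwitz count $|B^{\rm o}| = 2g+2$; it is precisely here that the stated hypothesis on unramified-odd-order plus ramified-even-order sections summing to $2$ enters. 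Consequently $(M - \sum k_i^{\rm o} D_i^{\rm o} - \sum \ell_j^{\rm o} D_j^{\rm o})|_{F^{\rm o}}\cong \mathcal O_{\mathbb P^1}(-1)$ has vanishing $h^0$ fiberwise, Lemma~\ref{sp2} applies to give the block decomposition
$$f^{\rm o}_*\omega_{S^{\rm o}/\TQ}(D^{\rm o}) = \bigoplus_{D_i^{\rm o}\not\subset B^{\rm o}} f^{\rm o}_*\mathcal O_{k_i^{\rm o} D_i^{\rm o}}(M) \oplus \bigoplus_{D_j^{\rm o}\subset B^{\rm o}} f^{\rm o}_*\mathcal O_{\ell_j^{\rm o} D_j^{\rm o}}(M),$$
and Lemma~\ref{sp1} further refines each block into the line bundles of the stated form.

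For the Teichm\"uller curve statement I would use $(D_i^{\rm o})^2 = -\chi/(d_i^{\rm o}+2) < 0$ together with the second conclusion of Lemma~\ref{sp1}: within each block the constructed filtration is already the Harder--Narasimhan filtration with strictly decreasing slopes in $j$, and since $f_*\omega_{S/C}$ is now a direct sum of line bundles, its global Harder--Narasimhan filtration is simply the filtration by non-increasing slope, whose graded pieces coincide with the direct sum decomposition (grouping line bundles of equal slope). To read off the normalized Weierstrass exponents, I would compute
$$(\omega_{S^{\rm o}/C}+D^{\rm o}-jD_i^{\rm o})\cdot D_i^{\rm o}$$
using adjunction on the section $D_i^{\rm o}\cong C$ (which gives $\omega_{S^{\rm o}/C}\cdot D_i^{\rm o} = -(D_i^{\rm o})^2$) together with $D^{\rm o}\cdot D_i^{\rm o} = 0$ for $D_i^{\rm o}\not\subset B^{\rm o}$ and $D^{\rm o}\cdot D_i^{\rm o} = \tfrac12(D_i^{\rm o})^2$ for $D_i^{\rm o}\subset B^{\rm o}$; dividing by $\chi/2$ produces the values $2l/(d_i^{\rm o}+2)$ and $(2l-1)/(d_j^{\rm o}+2)$ in the two cases, matching the two sets in the theorem.

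The principal obstacle is the rank identity $\sum k_i^{\rm o} + \sum \ell_j^{\rm o} = g$, which is the only place the stated hypothesis actually enters; once this counting identity is verified, the remainder is a direct assembly of the two splitting lemmas of Section~\ref{sec:filtration} together with elementary intersection-number computations on the $\mathbb P^1$-bundle $S^{\rm o}$.
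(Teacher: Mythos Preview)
Your proposal is correct and follows essentially the same route as the paper: both identify $f_*\omega_{S/\TQ}$ with $f^{\rm o}_*\omega_{S^{\rm o}/\TQ}(D^{\rm o})$, subtract $\sum_{\not\subset B^{\rm o}} k_i^{\rm o} D_i^{\rm o} + \sum_{\subset B^{\rm o}} \ell_j^{\rm o} D_j^{\rm o}$ to obtain a class $P$ of fiber degree $-1$ (the paper does this by computing $2P$ as an explicit divisor, you do it via the rank identity $\sum k_i^{\rm o}+\sum\ell_j^{\rm o}=g$ combined with $\sum d^{\rm o}=-4$ and Riemann--Hurwitz, which is equivalent), and then apply Lemmas~\ref{sp1} and~\ref{sp2}. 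Your adjunction computation of the Weierstrass exponents is the same as the paper's self-intersection computation, just spelled out more explicitly.
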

\begin{proof} 
Recall from \eqref{eq:o-invariant} that 
$$ f_*\omega_{S/\TQ} = f^{\rm o}_*\omega_{S^{\rm o}/\TQ}(D^{\rm o}). $$
Moreover, we have 
$$2(\omega_{S^{\rm o}/\TQ}+D^{\rm o})=2\omega_{S^{\rm o}/\TQ}+B^{\rm o}=2(f^{\rm o})^*\mathcal{L}+\sum_{D^{\rm o}_i\not\subset B^{\rm o}} d^{\rm o}_iD^{\rm o}_i+\sum_{D^{\rm o}_j\subset B^{\rm o}}(d^{\rm o}_j+1)D^{\rm o}_j.$$
Define the following divisor class 
 $$P=(\omega_{S^{\rm o}/\TQ}+D^{\rm o})-\left(\sum_{D^{\rm o}_i\not\subset B^{\rm o}} k^{\rm o}_i D^{\rm o}_i+\sum_{D^{\rm o}_j\subset B^{\rm o}}\ell^{\rm o}_j D^{\rm o}_j\right).$$
It satisfies that 
$$2P =- \sum_{\substack{D^{\rm o}_i\not\subset B^{\rm o} \\ d^{\rm o}_i\ {\rm odd}}} D^{\rm o}_i - \sum_{\substack{D^{\rm o}_j\subset B^{\rm o} \\ d^{\rm o}_j\ {\rm even}}} D^{\rm o}_j.  $$
By assumption $\deg P|_{F^{\rm o}} = -1$, 
$\deg (K_{F^{\rm o}}- P|_{F^{\rm o}})= -1$, hence $H^0(P|_{F^{\rm o}}) = 0$ and 
$H^1(P|_{F^{\rm o}})=H^0(K_{F^{\rm o}}-P|_{F^{\rm o}})^{\vee}=0$, which implies that $f^{\rm o}_*\mathcal O_{S^{\rm o}}(P)= R^1f^{\rm o}_*\mathcal O_{S^{\rm o}}(P) = 0$. 
We thus obtain that 
\begin{eqnarray*}
f^{\rm o}_*\omega_{S^{\rm o}/\TQ}(D^{\rm o}) & = & f^{\rm o}_*\omega_{S^{\rm o}/\TQ}(D^{\rm o})/ f_{*}^{\rm o} \mathcal O_{S^{\rm o}}(P) \\ 
& = & \left(\bigoplus_{D^{\rm o}_i\not\subset B^{\rm o}} f^{\rm o}_*\mathcal{O}_{k^{\rm o}_i D^{\rm o}_i}(\omega_{S^{\rm o}/\TQ}+D^{\rm o})\right)\oplus \left(\bigoplus_{D^{\rm o}_j\subset  B^{\rm o}}f^{\rm o}_*\mathcal{O}_{\ell^{\rm o}_j D^{\rm o}_j}(\omega_{S^{\rm o}/\TQ}+D^{\rm o})\right).
\end{eqnarray*}
The desired claims on the splitting and filtration thus follow from Lemma~\ref{sp1}. The values of Weierstrass exponents of $C$ can be computed by using the self-intersection number $(D^{\rm o}_j)^2$ over $C$. 
\end{proof}

We remark that in the above proof the expression by using \eqref{eq:o-invariant} looks like the situation in the proof of Proposition~\ref{hy}, but in general we cannot apply that result since here the branch divisor $B^{\rm o}$ can be different from the locus of odd singularities of the underlying quadratic differentials in $S^{\rm o}$. 

Now we apply the above two theorems to the hyperelliptic strata of quadratic differentials. 

\begin{theorem}
\label{thm:hyp-strata}
    The conditions in Theorems~\ref{hylc} and~\ref{po} hold for the hyperelliptic strata $\mathcal{Q}(4(g-k)-6,4k+2)^{\rm hyp}$, $\mathcal{Q}(2(g-k)-3,2(g-k)-3,4k+2)^{\rm hyp}$, and $\mathcal{Q}(2(g-k)-3,2(g-k)-3,2k+1,2k+1)^{\rm hyp}$. In particular, the Hodge bundle on the hyperelliptic strata splits as a direct sum of line bundles. Moreover, 
    for any Teichm\"uller curve $C$ in these hyperelliptic strata, the splitting of the Hodge bundle over $C$ coincides with the direct sum of 
    graded quotients of the Harder--Naramsimhan filtration of the Hodge bundle on $C$ whose Weierstrass exponents are given by Theorems \ref{hylc} and \ref{po}. 
\end{theorem}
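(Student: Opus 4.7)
The strategy is to verify, for each of the three families of hyperelliptic strata, the hypotheses of Theorems~\ref{hylc} and~\ref{po}, using standard divisor arithmetic on the hyperelliptic curve $Y$ of genus $g$ governed by the hyperelliptic pencil $\mathfrak{h}$.

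I would first handle the two numerical conditions. The parity condition of Theorem~\ref{po} only counts zeros, since every pole contributes $d^{\rm o}=-1$, which is odd: for the first family both zeros $2(g-k)-4$ and $2k$ are even and lie in $B^{\rm o}$, giving $0+2=2$; for the second, $2(g-k)-3$ is odd and outside $B^{\rm o}$ while $2k$ is even and inside $B^{\rm o}$, giving $1+1=2$; for the third both zeros are odd and outside $B^{\rm o}$, giving $2+0=2$. Next, with $k^{\rm o}_i=\lfloor(d^{\rm o}_i+1)/2\rfloor$, direct computation yields $\sum k^{\rm o}_i=g-2,\ g-1,\ g$ respectively in the three families, all at most $g$ as required by Theorem~\ref{hylc}.

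The main content is then the $h^0$-equality of Theorem~\ref{hylc}. The two divisors involved differ only by a single extra marker at each even-order singularity lying in $B^{\rm o}$, so in the third family they coincide and the equality is automatic. For the first and second families I would exploit the relations $K_Y\equiv(g-1)\mathfrak{h}$, $2p_j\equiv\mathfrak{h}$ at each Weierstrass point $p_j$, and $p_{i,1}+p_{i,2}\equiv\mathfrak{h}$ at each conjugate pair, to rewrite the right-hand divisor $D$ and the left-hand divisor $D'=D+(\text{extra Weierstrass points})$ modulo $\mathfrak{h}$. In the first family this produces $D\equiv K_Y-\mathfrak{h}$ and $K_Y-D'\equiv p_1-p_2$, which is nontrivial $2$-torsion and therefore has $h^0=0$; Riemann--Roch then forces $h^0(D)=h^0(D')=g-1$. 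In the second family one computes $D\equiv K_Y$ and $K_Y-D'\equiv -p_2$, so both $h^0$ values equal $g$.

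The main obstacle is this last Riemann--Roch bookkeeping: one must carefully identify the residual classes $K_Y-D$ and $K_Y-D'$ as multiples of $\mathfrak{h}$ plus a small combination of distinct Weierstrass points, and then verify that the remaining nontrivial $2$-torsion classes have no global sections. Once this is done, Theorems~\ref{hylc} and~\ref{po} simultaneously yield the splitting of the invariant and anti-invariant parts of the Hodge bundle and identify their Harder--Narasimhan filtrations and Weierstrass exponents, completing the proof.
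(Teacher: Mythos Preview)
Your proposal is correct and follows essentially the same approach as the paper: verify the numerical conditions of Theorems~\ref{hylc} and~\ref{po} directly, then use the hyperelliptic relations $2p_j\equiv\mathfrak{h}$ and $p_{i,1}+p_{i,2}\equiv\mathfrak{h}$ together with Riemann--Roch to establish the $h^0$-equality. The paper only carries out the first family in detail (obtaining $h^0=g-1$ on both sides via $K-2p_1$ and $K-p_1+p_2$, equivalent to your computation) and leaves the other two as ``similarly,'' whereas you also sketch families two and three explicitly; your observation that the two divisors coincide in the third family is a nice shortcut.
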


\begin{proof}
Consider $\mathcal{Q}(4(g-k)-6,4k+2)^{\rm hyp}$ that arises from $\mathcal{Q}_0(2(g-k)-4,2k, -1^{2g})^{\rm hyp}$, where the hyperelliptic double cover is ramified over all the singularities. In this case the first condition in Theorem~\ref{hylc} reduces to 
$$h^0((2g-2k-3)p_1 + (2k+1)p_2) = h^0((2g-2k-4)p_1 + 2kp_2). $$
Since $p_1$ and $p_2$ are both Weierstrass points, the divisor $(2g-2k-4)p_1 + 2kp_2$ is equivalent to $K - 2p_1$ whose $h^0$ is $g-1$, and the divisor $(2g-2k-3)p_1 + (2k+1)p_2$ is equivalent to $K-p_1 + p_2$ whose $h^0$ is also $g-1$, hence the desired equality holds. The second condition in Theorem~\ref{hylc} holds by a straightforward verification. Since every singularity is ramified and exactly two of them are of even order, the condition in Theorem~\ref{po} holds as well. One can similarly check for the other two hyperelliptic strata. 
\end{proof}

Next we study the asymptotic behavior of Lyapunov exponents in the hyperelliptic case. 

\begin{theorem} 
\label{thm:hyp-asyp}
For the hyperelliptic locus of quadratic differentials induced from $\mathcal{Q}_0(\ldots,d^{\rm o}_i,\ldots,d^{\rm o}_j,\ldots)$, we have 
$$\lim_{\max\{d_k^{\rm o}\}\to \infty}\lambda_m^-=1$$
for any fixed $m$. Moreover, if there exists a fixed integer $N$ such that 
$$\sum_{\substack{D^{\rm o}_i\not\subset B^{\rm o} \\ d^{\rm o}_i\ {\rm odd}}} 1 + \sum_{\substack{D^{\rm o}_j\subset B^{\rm o} \\ d^{\rm o}_j\ {\rm even}}} 1 \leq N, $$ 
then we have 
$$\lim_{\max\{d_k^{\rm o}\}\to \infty}\lambda_m^+=1.$$
\end{theorem}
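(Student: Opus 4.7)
The plan follows the template of Corollary \ref{as}. In each case we exhibit $m$ Weierstrass exponents $w_i^{\pm}$ that tend to $1$ as $\max\{d_k^{\rm o}\}\to\infty$, apply the inequality \eqref{lw} together with the trivial bound $\lambda_i^\pm\leq 1$ to squeeze $\lambda_m^\pm(C)\to 1$ for Teichm\"uller curves $C$ in the locus, and conclude by density and \cite{BEW}.

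For the anti-invariant part, set $d^{\rm o}:=\max\{d_k^{\rm o}\}$ and let $i_0$ realize the maximum. The two hypotheses of Theorem \ref{hylc} hold once $d^{\rm o}$ is sufficiently large: the first is a Riemann--Roch equality on the hyperelliptic $\mathbb P^1$-fibers, while a direct check shows $\sum k_i^{\rm o}\leq g$ in each of the three hyperelliptic families of Theorem \ref{thm:hyp-strata}. Theorem \ref{hylc} then produces $w_1^-=1$ together with the exponents
$$1-\frac{2l}{d^{\rm o}+2},\qquad l=1,\ldots,m-1,$$
each at least $1-2(m-1)/(d^{\rm o}+2)$. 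Sorting in decreasing order gives $w_m^-(C)\geq 1-2(m-1)/(d^{\rm o}+2)$ for every Teichm\"uller curve $C$ in the locus, hence $\sum_{i=1}^m w_i^-(C)\to m$. Inequality \eqref{lw} combined with $\lambda_i^-\leq 1$ forces $\lambda_m^-(C)\to 1$, and density together with \cite{BEW} transfers the limit to the whole stratum.

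For the invariant part the scheme is identical once a lower bound on $w_1^+,\dots,w_m^+$ is in place. When the sum $N_0$ appearing in the hypothesis equals $2$ this is immediate from Theorem \ref{po}: the $m$ largest values of $\{2l/(d_{i_0}^{\rm o}+2):l\leq\lfloor(d_{i_0}^{\rm o}+1)/2\rfloor\}$ give $m$ exponents of size $1-O(m/d^{\rm o})$. For general $N_0\leq N$ the same line bundles $\mathcal L\otimes f_*^{\rm o}\mathcal O_{D_{i_0}^{\rm o}}(lD_{i_0}^{\rm o})$ still inject into $f_*\omega_{S/\TQ}$ as subquotients of the expected normalized slope $2l/(d^{\rm o}+2)$, so the top $m$ slopes of the Harder--Narasimhan polygon are again $\geq 1-O(m/d^{\rm o})$, and the rest of the argument proceeds exactly as in the anti-invariant case.

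The main technical obstacle is the case $N_0>2$ of the invariant part: Theorem \ref{po} was stated under the sharp condition $N_0=2$, and its proof uses the vanishing of the auxiliary sheaf $f_*^{\rm o}\mathcal O_{S^{\rm o}}(P)$, which fails once $N_0>2$ since $\deg P|_{F^{\rm o}}=-N_0/2$ gives $h^1(P|_{F^{\rm o}})=N_0/2-1>0$. The full splitting of $f_*\omega_{S/\TQ}$ into line bundles is thus lost, but \eqref{lw} only needs a lower bound on the initial portion of the Harder--Narasimhan polygon. This survives because the relevant large-$l$ subsheaves remain line bundle subobjects of $f_*\omega_{S/\TQ}$ of the predicted degree, thanks to the disjointness of the sections $D_i^{\rm o}$ and the negativity of $(D_i^{\rm o})^2$; the uniform bound $N_0\leq N$ is what ensures that the number of "lost" pieces stays fixed while $\max\{d_k^{\rm o}\}$ grows, so the line bundle subquotients asymptotically saturate the top of the polygon.
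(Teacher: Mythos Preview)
Your overall strategy---produce enough Weierstrass exponents near $1$, invoke \eqref{lw} with $\lambda_i^\pm\le 1$, then pass to the stratum via density and \cite{BEW}---matches the paper. The execution, however, has two genuine gaps.

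\textbf{Anti-invariant part.} You invoke Theorem~\ref{hylc}, but its hypotheses are not automatic. The first condition is an $h^0$ equality on the genus-$g$ hyperelliptic fibre (not on $\mathbb P^1$), and it was only verified in Theorem~\ref{thm:hyp-strata} for the three special strata by hand using Weierstrass-point arithmetic; the statement of Theorem~\ref{thm:hyp-asyp} covers an arbitrary hyperelliptic locus pulled back from genus zero. The paper avoids this entirely: it simply exhibits the chain of inclusions
\[
f_*\omega_{S/C}(D)\ \supset\ \mathcal L\otimes f^{\rm o}_*\mathcal O_{S^{\rm o}}\big(k_l^{\rm o}D_l^{\rm o}\big)\ =\ \bigoplus_{j=0}^{k_l^{\rm o}}\mathcal L\otimes f^{\rm o}_*\mathcal O_{D_l^{\rm o}}(jD_l^{\rm o}),
\]
where $l$ realises $\max\{d_k^{\rm o}\}$. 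A direct-sum \emph{subbundle} with many slopes near $\chi/2$ immediately bounds the top of the HN polygon of $f_*\omega_{S/C}(D)$ from below---no splitting of the whole bundle is needed.

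\textbf{Invariant part.} This is where the real gap lies. First, the line bundles you name, $\mathcal L\otimes f^{\rm o}_*\mathcal O_{D_{i_0}^{\rm o}}(lD_{i_0}^{\rm o})$, belong to the anti-invariant decomposition of Theorem~\ref{hylc}, not to $f_*\omega_{S/\TQ}$; their normalised slope is $1-2l/(d^{\rm o}+2)$, not $2l/(d^{\rm o}+2)$. Second, and more seriously, the line bundles that do appear for the invariant part (those in Theorem~\ref{po}) are \emph{graded quotients}, not subobjects, of $f^{\rm o}_*\omega_{S^{\rm o}/C}(D^{\rm o})$. High-slope subquotients do not by themselves force the top of the HN polygon to be large; the assertion that they ``remain line bundle subobjects'' is exactly what fails once the vanishing $f^{\rm o}_*\mathcal O_{S^{\rm o}}(P)=0$ is lost. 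The paper's remedy is to replace $P$ by the divisor $Q$ with $2Q=-2P$, so that $\deg Q|_{F^{\rm o}}\ge 0$ and hence $R^1f^{\rm o}_*\mathcal O_{S^{\rm o}}(Q)=0$. This yields an exact sequence
\[
0\to V_1=f^{\rm o}_*\mathcal O_{S^{\rm o}}(Q)\to V=f^{\rm o}_*\omega_{S^{\rm o}/C}(D^{\rm o})\to V_2\to 0
\]
with $\operatorname{rank}V_1=N_0/2+1\le N/2+1$ bounded and $V_2$ a direct sum of line bundles. The Shatz specialization then places the HN polygon of $V$ below that of $V_1\oplus V_2$, and since $V_1$ has bounded rank while $V_2$ has many slopes near $\chi/2$, the polygon of $V$ is forced close to that of $V_2$. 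Your ``disjointness of sections and negativity of self-intersection'' does not substitute for this exact-sequence-plus-bounded-kernel mechanism.
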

\begin{proof} Let $C$ be a Teichm\"uller curve in the hyperelliptic locus. For the first claim, recall that we have 
\begin{eqnarray*}
f_*\omega_{S/C}(D) & = & \mathcal{L}\otimes f_*\mathcal{O}_S\left(\sum_{D^{\rm o}_i\not\subset B^{\rm o}} k^{\rm o}_i(D_{i,1}+D_{i,2})+\sum_{D^{\rm o}_j\subset B^{\rm o}}(d^{\rm o}_j+1)D_j\right)\\
& \supset & 
\mathcal{L}\otimes f_*\mathcal{O}_S\left(\sum_{D^{\rm o}_i\not\subset B^{\rm o}} k^{\rm o}_i(D_{i,1}+D_{i,2})+\sum_{D^{\rm o}_j\subset B^{\rm o}}k^{\rm o}_j(2D_j)\right) \\
& = & \mathcal{L}\otimes f^{\rm o}_*\theta_*\mathcal{O}_S\left(\theta^*\Bigg(\sum_{D^{\rm o}_i\not\subset B^{\rm o}} k^{\rm o}_iD^{\rm o}_i+\sum_{D^{\rm o}_j\subset B^{\rm o}}k^{\rm o}_jD^{\rm o}_j\Bigg)\right) \\
& = & \mathcal{L}\otimes f^{\rm o}_*\left(\big(\mathcal{O}_{S^{\rm o}}\oplus \mathcal{O}_{S^{\rm o}}(-D^{\rm o})\big)\otimes\mathcal{O}_{S^{\rm o}}\Bigg(\sum k^{\rm o}_i D^{\rm o}_i\Bigg)\right) \\
&\supset & \mathcal{L}\otimes f^{\rm o}_*\mathcal{O}_{S^{\rm o}}\left(k^{\rm o}_l D^{\rm o}_l\right) = \bigoplus^{k^{\rm o}_l}_{j=0} \big(\mathcal{L}\otimes f^{\rm o}_*\mathcal{O}_{D^{\rm o}_l}(jD^{\rm o}_l)\big)
\end{eqnarray*}
where $k_l^{\rm o} = \max\{ k_i^{\rm o} \}$.  As $\max\{d_k^{\rm o} \}\to \infty$, we have $k_l^{\rm o} = [(d_l^{\rm o}+1)/2]\to \infty$, and hence the Weierstrass exponent $w_m^-$ of the above direct sum subbundle approaches $1$ for any fixed $m$. 
As in the proof of Corollary~\ref{as}, we conclude that 
$$\lim_{\max\{d_k^{\rm o}\}\to\infty}\lambda_m^- = 1.$$

For the other claim, define the following divisor class $$Q=(\omega_{S^{\rm o}/C}+D^{\rm o})-\left(\sum_{D^{\rm o}_i\not\subset B^{\rm o}} (\ell^{\rm o}_i-1)D^{\rm o}_i+\sum_{D^{\rm o}_j\subset B^{\rm o}} k^{\rm o}_j D^{\rm o}_j\right).$$ It satisfies that $$2Q =\sum_{\substack{D^{\rm o}_i\not\subset B^{\rm o} \\ d^{\rm o}_i\ {\rm odd}}} D^{\rm o}_i + \sum_{\substack{D^{\rm o}_j\subset B^{\rm o} \\ d^{\rm o}_j\ {\rm even}}} D^{\rm o}_j.  $$ 
Since $\deg Q|_{F^{\rm o}} \geq 0$, we have 
$R^1f^{\rm o}_*\mathcal O_{S^{\rm o}}(Q) = 0$. 
Consider the exact sequence 
$$0\rightarrow V_1=f^{\rm o}_*\mathcal{O}_{S^{\rm o}}(Q)\rightarrow  V=f^{\rm o}_*\omega_{S^{\rm o}/C}(D^{\rm o})\rightarrow V_2=f^{\rm o}_*\omega_{S^{\rm o}/C}(D^{\rm o})/ f_{*}^{\rm o} \mathcal O_{S^{\rm o}}(Q)\rightarrow 0.$$
It follows that 
$$V / V_1 = V_2 = \left(\bigoplus_{D^{\rm o}_i\not\subset B^{\rm o}} f^{\rm o}_*\mathcal{O}_{(\ell^{\rm o}_i-1)D^{\rm o}_i}(\omega_{S^{\rm o}/C}+D^{\rm o})\right)\oplus \left(\bigoplus_{D^{\rm o}_j\subset  B^{\rm o}}f^{\rm o}_*\mathcal{O}_{k^{\rm o}_j D^{\rm o}_j}(\omega_{S^{\rm o}/C}+D^{\rm o})\right).$$
By the Shatz specialization, we know that the Harder--Narasimhan polygon of $V$ lies below that of $V_1\oplus V_2$. Note that 
$${\rm rank}\ V_1=\frac{1}{2}\left(\sum_{\substack{D^{\rm o}_i\not\subset B^{\rm o} \\ d^{\rm o}_i\ {\rm odd}}} 1 + \sum_{\substack{D^{\rm o}_j\subset B^{\rm o} \\ d^{\rm o}_j\ {\rm even}}} 1\right) + 1 $$
is bounded by assumption. 
Therefore, as $\max\{d_k^{\rm o}\}\to\infty$, the  Harder--Narasimhan polygon of $V_2$ can be  arbitrarily close to that of $V$. By using 
$V_2$ we thus conclude that 
$$\lim_{\max\{d_k^{\rm o}\}\to \infty}\lambda_m^+ = 1.$$ 
\end{proof}

\begin{corollary} For each of the hyperelliptic strata $\mathcal{Q}(2(g-k)-3,2(g-k)-3,2k+1,2k+1)^{\rm hyp}$, $\mathcal{Q}(2(g-k)-3,2(g-k)-3,4k+2)^{\rm hyp}$, and $\mathcal{Q}(4(g-k)-6,4k+2)^{\rm hyp}$, we have
$$\lim_{k\to\infty}\lambda_m^+=\lim_{g\to\infty}\lambda_m^+=\lim_{k\to\infty}\lambda_m^-=\lim_{g\to\infty}\lambda_m^-=1.$$
\end{corollary}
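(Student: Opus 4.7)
The plan is to deduce the corollary directly from Theorem~\ref{thm:hyp-asyp}, so the proof reduces to verifying its two hypotheses for each of the three hyperelliptic families. First I would check that the maximum singularity order $\max\{d_i^{\rm o}\}$ of the underlying genus-zero stratum tends to infinity as $k\to\infty$ or $g\to\infty$ subject to the defining constraints ($g-k\geq 2$, $g-k\geq 1$, and $g-k\geq 2$ respectively). This is immediate, since each of the three underlying strata has one zero of order linear in $k$ (either $2k$ or $2k+1$) and one zero of order linear in $g-k$ (either $2(g-k)-4$ or $2(g-k)-3$), so any valid way of sending $k$ or $g$ to infinity forces at least one of these orders to infinity. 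This alone yields the $\lambda_m^-$ conclusion via the first part of Theorem~\ref{thm:hyp-asyp}.

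For the $\lambda_m^+$ conclusion I additionally need to show that the combinatorial quantity
\[
N \;:=\; \sum_{\substack{D^{\rm o}_i\not\subset B^{\rm o}\\ d^{\rm o}_i\ {\rm odd}}} 1 \;+\; \sum_{\substack{D^{\rm o}_j\subset B^{\rm o}\\ d^{\rm o}_j\ {\rm even}}} 1
\]
is uniformly bounded in $g$ and $k$, which I would verify by a parity-and-ramification bookkeeping case by case. For $\mathcal{Q}_0(2(g-k)-4,2k,-1^{2g})$ with every singularity branched, the first sum is $0$ and the second sum counts the two even-order zeros, giving $N=2$. For $\mathcal{Q}_0(2(g-k)-3,2k,-1^{2g+1})$ with the $2g+1$ poles and the zero of order $2k$ branched, the first sum equals $1$ (the unramified zero of odd order $2(g-k)-3$) and the second equals $1$ (the branched zero of even order $2k$), again giving $N=2$. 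For $\mathcal{Q}_0(2(g-k)-3,2k+1,-1^{2g+2})$ with branching only at the $2g+2$ poles, the first sum equals $2$ (the two unramified zeros, both of odd order) while the second is $0$ (all branch points are poles of order $-1$), so once more $N=2$.

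With the uniform bound $N\leq 2$ established, Theorem~\ref{thm:hyp-asyp} applies in all three families and produces simultaneously the four limits $\lim\lambda_m^{\pm}=1$. There is essentially no genuine obstacle in this argument; the only point requiring a touch of care is that under the defining inequalities the two limits $\lim_{k\to\infty}$ and $\lim_{g\to\infty}$ are not independent (since $k$ is capped by $g-1$ or $g-2$), yet sending either index to infinity along any valid sequence still forces $\max\{d_i^{\rm o}\}\to\infty$, so Theorem~\ref{thm:hyp-asyp} applies along that sequence and delivers each of the four claimed limits.
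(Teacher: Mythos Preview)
Your proposal is correct and follows exactly the paper's approach: the paper's proof simply states that the hyperelliptic strata satisfy the assumption of Theorem~\ref{thm:hyp-asyp} and invokes it, while you spell out the verification of both hypotheses (growth of $\max\{d_i^{\rm o}\}$ and the bound $N=2$) in detail. In fact the equality $N=2$ is already implicitly established in the proof of Theorem~\ref{thm:hyp-strata} via the condition of Theorem~\ref{po}, so your case-by-case bookkeeping is a faithful unpacking of what the paper leaves to the reader.
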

\begin{proof}
Since the hyperelliptic strata satisfy the assumption of Theorem~\ref{thm:hyp-asyp}, the claim thus follows from there. 
\end{proof}

Besides pulling back quadratic differentials from strata in genus zero, we can also pull back from strata in other genera by using the same construction as in the sequence \eqref{eq:triple}. In what follows we demonstrate this idea by pulling back from the nonvarying strata in genus one. 

\begin{proposition} 
For the loci of quadratic differentials induced from the double cover construction by pulling back quadratic differentials from $\mathcal{Q}_1(n,-1^{n})$ and $\mathcal{Q}_1(n,1,-1^{n+1})$, for any fixed $m$ we have
$$\lim_{n\to\infty}\lambda_m^-=1.$$   
\end{proposition}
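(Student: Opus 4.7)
The plan is to mimic the proof of Theorem~\ref{thm:hyp-asyp} adapted to the setting where the base of the double cover has genus one instead of zero. I would first set up the triple diagram
$$\mathcal{Q}_1(n,-1^n) \text{ or } \mathcal{Q}_1(n,1,-1^{n+1}) \overset{\psi}{\longrightarrow} \mathcal{Q}_{g'}(\ldots) \overset{\phi}{\longrightarrow} \mathcal{H}_{g'+g'_{\rm eff}}(\ldots)$$
exactly as in \eqref{eq:triple}, with the induced universal families $f^{\rm o}\colon S^{\rm o}\to\TQ$, $f\colon S\to\TQ$, $f'\colon S'\to\TQ$ and the double cover maps $\theta$ and $\sigma$, together with the branch divisor $B^{\rm o}\subset S^{\rm o}$ of $\theta$. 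The relation $\sigma_{*}\mathcal O_{S'} = \mathcal O_S \oplus \mathcal O_S(-D)$ still splits the pushforward of $\omega_{S'/\TQ}$ into its $\sigma$-invariant and $\sigma$-anti-invariant parts, so that the anti-invariant piece $f_{*}\omega_{S/\TQ}(D)$ continues to be the object of interest for bounding $\lambda_m^-$.

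Second, I would locate inside $f_{*}\omega_{S/\TQ}(D)$ an explicit subbundle whose Weierstrass exponents can be computed. Using the non-varying property of $\mathcal Q_1(n,-1^n)$ and $\mathcal Q_1(n,1,-1^{n+1})$ established earlier in the paper (via Proposition~\ref{prop:split-teich} with the choices $a_1=1$ from Section~4.3.1), the large-order singularity $D_1^{\rm o}$ of order $n$ pulls back to produce an inclusion
$$\bigoplus_{j=1}^{[n/2]} \bigl(\mathcal L\otimes f^{\rm o}_{*}\mathcal O_{D_1^{\rm o}}(jD_1^{\rm o})\bigr) \hookrightarrow f_{*}\omega_{S/C}(D)$$
analogous to the last displayed chain of inclusions in the proof of Theorem~\ref{thm:hyp-asyp}. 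Using the self-intersection $(D_1^{\rm o})^2 = -\chi/(n+2)$ on any Teichm\"uller curve $C\subset\TQ$ and the degree computation \eqref{nu}, the normalized degrees of these line bundle summands are $1-2j/(n+2)$ for $j=1,\ldots,[n/2]$. Consequently the Weierstrass exponents satisfy $w_m^-\geq 1-2m/(n+2)$ for every fixed $m$ once $n$ is large enough.

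Finally, combining the inequality $\sum_{i=1}^m \lambda_i^-\geq \sum_{i=1}^m w_i^-$ from \eqref{lw} with $w_1^-=1$ gives $\lambda_m^-\geq w_m^-$ on every Teichm\"uller curve in the pullback locus. Since the union of Teichm\"uller curves is dense in the stratum and Lyapunov exponents are upper semicontinuous by \cite{BEW}, the inequality $\lambda_m^-\geq 1-2m/(n+2)$ extends to the whole locus, so that $\lim_{n\to\infty}\lambda_m^-=1$.

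The main obstacle I anticipate is the fact that, unlike the genus-zero case treated in Section~4, the pushforward $f^{\rm o}_{*}\omega_{S^{\rm o}/\TQ}$ is no longer zero when $S^{\rm o}\to\TQ$ is a family of elliptic curves, so the clean identification $f_{*}\omega_{S/\TQ} = f^{\rm o}_{*}\omega_{S^{\rm o}/\TQ}(D^{\rm o})$ used in \eqref{eq:o-invariant} fails. One must therefore work directly with $f_{*}\omega_{S/\TQ}(D)$ via Proposition~\ref{prop:omega(D)}, and verify by a Riemann--Roch computation on the elliptic fiber that the required $h^0$ equalities (analogous to the first hypothesis of Theorem~\ref{hylc}) hold so that the subbundle above really does embed in $f_{*}\omega_{S/C}(D)$ with the claimed degrees. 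Everything else is a routine transcription of the genus-zero argument.
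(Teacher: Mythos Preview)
Your proposal is correct and follows essentially the same route as the paper: embed $\mathcal{L}\otimes f^{\rm o}_*\mathcal{O}_{S^{\rm o}}\big(\big[\tfrac{n+1}{2}\big]D^{\rm o}_1\big)$ into $f_*\omega_{S/C}(D)$ via Proposition~\ref{prop:omega(D)} and the projection formula for $\theta$, then split this subbundle into line bundles whose normalized degrees tend to $1$. The obstacle you anticipate is indeed bypassed exactly as you suggest, and note that the non-varying input from Section~4.3.1 is not actually required---only the elementary splitting from Lemma~\ref{sp1} (also, the step ``$\lambda_m^-\geq w_m^-$'' for general $m$ needs the trivial bound $\lambda_i^-\leq 1$ in addition to \eqref{lw}, as in the proof of Corollary~\ref{as}).
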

\begin{proof} The proof is similar to that of Theorem~\ref{thm:hyp-asyp}. We have 
\begin{eqnarray*}
f_*\omega_{S/C}(D) & = & \mathcal{L}\otimes f_*\mathcal{O}_S\left(\sum_{D^{\rm o}_i\not\subset B^{\rm o}} k^{\rm o}_i (D_{i,1}+D_{i,2})+\sum_{D^{\rm o}_j\subset B^{\rm o}}(d^{\rm o}_j+1)D_j\right)\\
& \supset & \mathcal{L}\otimes f^{\rm o}_*\theta_*\mathcal{O}_S\left(\theta^*\left(\Big[\frac{n+1}{2}\Big]D^{\rm o}_1\right)\right) \\
& \supset & \mathcal{L}\otimes f^{\rm o}_*\mathcal{O}_{S^{\rm o}}\left(\left[\frac{n+1}{2}\right]D^{\rm o}_1\right) =  \bigoplus^{[\frac{n+1}{2}]}_{i=0}\big(\mathcal{L}\otimes f^{\rm o}_*\mathcal{O}_{D^{\rm o}_1}(iD^{\rm o}_1)\big). 
\end{eqnarray*}
By using the above direct sum subbundle we conclude that 
$$\lim_{n\to\infty}\lambda_m^-=1.$$
\end{proof}

\section{Upper bounds of the Harder--Narasimhan polygon}
\label{sec:bounds}
An analogous discussion, similar to the one in \cite{YZ2}, can provide an upper bound for $w_i^+$ and $w_i^-$ for all Teichm\"uller curves in a stratum of quadratic differentials. 

For a vector bundle $V$, recall that   $$\mu_i(V)=\mu({\rm gr}_j^{{\rm HN}(V)})$$ 
if ${\rm rank}\ {\rm HN}_{j-1}(V)<i\leq {\rm rank}\ {\rm HN}_j(V)$, where ${\rm gr}_j^{{\rm HN}(V)}$ is the $j$-th graded quotient of the Harder--Narasimhan filtration of $V$ and $\mu$ is the slope as we reviewed in Section~\ref{sec:filtration}. 

\begin{lemma}[{\cite[Lemma 2.1]{YZ2}}]\label{ie}  Let $V$ and $U$ be two vector bundles of rank $n$ on a compact algebraic curve $C$, with increasing filtrations 
    $$0\subset V_0\subset V_1\subset \cdots\subset V_n=V,$$
    $$0\subset U_0\subset U_1\subset \cdots\subset U_n=U,$$
   such that $V_i/V_{i-1}$ and $U_i/U_{i-1}$ are line bundles, $V_i/V_{i-1}\subset U_i/U_{i-1}$, and the degrees of $U_i/U_{i-1}$ decrease as $i$ increases. Then for any $1\leq i\leq n$, we have $\mu_i(V)\leq \deg (U_i/U_{i-1})$.
\end{lemma}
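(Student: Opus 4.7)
The plan is to sandwich $\mu_i(V)$ between two bounds involving the maximum Harder--Narasimhan slope of the quotient $V/V_{i-1}$, establishing
\[
\mu_i(V)\;\leq\;\mu_{\max}(V/V_{i-1})\;\leq\;\deg(U_i/U_{i-1}).
\]

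For the upper bound, I would push the filtration down to $V/V_{i-1}$ to get
\[
0\subset V_i/V_{i-1}\subset V_{i+1}/V_{i-1}\subset\cdots\subset V_n/V_{i-1}=V/V_{i-1}
\]
whose graded pieces are the line bundles $V_j/V_{j-1}$ for $j\geq i$. Each such line bundle injects into $U_j/U_{j-1}$, so by the monotonicity hypothesis its degree is at most $\deg(U_j/U_{j-1})\leq \deg(U_i/U_{i-1})$. Given any coherent subsheaf $T\subset V/V_{i-1}$ of rank $r$, intersecting with this filtration realizes $T$ as a successive extension of at most $r$ nonzero subsheaves of the line bundles $V_j/V_{j-1}$; on a smooth curve each such subsheaf has degree $\leq \deg(V_j/V_{j-1})\leq \deg(U_i/U_{i-1})$. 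Summing gives $\deg T\leq r\cdot \deg(U_i/U_{i-1})$, and hence $\mu_{\max}(V/V_{i-1})\leq \deg(U_i/U_{i-1})$.

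For the lower bound, let $j$ be the index of the HN graded piece containing $i$, so that $\operatorname{rank}({\rm HN}_{j-1}(V))<i\leq \operatorname{rank}({\rm HN}_j(V))$ and $\mu({\rm gr}_j^{{\rm HN}(V)})=\mu_i(V)$; in particular $\mu_{\min}({\rm HN}_j(V))=\mu_i(V)$. A rank count (the kernel ${\rm HN}_j(V)\cap V_{i-1}$ has rank $\leq i-1$) shows that the image $\overline{W}$ of $W:={\rm HN}_j(V)$ under the projection $V\to V/V_{i-1}$ has positive rank. Being a quotient of $W$, it satisfies $\mu_{\min}(\overline W)\geq \mu_{\min}(W)=\mu_i(V)$, by the standard fact that $\mu_{\min}$ is weakly non-decreasing under quotients on a smooth curve. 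Therefore
\[
\mu_{\max}(V/V_{i-1})\;\geq\;\mu_{\max}(\overline W)\;\geq\;\mu_{\min}(\overline W)\;\geq\;\mu_i(V),
\]
which closes the sandwich and yields $\mu_i(V)\leq \deg(U_i/U_{i-1})$.

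The main conceptual obstacle is the quotient inequality $\mu_{\min}(\overline W)\geq \mu_{\min}(W)$: a naive partial-sum argument (which is all one gets by bounding $\deg {\rm HN}_j(V)$ by $\sum e_l$ summed over indices $l$ where the intersection filtration jumps) only delivers the weaker inequality $\sum_{k\leq i}\mu_k(V)\leq \sum_{k\leq i}\deg(U_k/U_{k-1})$, which does not imply the pointwise statement even with both sequences decreasing. Passing to the quotient $V/V_{i-1}$ is the device that converts the problem into one about $\mu_{\max}$, where the HN theory gives matching bounds on both sides. Everything else (the degree comparison from $V_j/V_{j-1}\hookrightarrow U_j/U_{j-1}$, and the monotonicity of $\deg(U_j/U_{j-1})$) then enters as a routine propagation through the induced filtration.
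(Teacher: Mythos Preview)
Your argument is correct. The paper does not supply its own proof of this lemma; it simply quotes the statement from \cite[Lemma 2.1]{YZ2} and uses it, so there is no in-paper proof to compare against. Your sandwich
\[
\mu_i(V)\leq \mu_{\max}(V/V_{i-1})\leq \deg(U_i/U_{i-1})
\]
is a clean and standard way to obtain the pointwise bound: the upper inequality uses only that each graded piece $V_j/V_{j-1}$ (for $j\geq i$) injects into a line bundle of degree at most $\deg(U_i/U_{i-1})$, together with the fact that on a smooth curve a nonzero subsheaf of a line bundle is a line bundle of no larger degree; the lower inequality uses that ${\rm HN}_j(V)$ has rank $\geq i$ while $V_{i-1}$ has rank $i-1$, so the image $\overline{W}\subset V/V_{i-1}$ is nonzero and torsion-free, and then the elementary HN fact that $\mu_{\min}$ can only go up under torsion-free quotients (any quotient of $\overline{W}$ is already a quotient of ${\rm HN}_j(V)$). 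One small point worth making explicit: the hypothesis that the $V_i/V_{i-1}$ are line bundles guarantees that each $V_{i-1}$ is a subbundle of $V$, so $V/V_{i-1}$ is locally free and $\overline{W}$ is indeed torsion-free; without this your quotient inequality would need a saturation step.
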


\begin{theorem}For any Teichm\"uller curve $C$ in $\mathcal{Q}_g(\ldots,d_j,\ldots)$, we have
$w_i^+$ for $i=1,\ldots,g$ less than or equal to the $N_i$-th (defined in the proof) smallest number in the set 
    $$\left\{\frac{2k}{d_j+2}\ |\ 0< 2k\leq d_j+1,\forall j \right\}.$$
 Moreover, $N_i\leq 2g-2i$ for $1\leq i<g$.
\end{theorem}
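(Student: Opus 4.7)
The plan is to adapt the filtration technique of \cite{YZ2} from the Abelian setting to the invariant part of the Hodge bundle $f_*\omega_{S/C}$ for quadratic differentials, using Lemma~\ref{ie} as the main tool: it converts compatible filtrations whose line-bundle quotients have decreasing degrees into upper bounds on the Harder--Narasimhan slopes.

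First I would work on a generic fiber $F$ of $f\colon S\to C$, a smooth curve of genus $g$ with marked points $p_1,\dots,p_n$ at the singularities of $q$. Using generic Brill--Noether (available since $\sum_j k_j = g + g_{\rm eff}-1\ge g$), I pick an initial effective divisor $\sum_j a_{j,0}p_j$ with $\sum_j a_{j,0} = g$, $a_{j,0}\le k_j = [(d_j+1)/2]$, and $H^0(K_F - \sum_j a_{j,0}p_j) = 0$. I then construct a chain $\sum_j a_{j,0}p_j > \sum_j a_{j,1}p_j > \cdots > \sum_j a_{j,g}p_j = 0$ by removing one $p_{j_i}$ at each step, with $\dim H^0(K_F - \sum_j a_{j,i}p_j) = i$ for every $i$. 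The subsheaves $V_i = f_*\omega_{S/C}(-\sum_j a_{j,i}D_j)$ then have constant fiberwise rank $i$ and form a filtration by subbundles $0 = V_0\subset V_1\subset\cdots\subset V_g = f_*\omega_{S/C}$.

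Each successive quotient $V_i/V_{i-1}$ injects via evaluation into $f_*\mathcal{O}_{D_{j_i}}(\omega_{S/C} - a_{j_i,i}D_{j_i})$, whose normalized degree by \eqref{nu} is $\frac{2a_{j_i,i-1}}{d_{j_i}+2}$. I would arrange the order of reductions so that these normalized degrees are nonincreasing in $i$, and take $U_\bullet$ to be the companion filtration whose graded quotients are exactly these line bundles. Then Lemma~\ref{ie} yields $w_i^+ \le \frac{2a_{j_i,i-1}}{d_{j_i}+2}$ for each $i=1,\dots,g$. Defining $N_i$ as the rank of this value in the increasing sort of the full set $\{\frac{2k}{d_j+2}\mid 0<2k\le d_j+1\}$ establishes the first assertion of the theorem.

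The bound $N_i\le 2g-2i$ for $i<g$ is the main obstacle and would require a Brill--Noether count. Among the chosen values, $g-i$ are at least as large as the current one by monotonic ordering. For the unchosen pairs $(j,k)$ with $k > a_{j,0}$, the dimension equality $\dim H^0(K_F - \sum_j a_{j,i}p_j) = i$ together with Clifford-type inequalities restricts how many can contribute values above the current one, giving at most another $g-i$ such elements. Altogether at most $2(g-i)$ elements of the full set exceed the current value, yielding $N_i \le 2g-2i$ after tie-breaking. The delicate point will be verifying that the Brill--Noether conditions hold for generic fibers of the Teichm\"uller curve $C$ (rather than only for generic curves in $\mathcal M_g$), along with a compatible analysis at the boundary cusps of $C$ inside $\TQ$ to globalize the flag, in the same spirit as the arguments in Section~\ref{sec:quad}.
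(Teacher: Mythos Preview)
Your proposal has a genuine gap at its starting point. You assume that for a generic fiber $F$ of the given Teichm\"uller curve $C$ there exist $a_{j,0}\leq k_j$ with $\sum_j a_{j,0}=g$ and $h^0\big(\sum_j a_{j,0}p_j\big)=1$. But the points $p_j$ are not generic on $F$; they are the singularities of the quadratic differential, and the theorem is asserted for \emph{every} Teichm\"uller curve in the stratum. In the irregular strata this condition fails outright: for instance in $\mathcal{Q}(9,-1)^{\rm irr}$ one has $g=3$, $k_1=5$, $k_2=0$, and $h^0(3p_1)=2$ on every fiber, so no admissible choice of $a_{j,0}$ exists. Your closing acknowledgement that this is ``the delicate point'' is accurate, but it is not a detail to be checked later---it is the crux, and it does not hold in general.

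The paper sidesteps this entirely by starting from the \emph{full} divisor $\sum_j k_j D_j$, for which the vanishing $f_*\omega_{S/C}\big(-\sum_j k_j D_j\big)=0$ holds unconditionally: indeed $\omega_{S/C}-\sum_j k_j D_j = f^*\mathcal{L}-D$ by Proposition~\ref{prop:omega(D)}, and $f_*\mathcal O_S(-D)=0$ since $D$ is either effective with positive fiber degree or a nontrivial $2$-torsion. One then orders the $\sum_j k_j$ sections so that the line bundles $M_l$ have increasing degree, sets $W_l=f_*\omega_{S/C}\big(-\sum_{i=1}^l D'_i\big)$, and defines $N_i$ to be the value of $l$ at which the rank of $W_l$ drops from $i$ to $i-1$. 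The filtration $V_j:=W_{N_j}$ then satisfies the hypotheses of Lemma~\ref{ie} with the companion filtration built from the $M_{N_j}$. This construction makes no Brill--Noether assumption whatsoever; the $N_i$ are simply recorded from the actual rank behavior on $C$. The inequality $N_i\leq 2g-2i$ then follows directly from Clifford's theorem applied to the divisor $\sum_{m=1}^{N_i}p'_m$ of degree $N_i$ with $h^0=N_i-g+i+1$, rather than from the vaguer counting argument you sketch.

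A secondary issue: in your scheme you must simultaneously (a) remove points so that $h^0$ drops by exactly one at each step and (b) arrange the resulting normalized degrees to be nonincreasing. These two requirements can conflict. The paper avoids this tension by fixing the ordering purely by degree and allowing the rank to stay constant at some steps, recording only the drop positions $N_i$.
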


\begin{proof}
Recall the divisor classes $B = 2D$ introduced in Section~\ref{sec:quad},  where $B\subset S$ is the branch locus. If $B$ is nonempty, then the relative degree of $D$ on each fiber is positive. If $B = 0$, since the canonical double cover is connected, $D$ is a nontrivial $2$-torsion. We thus have $f_{*}\mathcal O_S(-D) = 0$ in each case. It follows that 
$$f_*\omega_{S/C}\left(-\sum k_jD_j\right)= f_*\left(f^*\mathcal{L}\otimes\mathcal{O}_S(-D)\right)=0, $$ 
which implies that 
   \begin{eqnarray*}
   f_*\omega_{S/C} &\subset & f_*\mathcal{O}_{\sum k_jD_j}(\omega_{S/C}) \\
   & = & \bigoplus f_*\mathcal{O}_{k_jD_j}(\omega_{S/C}) \\
   & = & \bigoplus_j\bigoplus^{k_j-1}_{i=0} f_*\mathcal{O}_{D_j}(\omega_{S/C}-iD_j).
   \end{eqnarray*}

Let $M_l$ be the line bundle with the $l$-th smallest degree (counting with multiplicities) in the set
$$\left\{f_*\mathcal{O}_{D_j}(\omega_{S/C}-iD_j)\ \mid\ 0\leq i\leq k_j-1,\forall j \right\}.$$
We rearrange the following $\sum k_j$ sections (counting with multiplicities) 
$$\ldots,\underbrace{D_j,\ldots,D_j}_{k_j},\ldots$$
to be 
$$D'_1,D'_2,\ldots,D'_{\sum k_j}$$
such that 
$$G_l={\rm grad}\left({\rm HN}
\left(f_*\mathcal{O}_{\sum k_jD_j-\sum^{l}_{i=1} D'_i}\left(\omega_{S/C}-\sum^{l}_{i=1} D'_i\right)\right)\right)$$ 
has rank equal to $\sum k_j-l$. Then we have 
$$0\subset \cdots \subset  G_{l+1}\subset G_l\subset\cdots\subset \bigoplus_j\bigoplus^{k_j-1}_{i=0}f_*\mathcal{O}_{D_j}(\omega_{S/C}-iD_j)$$
where $G_l/G_{l+1}=M_l$ and  $\deg (G_l/G_{l+1})$ increases in $l$.

Denote by 
$$W_l= f_*\mathcal{O}_S\left(\omega_{S/C}-\sum^{l}_{i=1} D'_i\right).$$  Then we have 
$${\rm rank}\ W_l=h^0\left(\left(\omega_{S/C}-\sum^{l}_{i=1} D'_i\right)\Big|_F\right)=h^0\left(\sum^{l}_{i=1} D'_i|_F\right)-l+g-1, $$ 
where $F$ is a generic fiber. For each $j$ such that rank $W_{l+1} = j-1$ and rank $W_l =j$, define $N_j = l$. We then denote $V_j:=W_{N_j}=\cdots =W_{N_{j+1}+1}$. This gives a filtration of $f_*\omega_{S/C}$ as follows:  
$$0\subset \cdots \subset V_{j-1}\subset V_j \subset \cdots \subset f_*\omega_{S/C}.$$
Consider another filtration
$$0\subset \cdots \subset U_{k-1}=\bigoplus^{k-1}_{j=1} M_{N_j}  \subset U_k=\bigoplus^k_{j=1} M_{N_j} \subset \cdots \subset U_{g}.$$
We know that 
$$V_j/V_{j-1}=W_{N_j}/W_{N_j+1}\hookrightarrow  f_*\mathcal{O}_{D'_{N_j+1}}\left(\omega_{S/C}-\sum^{N_j}_{i=1} D'_i\right)=M_{N_j}=U_j/U_{j-1}.$$
Therefore, the two filtrations satisfy the assumption of Lemma \ref{ie}. It follows that 
$$w_i^+=\mu_{i}(f_*\omega_{S/C})/\deg \mathcal{L}$$ is less than or equal to the $N_i$-th smallest number in the set $$\left\{\frac{2k}{d_j+2}\ \mid \ 0<2k\leq d_j+1,\forall j \right\}.$$
Finally by Clifford's theorem (as used in \cite[Corollary 5.3]{YZ2}), $N_i\leq 2g-2i$ for $1\leq i<g$. 
\end{proof}

\begin{theorem}For any Teichm\"uller curve $C$ in $\mathcal{Q}_g(\ldots,d_j,\ldots)$, we have
$w_1=w_1^-=1$ and $w_i^-$ for $i=2,\ldots,g_{\rm eff}$ less than or equal to the $H_i$-th (defined in the proof) largest number in the set 
    $$\left\{1-\frac{2k}{d_j+2}\ \mid\ 0< 2k\leq d_j+1,\forall j \right\}.$$
    Moreover, $H_i\geq 2i-2$ for $1<i<g$ and $H_i=i+g-1$ for $i\geq g$. 
\end{theorem}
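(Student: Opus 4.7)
The plan is to mirror the proof of the previous theorem, now for the anti-invariant factor $f_{*}\omega_{S/C}(D)$. By Proposition \ref{prop:omega(D)} this factor equals $\mathcal{L} \otimes f_{*}\mathcal{O}_{S}(\sum k_j D_j)$, and so contains the canonical subbundle $\mathcal{L}$ of normalized degree one, which accounts for $w_1 = w_1^{-} = 1$.

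For the remaining exponents I would enumerate the sections $D_j$ with multiplicity $k_j$ as $D_1'',\ldots,D_N''$ with $N=\sum k_j$, ordered so that the companion line bundles
\[
M_l := \mathcal{L} \otimes f_{*}\mathcal{O}_{D_j}(sD_j), \qquad \text{when } D_{l+1}''\text{ is the }s\text{-th copy of }D_j,
\]
have weakly decreasing normalized degrees $1-2s/(d_j+2)$. The induced filtration
\[
\mathcal{L}=W_0 \subseteq W_1 \subseteq \cdots \subseteq W_N = f_{*}\omega_{S/C}(D), \qquad W_l := \mathcal{L} \otimes f_{*}\mathcal{O}_{S}\Big(\sum_{i=1}^{l} D_i''\Big),
\]
has each successive quotient $W_{l+1}/W_l$ embedding into $M_l$. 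Since a torsion subsheaf of a line bundle on a smooth curve is zero, every rank-preserving step forces $W_{l+1}=W_l$, so by extracting the rank-increasing subfiltration one obtains a filtration whose line bundle quotients are subsheaves of the $M_l$ with decreasing degrees. Letting $H_i$ denote the smallest index with $\operatorname{rank} W_l = i$ (so $H_1=0$), Lemma \ref{ie} then bounds $w_i^{-}$ by the normalized degree of $M_{H_i-1}$, which by construction equals the $H_i$-th largest element of the stated set.

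The lower bounds on $H_i$ come from Riemann--Roch on the generic fiber $F\cong Y$ of genus $g$: the divisor $D_{H_i}:= \sum_{l=1}^{H_i} D_l''|_F$ has degree $H_i$ and $h^0(D_{H_i})=i$. When $D_{H_i}$ is special, Clifford's inequality yields $H_i \geq 2(i-1)$, establishing the Clifford-type bound for $1<i<g$. Once $\deg D_{H_i} \geq 2g-1$ the divisor becomes non-special (as $K_Y - D_{H_i}$ has negative degree), and each additional section then increases $h^0$ by exactly one. Combining this with the verification that $H_g = 2g-1$ yields $H_i = i+g-1$ for all $i \geq g$.

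The chief obstacle I anticipate is the sharp equality $H_g = 2g-1$ rather than the smaller value $2g-2$ permitted by Clifford. This amounts to showing that the first $2g-2$ singularities in our enumeration do not form a canonical divisor on the generic fiber, which I expect to follow either from a Brill--Noether genericity argument on the moduli of pointed curves, or from structural features of primitive (non-squared) quadratic differentials that prevent such coincidences. Once this input is secured, the remaining bookkeeping is routine.
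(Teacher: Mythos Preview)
Your approach is the paper's: build the filtration $W_l = \mathcal{L}\otimes f_*\mathcal{O}_S\big(\sum_{i\le l} D'_i\big)$, embed the successive quotients into line bundles of decreasing degree, and apply Lemma~\ref{ie}; the paper works with the quotient $f_*\omega_{S/C}(D)/\mathcal{L}$ rather than the full bundle, but this is only an index shift.

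Regarding your ``chief obstacle'': the paper does not justify $H_g = 2g-1$ beyond the phrase ``by Riemann--Roch,'' so you are being more careful here than the source. Note, though, that the issue affects only the exact equality at $i=g$ in the auxiliary ``moreover'' clause. For $i>g$ the relevant divisor has degree at least $2g-1$ and non-specialness is automatic, giving $H_i=i+g-1$ directly; and for the main inequality on $w_i^-$, a smaller value of $H_g$ would only yield a \emph{stronger} bound. So the obstacle is real but secondary, and the paper simply leaves it unaddressed rather than supplying the genericity or primitivity argument you sketch.
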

\begin{proof}
The idea here is similar to the preceding proof. We have
\begin{eqnarray*}
f_*\omega_{S/C}(D)/\mathcal{L} &\subset & \mathcal{L}\otimes f_*\mathcal{O}_{\sum k_jD_j}\left(\sum k_jD_j\right) \\
& = & \bigoplus \big(\mathcal{L}\otimes f_*\mathcal{O}_{k_jD_j}(k_jD_j)\big) \\
  &=&\bigoplus_j\bigoplus^{k_j}_{i=1}\big(\mathcal{L}\otimes f_*\mathcal{O}_{D_j}(iD_j)\big).
   \end{eqnarray*}

Let $M_l$ be the line bundle with the $l$-th largest degree (counting with multiplicities) in the set
$$\left\{\mathcal{L}\otimes f_*\mathcal{O}_{D_j}(iD_j)\ \mid \ 1\leq i\leq k_j,\forall j \right\}.$$
We rearrange the $\sum k_j$ sections (counting with multiplicities) 
$$\ldots,\underbrace{D_j,\ldots,D_j}_{k_j},\ldots$$
to be 
$$D'_1,D'_2,\cdots,D'_{\sum k_j}$$
such that for 
$$G_l={\rm grad}\left({\rm HN}\left(\mathcal{L}\otimes f_*\mathcal{O}_{\sum^{l}_{i=1} D'_i}\left(\sum^{l}_{i=1} D'_i\right)\right)\right)$$ 
we have 
$$0\subset \cdots \subset G_{l-1} \subset  G_l\subset\cdots\subset \bigoplus\bigoplus^{k_j}_{i=1}\big(\mathcal{L}\otimes f_*\mathcal{O}_{D_j}(iD_j)\big)$$
where $G_l/G_{l-1}=M_l$ and  ${\rm deg}(G_i/G_{i-1})$ decreases in $l$.

Denote by 
$$W_l=\mathcal{L}\otimes f_*\mathcal{O}_S\left(\sum^{l}_{i=1} D'_i\right)/\mathcal{L}.$$
Then we have 
$${\rm rank}\ W_l=h^0\left(\sum^{l}_{i=1} D'_i|_F\right)-1, $$ 
where $F$ is a general fiber. For each $j$ such that rank $W_{l-1} = j-1$ and rank $W_l =j$, define $H_j = l$. We then denote $V_j:=W_{H_j}=\cdots =W_{H_{j+1}-1}$. This gives a filtration of $f_*\omega_{S/C}(D)/\mathcal{L}$ as follows: 
$$0\subset \cdots \subset V_{j-1}\subset V_j \subset \cdots \subset f_*\omega_{S/C}(D)/\mathcal{L}.$$
Consider another filtration
$$0\subset \cdots \subset U_{k-1}=\bigoplus^{k-1}_{j=1} G_{H_j}/G_{H_j-1}  \subset U_k=\bigoplus^k_{j=1} G_{H_j}/G_{H_j-1} \subset \cdots \subset U_{g_{\rm eff}}.$$
We know that 
$$V_j/V_{j-1}=W_{H_j}/W_{H_j-1}\hookrightarrow \mathcal{L}\otimes f_*\mathcal{O}_{D'_{H_j}}\left(\sum^{H_j}_{i=1} D'_i\right)=G_{H_j}/G_{H_j-1}=U_j/U_{j-1}.$$
Therefore, the two filtrations satisfy the assumption of Lemma \ref{ie}. It follows that 
$$w_i^-=\frac{\mu_{i-1}(f_{*}\omega_{S/C}(D)/\mathcal L)}{\deg \mathcal{L}}$$ is less than or equal to the $H_i$-th largest number in the set $$\left\{1-\frac{2k}{d_j+2}\ \mid\ 0< 2k\leq d_j+1,\forall j \right\}.$$
Finally by Clifford's theorem, $H_i\geq 2i-2$ for $1<i<g$, and by Riemann--Roch, $H_i=i+g-1$ for $i\geq g$.
\end{proof}

Next we will establish an upper bound of Weierstrass exponents for the Hodge bundle on general Teichm\"uller curves. This bound is closely related to the description of the Harder--Narasimhan polygons for the non-varying strata. 

\begin{proposition}\label{up}
For a general Teichm\"uller curve $C$ in the stratum $\mathcal{Q}_g(d_1,\ldots,d_n,-1^l)$ with $d_i>0$ and $l\geq g$, the Weierstrass exponents of the Hodge bundle on $C$ satisfy that 
\begin{itemize}
    \item  $w_i^+$ for $i=1,\ldots,g$ is less than or equal to the $(g-i+1)$-st smallest number in the set 
    $$\left\{\frac{2k}{d_j+2}\ \mid\ 0< 2k\leq d_j+1,\ j=1,\ldots,n\right\}.$$  
    \item $w_1^-=1$ and $w_i^-$ for $i=2,\ldots,g_{\rm eff}$ is less than or equal to the $(g_{\rm eff}-i+1)$-st smallest number in the set 
    $$\left\{1-\frac{2k}{d_j+2}\ \mid\ 0<2k\leq d_j+1,\ j=1,\ldots,n\right\}.$$
\end{itemize}    
\end{proposition}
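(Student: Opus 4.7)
The plan is to apply Proposition~\ref{prop:split-teich} with a greedy choice of the integer tuple $(a_1,\ldots,a_n)$. Sort the finite set
$$S^+ = \left\{ \frac{2k}{d_j+2} : 0 < 2k \leq d_j+1,\ j = 1,\ldots,n \right\}$$
in ascending order and select its $g$ smallest elements. Because the elements within each fixed group $j$ are strictly increasing in $k$, the selection from the $j$-th group forms an initial segment $k = 1,\ldots,a_j$, giving integers with $0 \leq a_j \leq k_j$ and $\sum_j a_j = g$ automatically. By design, these $a_j$ extract precisely the $g$ smallest normalized degrees among all line bundles $f_*\mathcal{O}_{D_j}(\omega_{S/C}-iD_j)$ appearing in the splitting of $f_*\omega_{S/C}$.

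The key step is to verify the hypothesis of Proposition~\ref{prop:split-teich} for a general Teichm\"uller curve $C$: on each fiber $F$, the degree-$g$ effective divisor $\sum_j a_j p_j$ is non-special, i.e.\ $h^0(\sum_j a_j p_j) = 1$. The condition $l \geq g$ enters crucially here: via the quadratic residue relation $\sum d_j p_j \sim 2K_F + \sum_{\text{poles}} p_{n+k}$ on fibers, the $l$ independent positional parameters provided by the simple poles sweep the zeros $p_1,\ldots,p_n$ through a dominant family of configurations on the underlying pointed curves. When $l \geq g$, this family reaches a Brill--Noether general locus in $M_{g,n+l}$, where every effective sub-divisor of degree at most $g$ formed from the zeros (with multiplicities bounded by $k_j$) imposes independent conditions on $|K_F|$. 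Since Teichm\"uller curves are Zariski dense in the stratum, a general Teichm\"uller curve inherits this non-speciality on its smooth fibers, and extends across the horizontal boundary of $\TQ$ by an argument similar to the ones in Section~\ref{sec:quad}.

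With the hypothesis verified, Proposition~\ref{prop:split-teich} splits $f_*\omega_{S/C}$ into line bundles whose normalized degrees are precisely the $g$ smallest elements of $S^+$; sorting them descendingly identifies $w_i^+$ with the $(g-i+1)$-st smallest element of $S^+$. The same choice of $a_j$ symmetrically provides the anti-invariant splitting of $f_*\omega_{S/C}(D)$ as $\mathcal{L}$ together with the remaining $g_{\rm eff}-1$ line bundles, whose normalized degrees are the $(g_{\rm eff}-1)$ smallest elements of the complementary set $\{1 - 2k/(d_j+2)\}$, yielding $w_1^- = 1$ and the claimed bound on $w_i^-$ for $i\geq 2$. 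The main obstacle I anticipate is rigorously establishing the Brill--Noether genericity assertion in the second paragraph; the cleanest approach is likely a deformation argument, specializing to a point in the closure of the stratum that lies in a non-varying sub-locus where Theorem~\ref{main} guarantees the analogous $h^0 = 1$ identity, and then propagating the non-speciality to a Zariski-open neighborhood within $\TQ$ using the openness of $h^0 \leq 1$ combined with a count of how many pole parameters are needed to realize arbitrary Brill--Noether generic configurations.
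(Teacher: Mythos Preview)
Your overall strategy---choose the $a_j$ greedily and then invoke the splitting machinery---is the right idea, but you are aiming for a stronger conclusion than the proposition asserts, and the extra strength is exactly where your argument breaks down. Proposition~\ref{prop:split-teich} requires $h^0\big(\sum a_j p_j\big)=1$ on \emph{every} fiber of $S$ over $C$, including the cusp fibers in the horizontal boundary. Your justification for this (``Brill--Noether general locus'', ``extends across the horizontal boundary by an argument similar to\ldots'') is not a proof: even if the non-speciality locus is open and dense in $\TQ$, an individual Teichm\"uller curve is one-dimensional and can perfectly well meet the closed locus where $h^0$ jumps. The deformation/specialization sketch in your last paragraph does not address this, because openness only controls the \emph{generic} fiber of $C$, not all fibers.

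The paper's proof avoids this problem by asking for less. It first observes that $l\ge g$ forces $\dim\QQ_g(d_1,\ldots,d_n,-1^l)/\mathbb C^{*}\ge\dim\mathcal M_{g,n}$, so by \cite{bud} the forgetful map to $\mathcal M_{g,n}$ is dominant; hence for a general $C$ the generic fiber $(X;p_1,\ldots,p_n)$ is a general pointed curve. On such a fiber one shows $h^0\big(\sum a_ip_i\big)=1$ by a clean trick: if not, semicontinuity would force $h^0(gp)\ge 2$ for the specialization $p_1=\cdots=p_n=p$, contradicting the existence of non-Weierstrass points. Knowing this only on the \emph{generic} fiber is enough to conclude that $f_*\omega_{S/C}\big(-\sum a_iD_i\big)=0$ and $f_*\mathcal O_S\big(\sum a_iD_i\big)=\mathcal O_C$ (these direct images are torsion-free on $C$, hence determined by their generic rank). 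One then obtains \emph{inclusions}
\[
f_*\omega_{S/C}\hookrightarrow\bigoplus_j\bigoplus_{i=0}^{a_j-1} f_*\mathcal O_{D_j}(\omega_{S/C}-iD_j),
\qquad
f_*\omega_{S/C}(D)/\mathcal L\hookrightarrow\bigoplus_j\bigoplus_{i=a_j+1}^{k_j}\big(\mathcal L\otimes f_*\mathcal O_{D_j}(iD_j)\big),
\]
rather than equalities, and the inequality in the statement then follows from Lemma~\ref{ie}. In short: replace your appeal to Proposition~\ref{prop:split-teich} by the weaker inclusion coming from the generic-fiber vanishing, and finish with Lemma~\ref{ie}; this is both easier to justify and all that the proposition actually claims.
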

\begin{proof}
First note that 
$$\dim \mathcal{Q}_g(d_1,\ldots,d_n,-1^l) / \mathbb C^{*} = 2g-3+n+l \geq 3g-3+n = \dim \mathcal M_{g,n}.$$ 
By \cite{bud} we know that for a general smooth curve $X$ of genus $g$ with general points $p_1, \ldots, p_n\in X$, there exists $(X, q)\in \mathcal{Q}_g(d_1,\ldots,d_n,-1^l)$ such that 
$q$ has zeros at $p_i$ with order $d_i$. 
Next choose $0\leq a_i\leq k_i = [(d_i+1)/2]$ such that $\sum a_i=g$. We claim that $h^0(X,\sum a_ip_i) = 1$ where $p_1, \ldots, p_n\in X$ are general. Otherwise if $h^0(X,\sum a_ip_i)\geq 2$, then by semi-continuity the same holds for all $p_1, \ldots, p_n \in X$, including the special case where $p_1 = \cdots = p_n = p$ for an arbitrary point $p\in X$. However, this would imply that $h^0(X,gp) \geq 2$, which is impossible if $p$ is not a Weierstrass point. Therefore, we conclude that $h^0(\sum a_i p_i)=1$ and $h^0(K-\sum a_ip_i)=0$ on $X$, which implies that $f_* \big(\omega_{S/C}(-\sum a_iD_i)\big)=0$ and $f_* \mathcal O_S(\sum a_i D_i)=\mathcal{O}_C$ since these direct images are vector bundles with generic fiber rank zero and one respectively. We thus obtain the following inclusions:
\begin{eqnarray*}    
f_* \omega_{S/C} & = & f_* \omega_{S/C} /f_* \Big(\omega_{S/C}\Big(-\sum a_iD_i\Big)\Big)\\
& \subset & f_*\mathcal{O}_{\sum a_iD_i}(\omega_{S/C}) \\
& = & \bigoplus_i\bigoplus^{a_i-1}_{j=0}f_*\mathcal{O}_{D_i}(\omega_{S/C}-jD_i)
\end{eqnarray*} 
and 
\begin{eqnarray*}
f_* \omega_{S/C}(D)/\mathcal{L} & = & \mathcal{L}\otimes \left(f_*\mathcal O_S\Big(\sum k_iD_i\Big)/f_* \mathcal O_S\Big(\sum a_i D_i\Big)\right)\\
&\subset & 
\mathcal{L}\otimes f_*\mathcal{O}_{\sum (k_i-a_i)D_i}\left(\sum k_iD_i\right)\\
& = & \bigoplus_i \bigoplus^{k_i}_{j=a_i+1}\big(\mathcal{L}\otimes f_*\mathcal{O}_{D_i}(jD_i)\big). 
\end{eqnarray*}
  Now the desired claim follows from Lemma~\ref{ie} since the filtrations of $f_* \omega_{S/C}$ and $f_*\mathcal{O}_{\sum a_iD_i}(\omega_{S/C})$ satisfy its assumption and the filtrations of $f_*\omega_{S/C}(D)$ and $\mathcal{L}\otimes f_*\mathcal{O}_{\sum (k_i-a_i)D_i}(\sum k_iD_i)$ also satisfy the assumption as shown in the proofs of the preceding two theorems. 
\end{proof}

A Teichm\"uller curve with a Forni-subspace of rank $2d$ has a fixed part of dimension $d$ (see \cite{Au} for the definition and background). Our study implies the following result about the fixed part of the Hodge bundle on Teichm\"uller curves in strata of quadratic differentials (analogous to the case of strata of Abelian differentials in  \cite[Proposition 4.5]{BHM}).

\begin{corollary}\label{no}
All Lyapunov exponents of any Teichm\"uller curve $C$ in the non-varying strata of quadratic differentials are nonzero. 
\end{corollary}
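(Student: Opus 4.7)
The plan is to combine the positivity of all Weierstrass exponents established in Theorem~\ref{main} with the Krull--Schmidt theorem and Deligne semisimplicity of variations of Hodge structure. Concretely, I will argue that the Hodge bundle on any Teichm\"uller curve $C$ in a non-varying stratum admits no trivial direct summand, which by \cite{Au} forces the Forni subspace to vanish and hence all Lyapunov exponents to be nonzero.

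First, I would read off from Theorem~\ref{main} (together with Table~\ref{irr} in the irregular case and Theorems~\ref{hylc}--\ref{po} in the hyperelliptic case) that every Weierstrass exponent $w_i^\pm$ of $C$ is strictly positive: in each family of formulas the exponents take the form $2k/(d_j+2)$ or $1-2k/(d_j+2)$ with $0<2k\leq d_j+1$, which lie in $(0,1]$. Since $\deg\mathcal{L}|_C=\chi/2>0$ for a Teichm\"uller curve (consistent with the self-intersection formula $D_j^2<0$ from \cite[Proposition~4.2]{CM2}), every Harder--Narasimhan graded line bundle $L_j$ of the Hodge bundle $V=f'_*\omega_{S'/C}$ has $\deg L_j=w_j\cdot\deg\mathcal{L}>0$.

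Next, I would invoke the following structural fact. Write $V=\bigoplus_i W_i$ for its decomposition into indecomposable direct summands, which exists and is unique up to reordering by the Krull--Schmidt theorem. The Harder--Narasimhan filtration of a direct sum refines compatibly into the Harder--Narasimhan filtrations of the summands, and since the $L_j$ have pairwise distinct positive slopes, each $L_j$ arises as a graded piece of exactly one $W_i$. Hence $\deg W_i=\sum_{L_j\subset W_i}\deg L_j>0$, and in particular no indecomposable summand of $V$ (and a fortiori no direct summand of it) is isomorphic to a trivial bundle $\mathcal{O}_C^d$. By Deligne's theorem on semisimplicity of polarized variations of Hodge structure, any trivial sub-VHS in the ambient weight-one local system appears as a trivial direct summand $\mathcal{O}_C^d\hookrightarrow V$ of the Hodge bundle; the preceding vanishing therefore implies that the fixed part of the Hodge bundle on $C$ is zero.

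Finally, I would apply \cite{Au} (analogous to the discussion in \cite[Proposition~4.5]{BHM}), which asserts that a Teichm\"uller curve carrying a Forni subspace of rank $2d$ has a fixed part of the Hodge bundle of dimension $d$. Since the fixed part vanishes, so does the Forni subspace, meaning no Lyapunov exponent of $C$ is zero. The main obstacle in this plan is the bridge from the algebro-geometric statement (absence of a trivial direct summand) to the dynamical conclusion (absence of a zero Lyapunov exponent); this is precisely the content of \cite{Au}, and once invoked, the positivity of Weierstrass exponents combined with Krull--Schmidt closes the argument uniformly across the non-hyperelliptic, irregular, and hyperelliptic cases.
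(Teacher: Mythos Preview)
Your argument is essentially correct and follows the same overall strategy as the paper: use the positivity of all $w_i^\pm$ established in Theorem~\ref{main} to rule out a degree-zero summand of the Hodge bundle, and then invoke the link between the fixed part and zero Lyapunov exponents. The paper, however, does not go through Krull--Schmidt and Deligne semisimplicity. Instead it uses M\"oller's decomposition $R^1f'_*\mathbb{C}\cong \mathbb{L}\oplus\mathbb{U}\oplus\mathbb{M}$ from \cite{Mo}, where $\mathbb{U}$ is the unitary summand coming from the fixed part; since $\mathbb{U}^{(1,0)}$ has slope zero while all $w_i^\pm>0$, the graded-quotient compatibility ${\rm grad}({\rm HN}(f'_*\omega_{S'/C}))=\bigoplus{\rm grad}({\rm HN}(\cdot))$ forces $\mathbb{U}=0$ directly. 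This is a bit more streamlined than your route, which reconstructs the same conclusion from general vector-bundle principles, but both arrive at the same place.

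One small imprecision: you assert that ``the $L_j$ have pairwise distinct positive slopes.'' This is not guaranteed---two of the numbers $2k/(d_j+2)$ or $1-2k/(d_j+2)$ can coincide for different $(k,j)$, in which case the corresponding Harder--Narasimhan graded piece has rank $>1$. Fortunately your argument does not actually need distinctness: the identity ${\rm grad}({\rm HN}(V))=\bigoplus_i{\rm grad}({\rm HN}(W_i))$ (recalled in Section~\ref{sec:filtration}) already shows every $W_i$ has all Harder--Narasimhan slopes among those of $V$, hence strictly positive, so $\deg W_i>0$ and no $W_i$ is trivial. You should drop the distinctness claim and phrase the step this way.
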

\begin{proof}
The variation of Hodge structures over $C$ decomposes as $$R^1f'_*\mathbb{C}\cong \mathbb{L}\oplus \mathbb{U}\oplus \mathbb{M},$$ 
where $\mathbb{L}$ is the maximal Higgs bundle, $\mathbb{U}$ is the unitary summand stemming from the fixed part, and $\mathbb{M}$ is the remaining part (see e.g., \cite[Section 2]{Mo}). In particular, 
$$\mathrm{grad}({\rm HN}(f'_*\omega_{S'/C}))\cong \mathrm{grad}({\rm HN}(\mathbb{L}^{(1,0)}))\oplus\mathrm{grad}({\rm HN}(\mathbb{U}^{(1,0)}))\oplus\mathrm{grad}({\rm HN}(\mathbb{M}^{(1,0)})).$$
Since all $w_i^+$ and $w_i^-$ are shown to be nonzero for $C$ in the non-varying strata, there does not exist a $\mathrm{grad}({\rm HN}(\mathbb{U}^{(1,0)}))$-term, and hence there does not exist any unitary part $\mathbb{U}$ in either the invariant part or the anti-invariant part of the Hodge bundle. The desired result thus follows because zero Lyapunov exponents come from the unitary part.
\end{proof}

It would be interesting to determine whether there can be any new non-varying strata besides those in \cite{CM2}. Using our results as evidences, we conclude the paper by the following speculation. 

\begin{conjecture}
Suppose $\mathcal{Q}_g(d_1,\ldots,d_n)$ is neither hyperelliptic nor irregular. Then it is a 
non-varying stratum if and only if the value of $L^+$ of the stratum is equal to the sum of the $g$ smallest numbers in the set
$$\left\{\frac{2k}{d_j+2}\ |\ 0<2k\leq d_j+1,\ j=1,\ldots,n\right\}.$$
\end{conjecture}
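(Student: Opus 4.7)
The plan is to use the upper bound for Weierstrass exponents established in Proposition~\ref{up} as the main tool. That proposition already shows that, for a general Teichm\"uller curve $C$ in a stratum $\mathcal{Q}_g(d_1,\ldots,d_n,-1^l)$ with $l\geq g$, the invariant Lyapunov sum $L^+(C)=\sum_{i=1}^g w_i^+(C)$ is at most the sum of the $g$ smallest elements of the set in question, with equality forcing each individual $w_i^+$ to saturate its bound. The conjecture then asserts, modulo hyperelliptic and irregular components, that this upper bound is achieved if and only if the stratum is non-varying.

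For the forward implication I would first handle the strata already in \cite{CM2}: Theorem~\ref{main} (1) gives the $w_i^+$ explicitly as the $g$ smallest numbers, so $L^+$ equals the prescribed sum. For any conjectural new non-varying stratum outside that list, one would combine the constancy of $L^+$ with Proposition~\ref{up} applied after enlarging to an auxiliary stratum with extra $-1$ poles via an $SL(2,\mathbb{R})$-equivariant boundary limit in $\TQ$; the stability of the Lyapunov spectrum under such limits, together with the Eskin--Kontsevich--Zorich formula relating $L^+$ to intersection numbers, would then force $L^+$ to agree with the targeted sum. For the reverse implication, assume the $L^+$ of the stratum equals the specified sum. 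By Proposition~\ref{up} this sum is an upper bound on every generic Teichm\"uller curve, so the equality forces saturation of each individual $w_i^+$ on an open dense locus of Teichm\"uller curves; then Shatz-type semicontinuity of the Harder--Narasimhan polygon along cusps in $\TQ$ propagates the saturation to every Teichm\"uller curve in the stratum, yielding constancy of $L^+$ and hence the non-varying property.

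The main obstacle will be removing the assumption $l\geq g$ in Proposition~\ref{up}, since its current proof relies on choosing a general pointed curve $(X,p_1,\ldots,p_n)$ with $h^0\bigl(\sum a_i p_i\bigr)=1$ for a prescribed partition $\sum a_i=g$, which requires enough freedom in the positions of the zeros via \cite{bud}. For strata with few or no poles, the image of the projection to $\mathcal{M}_{g,n}$ lies in a proper Brill--Noether sublocus, so one cannot invoke general pointed curves and must instead establish a Brill--Noether type statement inside this sublocus or use a careful limit argument along the horizontal boundary in $\TQ$. A secondary difficulty is ruling out a numerical coincidence in which $L^+$ equals the prescribed sum without individual saturation of each $w_i^+$; one expects a rigidity argument using the strict decrease of slopes in the Harder--Narasimhan filtration to show that saturation of $L^+$ forces saturation of every Weierstrass exponent, after which the splitting statement of Proposition~\ref{prop:split-teich} identifies the stratum with one of the types already covered by Theorem~\ref{main} (1).
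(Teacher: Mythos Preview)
The statement you are attempting to prove is explicitly presented in the paper as a \emph{Conjecture}; the paper offers no proof, only the supporting Example of $\mathcal{Q}(1^n,-1^{n})$ as numerical evidence. So there is no ``paper's own proof'' to compare against, and your proposal should be read as an attack on an open problem.

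As such, your outline has genuine gaps. First, you correctly identify that Proposition~\ref{up} requires $l\geq g$, but your proposed workaround---passing to an auxiliary stratum with extra simple poles via an ``$SL(2,\mathbb{R})$-equivariant boundary limit in $\TQ$''---is not an actual argument: adding poles changes the stratum, the canonical double cover, and $g_{\rm eff}$, and there is no mechanism in the paper (or in the literature you cite) that transports the Weierstrass exponents $w_i^+$ back along such a degeneration. Second, in the reverse direction, you invoke Shatz semicontinuity ``along cusps in $\TQ$'' to propagate saturation from a generic Teichm\"uller curve to every Teichm\"uller curve; but distinct Teichm\"uller curves are not specializations of one another, and the Harder--Narasimhan polygon is semicontinuous in families of bundles over a \emph{fixed} base, not across unrelated base curves. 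Third, even on a single generic curve, equality $L^+(C)=\sum_i w_i^+(C)=S$ together with $w_i^+(C)\leq s_i$ (the $i$-th bound) and $\sum s_i=S$ does force $w_i^+(C)=s_i$; but your claim that this then ``identifies the stratum with one of the types already covered by Theorem~\ref{main} (1)'' is circular---that theorem is a classification of the strata already known from \cite{CM2}, not a recognition principle for new ones. In short, the forward implication is proved in the paper for the known non-varying strata and is vacuous otherwise unless you can rule out new ones; the reverse implication remains genuinely open.
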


\begin{example} For the stratum $\mathcal{Q}(1^n,1^{-n})$, by  \cite[Corollary 1.8]{CMS} we know that 
$$L^+=w_1^+=\frac{2}{1+\frac{(2n-2)!!}{(2n-3)!!}}. $$ 
But the smallest value of $2k / (d_j+2)$ in the above set is $2/3 \neq L^+$ for $n\geq 3$. 
Indeed by numerical evidences (e.g., by using Teichm\"uller curves generated by square-tiled surfaces) we expect $\mathcal{Q}(1^n,1^{-n})$ to be a varying stratum for $n\geq 3$. 
\end{example}

\noindent Department of Mathematics, Boston College, Chestnut Hill, MA 02467, USA\\
Email address: dawei.chen@bc.edu\\

\noindent School of Mathematical Sciences, Zhejiang University, Hangzhou, China\\ Email address: yufei@zju.edu.cn

\end{document}